\DeclareSymbolFont{cyrletters}{OT2}{wncyr}{m}{n}
\DeclareMathSymbol{\Sha}{\mathalpha}{cyrletters}{"58}
\theoremstyle{plain}    
 \newtheorem{thm}{Th\'eor\`eme}[section]
 \newtheorem{defn}[thm]{D\'efinition}
 \numberwithin{equation}{section} 
 \numberwithin{figure}{section} 
 \theoremstyle{plain}
 \theoremstyle{plain}
 \newtheorem{prop}[thm]{Proposition}
\newtheorem{corollaire}[thm]{Corollaire}
 \newtheorem{lemme}[thm]{Lemme}
 \def\Gal{\operatorname{Gal}}
 \def\Cap{\operatorname{Cap}}
 \def\mod{\operatorname{mod}}
 \def\res{\operatorname{res}}
 \def\coker{\operatorname{coker}}
 \def\Hom{\operatorname{Hom}}
 \def\supp{\operatorname{supp}}
  \def\rg{\operatorname{rg}}
   \def\Q{\mathbb Q}
\def\Z{\mathbb Z}       \def\O{\mathcal O} \def\Q{\mathbb Q} \def\F{\mathbb F} \def\R{\mathbb R}    \def\L{\mathcal L}
\def\G{\mathcal G}   
\def\WK{ WK^{\acute{e}t}}
\def\K{ K^{\acute{e}t}}
\def\scd{scd}
\def\Ver{Ver} \def\res{res} \def\cor{cor} 
\begin{document}
\title{Sur la pro-$p$-extension localement cyclotomique maximale d'un corps de nombres}
\author{Romain Validire}
\maketitle
\textbf{R\'esum\'e.} Soit $p$ un nombre premier et $F_{\infty}$ la $\Z_p$-extension cyclotomique d'un corps de nombres $F$. Nous \'etudions le groupe de Galois $\mathcal G_{F_{\infty}}'$ sur $F_{\infty}$ de la pro-$p$-extension non ramifi\'ee, $p$-d\'ecompos\'ee maximale de $F_{\infty}$. La question de la pro-$p$-libert\'e de $\mathcal G_{F_{\infty}}'$ a d\'ej\`a \'et\'e \'evoqu\'e par de nombreux auteurs. Dans cet article, nous caract\'erisons la pro-$p$-libert\'e de $\mathcal G_{F_{\infty}}'$ en termes de descente galoisienne pour certains noyaux de localisation en cohomologie galoisienne : \textit{les noyaux sauvages \'etales}. Nous en d\'eduisons des crit\`eres effectifs pour la non pro-$p$-libert\'e de ce groupe. 

\textbf{Abstract.} Let $p$ be a prime number and $F_{\infty}$ be the cyclotomic $\Z_p$-extension of a number field $F$. We consider the Galois group $\mathcal G_{F_{\infty}}'$ over $F_{\infty}$ of the maximal unramified, $p$-decomposed, pro-$p$-extension of $F_{\infty}$. The question whether $\mathcal G_{F_{\infty}}'$ is free pro-$p$ was already asked by many authors. In this article, we highlight a link between the freeness of $\mathcal G_{F_{\infty}}'$ and the Galois descent for some localisation kernels : \textit{the \'etale wild kernels}. Then we give explicit criterions to show that $\mathcal G_{F_{\infty}}'$ is not a free pro-$p$-group.

\section*{Introduction}
Soient $p$ un nombre premier et $F$ une extension alg\'ebrique du corps des rationnels $\Q$. On d\'esigne par $\L_F$ la pro-$p$-extension non ramifi\'ee maximale de $F$ et on note $\G_F=\Gal(\L_F/F)$ son groupe de Galois sur $F$.\\
M\^eme dans le cas o\`u $F$ est un corps de nombres (i.e. $F/\Q$ est fini), la structure du pro-$p$-groupe $\G_F$ n'est pas bien connue. La th\'eorie du corps de classes nous montre que $\G_F^{ab}$ est isomorphe \`a la $p$-partie du groupe des classes de $F$ (il s'agit donc d'un groupe fini). En 1964, Golod et Shafarevitch ont donn\'e les premiers exemples de corps $F$ pour lesquels $\G_F$ est infini. De nombreuses questions concernant la structure de ces groupes restent cependant ouvertes (dimension cohomologique, analyticité $p$-adique,...). \\
Soit $F_{\infty}$ une $\Z_p$-extension de $F$. Une strat\'egie classique pour \'etudier le groupe $\G_{F}$, est d'\'etudier le groupe $\G_{F_{\infty}}$ via la théorie d'Iwasawa des $\Z_p$-extensions (c'est par exemple le point de vue adopt\'e dans \cite{O}); l'id\'ee est ensuite de \textit{descendre} les r\'esultats au niveau de $F$. Dans cet article, nous nous int\'eressons aux objets suivants : 
\begin{itemize}
\item $F_{\infty}$ est la $\Z_p$-extension \textit{cyclotomique} d'un corps de nombres $F$.
\item $\L_{F_{\infty}}'$ est la pro-$p$-extension non ramifi\'ee, \textit{$p$-decompos\'ee}, maximale de $F_{\infty}$ et le groupe de Galois $$\G_{F_{\infty}}'=\Gal(\L_{F_{\infty}}'/F_{\infty}).$$
\end{itemize}
Le groupe $\G_{F_{\infty}}'$ est un quotient naturel de $\G_{F_{\infty}}$ et leurs structures sont fortement reliées. Dans cet article, nous nous intéressons à la question suivante :
$$\mbox{Le groupe } \G_{F_{\infty}}'\mbox{ est-il un pro-}p\mbox{-groupe libre ?}$$
On d\'esigne par $A_F'$ la $p$-partie du groupe des $p$-classes de $F$ et par $\mu_n$ le groupe des racines $n$-i\`emes de l'unit\'e. Faisons un bref historique de la question précédente. Supposons que le corps $F$ est \`a multiplication complexe et que la \textit{partie plus} de $A_F'$ est triviale (par exemple, $F$ est un corps cyclotomique $\Q(\mu_p)$ satisfaisant la \textit{conjecture} de Vandiver) ; sous ces hypothèses, on pensait que le groupe $\G_{F_{\infty}}'$ était pro-$p$-libre  (cf. \cite[Theorem 1]{Wi} et \cite[Exemples 3.2]{N2}). Cependant, en 2003, des r\'esultats de W. McCallum et R. Sharifi (cf. \cite{MS}) mettent en porte-\`a-faux les r\'esultats ant\'erieurs. En particulier, on a l'exemple suivant (cf. \cite{Sha}): pour $p=157$ et $F=\Q(\mu_{157})$ le groupe $\G_{F_{\infty}}'$ est isomorphe au groupe ab\'elien $\Z_p^2$. Aujourd'hui, les seuls exemples connus de couples $(F,p)$ pour lesquels $\G_{F_{\infty}}'$ est pro-$p$-libre sont les couples $(F,p)$ pour lesquels $\G_{F_{\infty}}'$ est trivial ou isomorphe \`a $\Z_p$.

Dans cet article, notre but est de caract\'eriser la pro-$p$-libert\'e de $\G_{F_{\infty}}'$ en termes de descente galoisienne pour les \textit{noyaux sauvages \'etales}. Pour tout $i\geq 1$, les noyaux sauvages \'etales $\WK_{2i}(F)$ attach\'e \`a $F$ et $p$ peuvent \^etre vus comme des versions \textit{tordues} du $p$-groupe des classes $A_F'$. Ils sont d\'efinis comme des noyaux de localisation en cohomologie galoisienne. Pour $i=1$, le groupe $\WK_{2}(F)$ est isomorphe \`a la $p$-partie du noyau sauvage \textit{classique} i.e. le noyau des symboles de Hilbert dans le groupe $K_2(F)$.\\

Dans une premi\`ere partie, nous \'etablissons un r\'esultat (Proposition \ref{kawada2}) qui caract\'erise la libert\'e d'un pro-$p$-groupe au moyen du morphisme de transfert. Dans une seconde partie, nous rappelons une description classique, due \`a Schneider, des noyaux sauvages en termes d'un certain module d'Iwasawa. L'objet de la troisi\`eme partie est d\'etablir le r\'esultat principal (th\'eor\`eme \ref{thm-desc-loc-cycl}) qui relie le comportement galoisien des noyaux sauvages \`a la structure de $\G_{F_{\infty}}'$.\\
Notons que l'\'etude de $\G_{F_{\infty}}'$ (pr\'ecisement la finitude) via les noyaux sauvages a d\'ej\`a \'et\'e entreprise dans \cite{As}. L'auteur retrouve le crit\`ere d'infinitude pour $\G_{F_{\infty}}'$ donn\'e dans \cite{JS}. Celui-ci provient d'une in\'egalit\'e \`a la Golod et Shafarevitch. Dans la derni\`ere partie, nous obtenons \`a l'aide du th\'eor\`eme \ref{thm-desc-loc-cycl} un crit\`ere de \textit{non} pro-$p$-libert\'e (th\'eor\`eme \ref{thm-ineg}) pour $\G_{F_{\infty}}'$ ; celui-ci s'exprime au moyen d'une in\'egalit\'e similaire \`a celle de \cite{JS}. Le crit\`ere nous permet de construire effectivement des couples $(F,p)$ pour lesquels $\G_{F_{\infty}}'$ n'est pas libre. Enfin, toujours comme cons\'equence du th\'eor\`eme \ref{thm-desc-loc-cycl}, nous obtenons le corollaire \ref{cor-partieplus} qui montre que pour un corps $F$ à multiplication complexe, la trvialit\'e de la partie plus de $A_F'$ est une condition \textit{n\'ecessaire} pour la libert\'e de $\G_{F_{\infty}}'$ (nous retrouvons ainsi le r\'esultat \cite[Proposition 3.3]{Wi}). 

\section{Pr\'eliminaires cohomologiques}
\subsection{Pro-$p$-libert\'e et transfert}
Fixons un nombre premier $p$. Dans ce paragraphe nous rappelons des r\'esultats standards sur la cohomologie des groupes et la notion de pro-$p$-libert\'e. On peut trouver la plupart des r\'esultats \'enonc\'es dans \cite{NSW} ou \cite{Se}.
 
Etant donn\'e un groupe ab\'elien $M$ localement compact, on rappelle que $M^*$ d\'esigne le dual de Pontryagin de $M$. C'est le groupe $\Hom(M,\R/\Z)$ des homomorphismes \textit{continus} de $M$ vers $\R/\Z$. On a une dualit\'e parfaite $M \simeq (M^*)^*$ qui transforme les groupes discrets en groupes compacts.
 
Lorsque $M$ est un pro-$p$-groupe ou un groupe discret de $p$-torsion $$M^*=\Hom(M,\Q_p/\Z_p).$$

Soit $G$ un groupe profini et $M$ un $G$-module \textit{compact}. Pour tout entier $n\geq 0$, les groupes d'homologie sont d\'efinis par dualit\'e \`a partir des groupes de cohomologie :
$$H_n(G,M):=\left(H^n(G,M^*)\right)^*.$$

Etant donn\'e un pro-$p$-groupe $G$, on note $G^{ab}$ l'abelianis\'e de $G$. C'est le quotient de $G$ par l'adh\'erence de son sous-groupe d\'eriv\'e $[G,G]$.\\
En outre, on a $$G^{ab}\simeq H_1(G,\Z_p).$$
La notation $H\triangleleft G$ signifie que $H$ est un sous-groupe distingu\'e de $G$. Enfin, on note $d(G):=\dim_{\F_p}(H^1(G,\F_p))$ le $p$-rang de $G$. C'est le nombre minimal de g\'en\'erateurs de $G$.

Etant donn\'e un pro-$p$-groupe $G$, on note $cd(G)$ (resp. $scd(G)$) la dimension cohomologique (resp. dimension cohomologique stricte) du pro-$p$-groupe $G$. Pro-$p$-libert\'e et dimension cohomologique sont reli\'ees par la proposition suivante : 
\begin{prop}
Soit $G$ un pro-$p$-groupe non trivial. On a l'\'equivalence :\\
(i) $G$ est pro-$p$-libre.

(ii) $cd(G)=1$.
\end{prop}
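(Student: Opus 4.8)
Le plan est de ramener toute l'équivalence à l'annulation du groupe $H^2(G,\F_p)$. J'invoquerais pour cela le fait standard (cf. \cite{NSW}, \cite{Se}) que pour un pro-$p$-groupe la dimension cohomologique est entièrement contrôlée par la cohomologie à coefficients dans $\F_p$ : on a $cd(G)\leq n$ si et seulement si $H^{n+1}(G,\F_p)=0$. En particulier, pour $G$ non trivial, la condition $cd(G)=1$ équivaut à $H^2(G,\F_p)=0$ (la non-trivialité assurant $H^1(G,\F_p)\neq 0$, donc $cd(G)\geq 1$).

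Pour l'implication (i)$\Rightarrow$(ii), il suffit de rappeler qu'un pro-$p$-groupe libre $G$ vérifie $H^2(G,\F_p)=0$ : intuitivement, un groupe libre est sans relations. D'après le critère ci-dessus, on en déduit $cd(G)\leq 1$, puis $cd(G)=1$ puisque $G$ est non trivial.

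L'implication (ii)$\Rightarrow$(i) est la partie délicate. Partant de $cd(G)=1$, donc de $H^2(G,\F_p)=0$, je choisirais une présentation minimale de $G$, c'est-à-dire une suite exacte
$$1\to R\to F\to G\to 1,$$
où $F$ est pro-$p$-libre de rang $d(G)=\dim_{\F_p}H^1(G,\F_p)$ et où la projection $F\to G$ induit un isomorphisme sur $H^1(-,\F_p)$. Il s'agit alors de montrer que $R$ est trivial. La suite exacte à cinq termes (inflation-restriction) associée à cette extension s'écrit
$$0\to H^1(G,\F_p)\to H^1(F,\F_p)\to H^1(R,\F_p)^G\to H^2(G,\F_p)\to H^2(F,\F_p).$$
Comme $F$ est libre, $H^2(F,\F_p)=0$ ; par hypothèse $H^2(G,\F_p)=0$ ; et par minimalité de la présentation la flèche $H^1(G,\F_p)\to H^1(F,\F_p)$ est un isomorphisme. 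On en tire $H^1(R,\F_p)^G=0$.

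Il reste à en déduire $R=1$, ce qui constitue à mes yeux l'obstacle principal. Par dualité de Pontryagin, le groupe $H^1(R,\F_p)^G$ s'identifie au dual des coinvariants $(R/\Phi(R))_G$ du quotient de Frattini de $R$, vu comme $\F_p[[G]]$-module (les invariants du dual étant le dual des coinvariants). Son annulation force donc $(R/\Phi(R))_G=0$. Le point technique est alors d'appliquer le lemme de Nakayama topologique : $G$ étant un pro-$p$-groupe agissant sur le $\F_p$-espace vectoriel compact $R/\Phi(R)$, l'idéal d'augmentation de $\F_p[[G]]$ est topologiquement nilpotent, de sorte que la nullité des coinvariants entraîne $R/\Phi(R)=0$. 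Comme le quotient de Frattini d'un pro-$p$-groupe ne s'annule que pour le groupe trivial, on obtient $R=1$, puis $G\cong F$ est pro-$p$-libre, ce qui achève la démonstration.
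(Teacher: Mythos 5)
Votre d\'emonstration est correcte et compl\`ete. L'article ne donne d'ailleurs aucune preuve de cette proposition --- elle y est rappel\'ee comme un r\'esultat classique, renvoy\'e \`a \cite{NSW} et \cite{Se} --- et votre argument (pr\'esentation minimale, suite exacte \`a cinq termes, puis dualit\'e de Pontryagin et lemme de Nakayama topologique pour conclure $R=1$) est pr\'ecis\'ement la d\'emonstration standard que l'on trouve dans ces r\'ef\'erences.
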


Il en d\'ecoule la caract\'erisation suivante :
\begin{prop}\label{scd}
Le groupe $G$ est pro-$p$-libre si et seulement si $\scd(G)=2$ et $G^{ab}$ est sans $p$-torsion.
\end{prop}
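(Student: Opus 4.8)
The plan is to reduce both implications, via the previous proposition, to controlling whether $\cd(G)$ equals $1$ or $2$. Recall that for every profinite group one has $\cd(G)\le \scd(G)\le \cd(G)+1$ (standard, see \cite{Se} or \cite{NSW}), so in all cases $\scd(G)=2$ forces $\cd(G)\in\{1,2\}$, and the whole point is to use the torsion of $G^{ab}$ to exclude $\cd(G)=2$. The engine I would use consists of two short exact sequences of coefficient modules together with the vanishing $H^i(G,\Q)=0$ for $i\ge 1$ (profinite cohomology with discrete coefficients is torsion in positive degrees, while $H^i(G,\Q)$ is a $\Q$-vector space). From $0\to\Z\to\Q\to\Q/\Z\to 0$ one extracts, for every $n\ge 1$, a natural isomorphism on $p$-parts $H^{n+1}(G,\Z)_{(p)}\cong H^{n}(G,\Q_p/\Z_p)$. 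This is exactly what lets one read off $\scd$ (which is defined through all discrete modules, in particular through $\Z$) from $\Q_p/\Z_p$-cohomology. I would also record $H^1(G,\Q_p/\Z_p)=\Hom(G,\Q_p/\Z_p)=(G^{ab})^*$, so that by Pontryagin duality the hypothesis ``$G^{ab}$ without $p$-torsion'' becomes ``$(G^{ab})^*=H^1(G,\Q_p/\Z_p)$ is $p$-divisible''.

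For the direct implication, assume $G$ free and nontrivial (the trivial group being excluded, as in the previous proposition). Then $\cd(G)=1$, hence $\scd(G)\le 2$; moreover $G^{ab}$ is a free $\Z_p$-module, so torsion-free. To obtain $\scd(G)=2$ I would evaluate the isomorphism above at $n=1$: $H^2(G,\Z)_{(p)}\cong H^1(G,\Q_p/\Z_p)=(G^{ab})^*$, which is nonzero because $G^{ab}\neq 0$ for a nontrivial pro-$p$-group. Thus $\scd(G)\ge 2$, whence $\scd(G)=2$.

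For the converse, assume $\scd(G)=2$ and $G^{ab}$ torsion-free; then $G\neq 1$ and $\cd(G)\le 2$, and I want $\cd(G)=1$, i.e.\ $H^2(G,\F_p)=0$ (the pro-$p$ criterion for $\cd\le 1$). Applying $\scd(G)\le 2$ to the discrete module $\Z$ gives $H^3(G,\Z)_{(p)}=0$, and the isomorphism at $n=2$ then yields the crucial vanishing $H^2(G,\Q_p/\Z_p)=0$. Feeding this into the Bockstein sequence attached to $0\to\F_p\to\Q_p/\Z_p\xrightarrow{p}\Q_p/\Z_p\to 0$ produces $H^2(G,\F_p)\cong H^1(G,\Q_p/\Z_p)/p=(G^{ab})^*/p$, which is $0$ since $(G^{ab})^*$ is $p$-divisible by torsion-freeness of $G^{ab}$. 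Hence $H^2(G,\F_p)=0$, so $\cd(G)=1$ and $G$ is free by the previous proposition.

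The part I expect to be delicate is not any computation but getting the homological bookkeeping right. The natural temptation is to test $\scd$ against continuous $\Z_p$-coefficients, which is misleading: for instance $H^2(\Z_p,\Z_p)=0$ although $\scd(\Z_p)=2$, the discrepancy living in a $\varprojlim^1$ term. The real content is therefore to route $\scd$ through the discrete modules $\Z$ and $\Q_p/\Z_p$ via $H^{\,n+1}(G,\Z)_{(p)}\cong H^{n}(G,\Q_p/\Z_p)$, and in particular to secure $H^2(G,\Q_p/\Z_p)=0$ in the borderline case $\cd(G)=\scd(G)=2$; once that vanishing is in hand, the Pontryagin-duality translation of the torsion hypothesis closes the argument at once. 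I would state the bounds $\cd\le\scd\le\cd+1$ explicitly at the outset, since they frame both implications.
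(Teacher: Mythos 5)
Your proposal is correct and follows essentially the approach the paper leaves implicit: the paper gives no written proof of this proposition beyond asserting that it follows (``il en d\'ecoule'') from the characterization of pro-$p$-freeness by $\cd(G)=1$, and your argument --- the bounds $\cd(G)\le\scd(G)\le\cd(G)+1$, the shift $H^{n+1}(G,\Z)_{(p)}\simeq H^{n}(G,\Q_p/\Z_p)$ coming from $0\to\Z\to\Q\to\Q/\Z\to 0$, and the Pontryagin-duality translation of ``$G^{ab}$ sans $p$-torsion'' into $p$-divisibility of $H^1(G,\Q_p/\Z_p)=(G^{ab})^*$ --- is exactly the standard d\'evissage needed to make that reduction work in both directions. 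Your insistence on testing $\scd$ against the discrete module $\Z$ rather than continuous $\Z_p$-coefficients is the right precaution (it is indeed where a careless argument would fail), and your exclusion of the trivial group matches the convention already present in the paper's preceding proposition.
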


Etant donn\'es un pro-$p$-groupe $G$ et un sous-groupe $H$ \textit{d'indice fini} de $G$, on d\'efinit (cf. \cite[VII, \S 8]{Se-CL}) un morphisme canonique appel\'e \textit{transfert} :
$$\Ver : G^{ab}\rightarrow H^{ab},$$
qui s'identifie \`a l'homomorphisme de restriction entre groupes d'homologie :
$$\res : H_1(G,\Z_p)\rightarrow H_1(H,\Z_p).$$
Par dualit\'e, le transfert s'identifie \`a l'application de corestriction entre les groupes de cohomologie :
$$\cor : H^1(H,\Q_p/ \Z_p)\rightarrow H^1(G,\Q_p /\Z_p) .$$

La proposition suivante fait un lien entre l'\'etude du transfert et la dimension cohomologique stricte (cf. \cite[Theorem 3.6.4]{NSW}). Elle caract\'erise les groupes profinis dont la dimension cohomologique stricte est \'egale \`a $2$. Ces groupes sont particuli\`erement importants, notamment parce qu'ils apparaissent naturellement en th\'eorie du corps de classes. Cette proposition est une combinaison de r\'esultats d\^us \`a J.-P. Serre et J. Tate.
\begin{prop}\label{kawada}
Soit $G$ un groupe profini non nul. Les propositions suivantes sont \'equivalentes :\\
(i) La dimension cohomologique stricte de $G$ est $2$.

(ii) Pour tout couple de sous-groupes ouverts distingu\'es $V \triangleleft U$, le transfert induit un isomorphisme $U^{ab}\stackrel{\simeq}{\rightarrow}(V^{ab})^{U/V}$.
\end{prop}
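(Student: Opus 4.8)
Le plan est de ramener l'\'enonc\'e, premier par premier, \`a une condition homologique sur $\scd_p$. Je commencerais par noter que, $\Q_p$ \'etant cohomologiquement trivial, la suite exacte $0\to\Z_p\to\Q_p\to\Q_p/\Z_p\to 0$ fournit $H^{q}(U,\Q_p/\Z_p)\simeq H^{q+1}(U,\Z_p)$ pour $q\ge 1$ et tout ouvert $U$. Par dualit\'e de Pontryagin et la d\'efinition $H_n(U,\Z_p)=\left(H^n(U,\Q_p/\Z_p)\right)^{*}$, on obtient le crit\`ere
$$\scd_p(G)\le 2\iff H_2(U,\Z_p)=0\ \text{pour tout sous-groupe ouvert }U\subseteq G,$$
la r\'eduction aux coefficients $\Z_p$ \'etant l'argument standard de d\'evissage (cf. \cite[\S 3.3]{NSW}). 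Comme $\scd_p$ ne prend jamais la valeur $1$ (Serre) et est invariant par passage \`a un sous-groupe ouvert, pour $G\neq 1$ la condition $\scd_p(G)\le 2$ \'equivaut \`a $\scd_p(G)=2$. C'est sous cette forme homologique que le transfert intervient, celui-ci s'identifiant \`a la restriction $\res\colon H_1(U,\Z_p)\to H_1(V,\Z_p)$.

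Fixons un couple $V\triangleleft U$ de sous-groupes ouverts, posons $\Gamma=U/V$ (fini) et $A=H_1(V,\Z_p)=V^{ab}$, muni de son action de $\Gamma$. La situation est gouvern\'ee par deux suites exactes reli\'ees. D'une part, la suite des cinq termes de la suite spectrale de Lyndon--Hochschild--Serre en homologie,
$$H_2(U,\Z_p)\xrightarrow{\ }H_2(\Gamma,\Z_p)\xrightarrow{\ d\ }A_{\Gamma}\xrightarrow{\ \bar\jmath\ }U^{ab}\xrightarrow{\ }\Gamma^{ab}\to 0,$$
o\`u $\bar\jmath$ est induite par la corestriction. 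D'autre part, la suite d\'efinissant la cohomologie de Tate de $\Gamma$,
$$0\to\hat H^{-1}(\Gamma,A)\to A_{\Gamma}\xrightarrow{\ N\ }A^{\Gamma}\to\hat H^{0}(\Gamma,A)\to 0,$$
o\`u $N$ provient de la norme $N_{\Gamma}=\sum_{\gamma\in\Gamma}\gamma$. Le transfert $\Ver\colon U^{ab}\to A$ a son image dans $A^{\Gamma}$ (car $\Gamma$ agit trivialement sur $U^{ab}$), et l'identit\'e cl\'e $\res\circ\cor=N_{\Gamma}$ se traduit par $\Ver\circ\bar\jmath=N$. On peut alors exprimer $\Ker(\Ver)$ et $\coker(\Ver)$ en fonction des termes correctifs $H_2(\Gamma,\Z_p)$, $\Gamma^{ab}$ des cinq termes et des groupes de Tate $\hat H^{-1}(\Gamma,A)$, $\hat H^{0}(\Gamma,A)$, l'articulation se faisant via les fl\`eches naturelles issues de l'extension $1\to V\to U\to\Gamma\to 1$.

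Pour (i)$\Rightarrow$(ii), l'hypoth\`ese $\scd_p(G)\le 2$ donne $H_2(U,\Z_p)=H_2(V,\Z_p)=0$. La premi\`ere annulation rend $d$ injective, de sorte que $\Ker(\bar\jmath)\simeq H_2(\Gamma,\Z_p)$ et $\coker(\bar\jmath)\simeq\Gamma^{ab}$ ; la seconde contr\^ole le module $A$ et permet d'identifier ces termes, de mani\`ere compatible avec $\Ver\circ\bar\jmath=N$, \`a $\hat H^{-1}(\Gamma,A)$ et \`a l'image de $\Gamma^{ab}$ dans $\hat H^{0}(\Gamma,A)$ ; une chasse au diagramme montre alors que $\Ker(\Ver)$ et $\coker(\Ver)$ s'annulent, i.e. que $\Ver$ est un isomorphisme. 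R\'eciproquement, pour (ii)$\Rightarrow$(i), je fixerais un ouvert $U$ et ferais parcourir \`a $V$ un syst\`eme cofinal de sous-groupes ouverts distingu\'es de $U$. Le fait que $\Ver$ soit un isomorphisme pour chaque couple impose l'annulation des obstructions pr\'ec\'edentes pour tout quotient fini $\Gamma=U/V$ ; en utilisant l'isomorphisme $H_2(U,\Z_p)\simeq\varprojlim_{V}H_2(U/V,\Z_p)$ et la compatibilit\'e des fl\`eches de transition, on en d\'eduit $H_2(U,\Z_p)=0$, et ce pour tout ouvert $U$, donc $\scd_p(G)\le 2$.

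Le point d\'elicat est pr\'ecis\'ement l'appariement exact des termes correctifs des cinq termes avec les groupes de cohomologie de Tate : c'est l\`a que r\'esident les r\'esultats de Serre et Tate. Les seules cinq premi\`eres fl\`eches de la suite spectrale ne suffisent pas (le multiplicateur de Schur $H_2(\Gamma,\Z_p)$ et $\Gamma^{ab}$ sont en g\'en\'eral non nuls, m\^eme lorsque $\Ver$ est bijectif), et il faut d\'evisser la suite spectrale plus loin pour identifier correctement $\Ker(\Ver)$ et $\coker(\Ver)$, puis justifier le passage \`a la limite projective sur $V$ dans la r\'eciproque. La v\'erification que tester l'isomorphisme sur les seuls quotients finis suffit \`a contr\^oler $\scd_p$ --- d\'efinie \`a l'aide de coefficients arbitraires --- constitue l'autre difficult\'e technique.
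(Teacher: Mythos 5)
Notez d'abord que l'article ne d\'emontre pas cette proposition : il la cite telle quelle (\cite[Theorem 3.6.4]{NSW}) en l'attribuant \`a Serre et Tate. Il n'y a donc pas de preuve interne \`a laquelle comparer la v\^otre, qui doit \^etre jug\'ee comme preuve autonome. Votre mise en place est correcte : la r\'eduction premier par premier de $\scd_p(G)\leq 2$ \`a l'annulation de $H_2(U,\Z_p)$ pour tout ouvert $U$, la suite exacte \`a cinq termes, la suite de Tate d\'efinissant $\hat H^{-1}$ et $\hat H^{0}$, l'identit\'e $\Ver\circ\bar\jmath=N$ et le fait que $\Ver$ prenne ses valeurs dans $(V^{ab})^{\Gamma}$ (petit b\'emol au passage : $\scd_p(G)\leq 2$ n'\'equivaut pas \`a $\scd_p(G)=2$ pour chaque $p$, car $\scd_p$ peut valoir $0$ ; c'est le sup sur $p$ qui vaut $2$ d\`es que $G\neq 1$). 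Mais l'\'etape que vous exp\'ediez par \textit{une chasse au diagramme} est en r\'ealit\'e tout le th\'eor\`eme. Pour que cette chasse aboutisse, il faut deux faits : (a) $\Ker(N)=\im(d)$ --- l'inclusion $\im(d)\subseteq\Ker(N)$ est formelle via $N=\Ver\circ\bar\jmath$, l'autre ne l'est pas ; (b) l'application induite par $\Ver$ sur les conoyaux, $\Gamma^{ab}\otimes\Z_p\rightarrow \hat H^{0}(\Gamma,A)$, est un isomorphisme. Autrement dit, il faut les isomorphismes de d\'ecalage $\hat H^{n}(\Gamma,A)\simeq\hat H^{n-2}(\Gamma,\Z_p)$ dans les degr\'es critiques $n=-1,0$, de mani\`ere compatible aux suites pr\'ec\'edentes. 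Or la d\'eg\'en\'erescence de la suite spectrale \`a deux lignes (cons\'equence des annulations $H_q(U,\Z_p)=H_q(V,\Z_p)=0$ pour $q\geq 2$) ne fournit ces d\'ecalages que dans les degr\'es $n\leq -2$ (et, dualement, $n\geq 2$) : les degr\'es $-1$ et $0$ sont pr\'ecis\'ement ceux o\`u la suite spectrale ne donne que les suites \`a cinq termes, sans isomorphisme. Les franchir exige un argument suppl\'ementaire (cup-produit avec une classe canonique \`a la Tate--Nakayama, ou le recollement explicite fait dans la preuve de \cite[Theorem 3.6.4]{NSW}) ; c'est l\`a le contenu m\^eme du r\'esultat de Serre et Tate, et votre r\'edaction le pr\'esuppose --- vous le reconnaissez du reste dans votre dernier paragraphe.

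La r\'eciproque comporte la m\^eme lacune, plus une difficult\'e propre. De (ii) appliqu\'e \`a un couple $V\triangleleft U$, l'injectivit\'e de $\Ver$ et l'identit\'e $N=\Ver\circ\bar\jmath$ donnent seulement $\Ker(\bar\jmath)=\Ker(N)$, c'est-\`a-dire $\im(d)=\hat H^{-1}(\Gamma,A)$ : c'est une information de \textit{surjectivit\'e} sur $d$. Ce dont vous avez besoin pour conclure via $H_2(U,\Z_p)\simeq\varprojlim_V H_2(U/V,\Z_p)$, c'est la nullit\'e des applications de d\'eflation $H_2(U,\Z_p)\rightarrow H_2(U/V,\Z_p)$, c'est-\`a-dire l'\textit{injectivit\'e} de $d$ --- qui ne d\'ecoule pas de (ii) pour un couple isol\'e. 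Il faut faire jouer l'hypoth\`ese sur tout le syst\`eme des couples $V'\subseteq V\triangleleft U$ et, de plus, traduire un \'enonc\'e portant sur les ab\'elianis\'es de sous-groupes \textit{ouverts} (ce que dit (ii)) en un \'enonc\'e sur l'homologie des quotients \textit{finis} $V/V'$ (ce qu'exigent les suites \`a cinq termes des extensions $1\rightarrow V/V'\rightarrow U/V'\rightarrow U/V\rightarrow 1$) ; rien de cela n'est esquiss\'e. En l'\'etat, votre texte est un plan fid\`ele de la structure de la preuve, avec les bons objets et les bonnes identit\'es, mais les deux implications reposent sur l'\'etape centrale manquante : c'est une lacune de fond, pas un simple manque de d\'etails.
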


Nous proposons une adaptation de la proposition pr\'ec\'edente ; celle-ci permet de caract\'eriser exactement la pro-$p$-libert\'e d'un groupe au moyen du tansfert.

\begin{prop}\label{kawada2}
Soit $G$ un pro-$p$-groupe non nul. Les propositions suivantes sont \'equivalentes :\\
(i) $G$ est pro-$p$-libre.

(ii) Pour tout sous-groupe ouvert $U\triangleleft G$ le module $U^{ab}$ est $\Z_p$-libre et pour tout couple de sous-groupes ouverts $V \triangleleft U$, le transfert induit une injection
$$\mathcal V : U^{ab}\otimes \Q_p/\Z_p \hookrightarrow V^{ab}\otimes \Q_p/\Z_p ,$$
o\`u $\mathcal V$ est d\'efinie par $\mathcal V(x\otimes \frac{1}{p^n})=\Ver(x)\otimes\frac{1}{p^n}$.

\end{prop}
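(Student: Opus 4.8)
The plan is to deduce Proposition \ref{kawada2} by playing the freeness criterion of Proposition \ref{scd} ($G$ is free pro-$p$ $\iff \scd(G)=2$ and $G^{ab}$ has no $p$-torsion) against Tate's criterion of Proposition \ref{kawada} ($\scd(G)=2 \iff$ the transfer induces an isomorphism $U^{ab}\xrightarrow{\simeq}(V^{ab})^{U/V}$ for every $V\triangleleft U$). The bridge between the homological condition $\mathcal V$ of (ii) and the invariants appearing in Proposition \ref{kawada} is the following elementary remark, which I would isolate first: when $U^{ab}$ and $V^{ab}$ are $\Z_p$-free, the map $\mathcal V$ is injective if and only if $\Ver:U^{ab}\to V^{ab}$ is itself injective and has torsion-free cokernel.

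To prove this bridge I would factor $f=\Ver$ as a surjection $U^{ab}\to\im f$ followed by the inclusion $\im f\hookrightarrow V^{ab}$, and tensor the two short exact sequences $0\to\ker f\to U^{ab}\to\im f\to 0$ and $0\to\im f\to V^{ab}\to\coker f\to 0$ with $\Q_p/\Z_p$. Since $\Z_p$ is a principal ideal domain, $\ker f$ and $\im f$ are again free, hence flat, so the first sequence stays exact and $\mathcal V$ is the composite of the surjection $U^{ab}\otimes\Q_p/\Z_p\to (\im f)\otimes\Q_p/\Z_p$ with the map $(\im f)\otimes\Q_p/\Z_p\to V^{ab}\otimes\Q_p/\Z_p$, whose kernel is exactly the torsion submodule of $\coker f$. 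Chasing kernels, $\ker\mathcal V=0$ forces $\ker f\otimes\Q_p/\Z_p=0$, i.e. $\ker f=0$ since $\ker f$ is free, and forces $\coker f$ to be torsion-free; the converse is immediate.

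For (i)$\Rightarrow$(ii) I would use that every open subgroup $U$ of a free pro-$p$-group is free pro-$p$, so $U^{ab}$ is $\Z_p$-free, which is the first half of (ii). Moreover $\scd(G)=2$ by Proposition \ref{scd}, so Proposition \ref{kawada} gives that $\Ver$ is injective with image $(V^{ab})^{U/V}$. It then suffices to note that $(V^{ab})^{U/V}$ is a pure submodule of the torsion-free module $V^{ab}$: if $p^k x$ is fixed by every $\sigma\in U/V$, then $p^k(\sigma-1)x=0$, and torsion-freeness of $V^{ab}$ gives $(\sigma-1)x=0$, so $x$ is already invariant. Hence $\coker\Ver=V^{ab}/(V^{ab})^{U/V}$ is torsion-free, and the bridge yields the injectivity of $\mathcal V$.

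For (ii)$\Rightarrow$(i), taking $U=G$ in (ii) shows $G^{ab}$ is $\Z_p$-free, hence torsion-free; by Proposition \ref{scd} it remains to prove $\scd(G)=2$, i.e. by Proposition \ref{kawada} that $\Ver$ induces an isomorphism $U^{ab}\xrightarrow{\simeq}(V^{ab})^{U/V}$ for every $V\triangleleft U$. The bridge gives injectivity of $\Ver$ together with torsion-freeness of $\coker\Ver$, and one always has $\im\Ver\subseteq (V^{ab})^{U/V}$. The step I expect to be the main obstacle is surjectivity onto the invariants, and here I would invoke the standard homological identity $\res\circ\cor=N_{U/V}$: writing $\cor:V^{ab}\to U^{ab}$ for the natural map, one gets $\Ver\circ\cor=N_{U/V}$, so for $y\in (V^{ab})^{U/V}$ one has $[U:V]\,y=N_{U/V}(y)\in\im\Ver$. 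Thus $(V^{ab})^{U/V}/\im\Ver$ is annihilated by the $p$-power $[U:V]$, hence is torsion; being also a submodule of the torsion-free module $\coker\Ver$, it vanishes. Therefore $\im\Ver=(V^{ab})^{U/V}$, so $\scd(G)=2$ and $G$ is free by Proposition \ref{scd}.
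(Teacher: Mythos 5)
Votre preuve est correcte et suit exactement la voie que l'article indique : la preuve du papier se réduit à la phrase « la proposition est une conséquence des propositions \ref{scd} et \ref{kawada} », et votre argument ne fait que détailler cette déduction. Le lemme-pont (l'injectivité de $\mathcal V$ équivaut, pour des abélianisés sans $\Z_p$-torsion, à l'injectivité de $\Ver$ jointe à l'absence de torsion dans $\coker(\Ver)$), l'argument de pureté des invariants pour $(i)\Rightarrow(ii)$, et l'identité $\Ver\circ\cor=N_{U/V}$ pour obtenir la surjectivité sur $(V^{ab})^{U/V}$ dans $(ii)\Rightarrow(i)$ sont précisément les vérifications que la preuve d'une ligne du papier laisse au lecteur.
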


\begin{proof}
la proposition est une cons\'equence des propositions \ref{scd} et \ref{kawada}.
\end{proof}
\subsection{$K$-th\'eorie des anneaux d'entiers et cohomologie galoisienne}
Commen\c cons par donner quelques notations.\\
Soient $p$ un nombre premier et $F$ un corps de nombres, on note :
\begin{quote}
\item $\mu_{p^n}$ le groupe des racines $p^n$-i\`emes de l'unit\'e,
\item $\mu_{p^{\infty}}:=\bigcup_{n\geq 1}\mu_{p^n}$ et $\Z_p(1):=\varprojlim \mu_{p^n}$,
\item $F_v$ le localis\'e de $F$ en $v$, o\`u $v$ d\'esigne une place de $F$,
\item $S_p=S_p(F)$ l'ensemble des places de $F$ divisant $p$ et $\infty$,
\item $S=S_F$ un ensemble fini de places de $F$ contenant $S_p$,
\item $\O_F^S$ l'anneau des $S$-entiers de $F$,
\item $A_F'$ la $p$-partie du groupe du groupe de classes de $\O_F^{S_p}$,
\item $F^S$ l'extension $S$-ramifi\'ee maximale de $F$ et $G_F^S:=\Gal(F^S/F)$,
\item $r_1(F)$ (resp. $r_2(F)$) le  nombre de places r\'eelles (resp. de places complexes \`a conjuguaison pr\`es) de $F$.
\end{quote}

Enfin pour tout $i\geq 0$, on pose
$$A(i):=A\otimes \Z_p(i)=A\otimes \underbrace{\Z_p(1)\otimes \cdots \otimes \Z_p(1),}_{\mbox{$i$ fois}}$$
le $i$-i\`eme tordu \`a la Tate de $A$.

\textbf{Hypoth\`ese :} dans toute la suite du texte, lorsque $p=2$ on suppose que $\sqrt{-1} \in F$.

Pour $i\geq 1$, et $k=1,2$ les groupes de $K$-th\'eorie \'etale $\K_{2i+2-k}(\O_F^S)$ introduits par Dwyer et Friedlander (cf. \cite{KM}) sont isomorphe aux groupes de cohomologie galoisienne continue :
$$H_{cont}^k(G_F^S,\Z_p(i+1)):=\varprojlim H^k(G_F^S,\Z/p^n(i+1)).$$

Les r\'esultats de Quillen et Borel sur la $K$-th\'eorie alg\'ebrique ainsi que les r\'esultats de Soul\'e sur la surjectivit\'e des caract\`eres de Chern (cf \cite{Sou}) nous donnent les propri\'et\'es suivantes :
\begin{itemize}
\item Pour tout $i \geq 1$, les groupes $\K_{2i}(\O_F^S)$ sont \textit{finis}.

\item Pour tout $i\geq 1$, les groupes $\K_{2i+1}(\O_F^S)\simeq \K_{2i+1}(F)$ sont de \textit{type fini} sur $\Z_p$. Pr\'ecis\'ement, on a :

$$rg_{\Z_p}(\K_{2i+1}(F)) =
\left\{
\begin{array}{cc}
r_1+r_2  & \mbox{ si } i \mbox{ est pair, } \\
r_2      & \mbox{ si } i \mbox{ est impair }
\end{array}
\right.
$$
\end{itemize}

Rappelons qu'une extension de corps de nombres $L/F$ est une $p$-extension lorsque $L/F$ est une extension galoisienne finie et $\Gal(L/F)$ est un $p$-groupe.

On fixe un ensemble fini $S$ de places de $F$ contenant les places $p$-adiques et les places \`a l'infini. Par abus, on note toujours $S:=S_L$ l'ensemble des places de $L$ au-dessus des places contenues dans $S$. On s'int\'eresse maintenant au comportement galoisien des $K$-groupes dans les $p$-extensions $S$-ramifi\'ees.

Soit $M$ un groupe ab\'elien et $G$ un groupe op\'erant sur $M$. On note 
\begin{itemize}
\item $M^G$ le sous-groupe de $M$ des \'el\'ements invariants par $G$.  
\item $M_G$ le quotient $M/I_GM$, o\`u $I_G$ est le sous-groupe de $\Z[G]$ engendr\'e par les $g-1$, $g \in G$.
\end{itemize}

Soit $L/F$ une $p$-extension de groupe de Galois $G$, non ramifi\'ee hors de $S$. Pour $i\geq 1$ et $k\in\{1,2\}$ on a un morphisme d'extension :
$$e_{k,i} : \K_{2i+2-k}(\O_F^S) \rightarrow \K_{2i+2-k}(\O_L^S)^G,$$
qui s'interpr`ete cohomologiquement comme un morphisme de restriction.

Les noyaux des morphismes d'extension sont appel\'es \textit{noyaux de capitulation}. Ils jouent un r\^ole central dans la suite de l'article.
\begin{defn}
Pour toute extension $S$-ramifi\'ee finie $L/F$ et tout entier $i\geq 1$, on note :
$$\Cap_i^{S}(L/F):=\ker(\K_{2i}(\O_{F}^S)\rightarrow \K_{2i}(\O_{L}^S)).$$
Si $\L/F$ est une extension alg\'ebrique, on pose $\Cap_i^S(\L/F)=\varinjlim \Cap(L/F)$, o\`u $L$ parcourt les sous-extensions finies de $\L/F$.
\end{defn}

Tout comme les noyaux de capitulation des groupes de classes qui s'expriment en termes de cohomologie des \textit{unit\'es}, les noyaux $\Cap_i^{S}(L/F)$ sont reli\'es \`a la cohomologie des $K$-groupes \textit{impairs}. Pour $i=1$, le r\'esultat suivant est \`a rapprocher d'un r\'esultat de B. Kahn (cf \cite{Ka}). Pour la g\'en\'eralisation \`a tout $i\geq 1$, on renvoie \`a \cite[Theorem 1.2]{KM}.\\
Les groupes $\hat{H}^*(G,.)$ d\'esignent les groupes de cohomologie modifi\'es (cf. par exemple \cite[Chapitre VIII]{Se-CL})
\begin{thm}
Soit $L/F$ une $p$-extension, non ramifi\'ee en dehors de $S$ et $G:=\Gal(L/F)$. Alors
$$\Cap_i^{S}(L/F)\simeq \hat{H}^{-1}(G,\K_{2i}(\O_L^S)) \simeq H^1(G,\K_{2i+1}(L)) , \mbox{\ et}$$
$$\coker(e_{2,i})\simeq \hat{H}^{0}(G,\K_{2i}(\O_L^S)) \simeq H^2(G,\K_{2i+1}(L)).$$
De plus, lorsque $L/F$ est cyclique, le quotient de Herbrand
$$h(G,\K_{2i+1}(L))=\frac{|H^2(G,\K_{2i+1}(L))|}{|H^1(G,\K_{2i+1}(L))|}$$
est trivial.
\end{thm}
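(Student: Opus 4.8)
Le plan est de traduire l'énoncé en cohomologie galoisienne continue à l'aide de l'identification de Dwyer--Friedlander $\K_{2i+2-k}(\O_{\bullet}^S) \simeq H^k_{cont}(G_{\bullet}^S, \Z_p(i+1))$ rappelée plus haut, puis d'exploiter la suite spectrale de Hochschild--Serre associée à $1 \to G_L^S \to G_F^S \to G \to 1$. Posons $M = \Z_p(i+1)$, $A = \K_{2i+1}(L) \simeq H^1_{cont}(G_L^S, M)$ et $B = \K_{2i}(\O_L^S) \simeq H^2_{cont}(G_L^S, M)$, vus comme $G$-modules. Comme $i \geq 1$, le groupe $G_L^S$ agit sur $M$ par une puissance non triviale du caractère cyclotomique, donc $H^0(G_L^S, M) = 0$ ; joint à $\cd_p(G_L^S) = 2$ (c'est ici que sert l'hypothèse $\sqrt{-1} \in F$ lorsque $p = 2$), ceci assure que la suite spectrale $E_2^{p,q} = H^p(G, H^q(G_L^S, M)) \Rightarrow H^{p+q}(G_F^S, M)$ n'a que les deux lignes $q = 1$ et $q = 2$.

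De cette suite spectrale à deux lignes, j'extrairais d'abord les identifications du côté de $\K_{2i+1}(L)$. La filtration de $H^2(G_F^S, M) = \K_{2i}(\O_F^S)$ donne une suite exacte $0 \to H^1(G, A) \to \K_{2i}(\O_F^S) \xrightarrow{e_{2,i}} B^G \xrightarrow{d_2} H^2(G, A)$, où $e_{2,i}$ est l'arête de restriction ; ainsi $\Cap_i^S(L/F) = \ker(e_{2,i}) \simeq H^1(G, \K_{2i+1}(L))$. Pour le conoyau, j'utiliserais $\cd_p(G_F^S) = 2$, soit $H^3(G_F^S, M) = 0$ : la différentielle $d_2 : B^G \to H^2(G, A)$ est alors surjective de noyau $\im(e_{2,i})$, d'où $\coker(e_{2,i}) \simeq H^2(G, \K_{2i+1}(L))$. (La même lecture des termes de degré total $\geq 3$ fournit d'ailleurs des isomorphismes $H^p(G, B) \simeq H^{p+2}(G, A)$ pour $p \geq 1$.)

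Il reste à reconnaître ces deux groupes comme cohomologie de Tate de $B$, et c'est là que je situe l'obstacle principal. J'introduirais la corestriction $\cor : B = H^2(G_L^S, M) \to H^2(G_F^S, M) = \K_{2i}(\O_F^S)$, qui vérifie $\res \circ \cor = N_G$ (où $N_G = \sum_{g \in G} g$) et $\cor \circ (g-1) = 0$, de sorte que $I_G B \subseteq \ker(\cor)$. Le point crucial est que $\cor$ induit un \emph{isomorphisme} $B_G \simeq \K_{2i}(\O_F^S)$, c'est-à-dire que $\cor$ est surjective et que $\ker(\cor) = I_G B$. Je le déduirais de la dualité globale de Poitou--Tate : $G_F^S$ est un groupe de dualité de dimension $2$ en $p$, et en degré maximal la corestriction sur $H^2(\,\cdot\,, M)$ est duale de l'inclusion des invariants $H^0(G_F^S, M^*) \hookrightarrow H^0(G_L^S, M^*)$ pour $M^* = \Q_p/\Z_p(-i)$ ; comme $G = G_F^S/G_L^S$ donne $H^0(G_L^S, M^*)^G = H^0(G_F^S, M^*)$, la dualisation fournit exactement $B_G \simeq \K_{2i}(\O_F^S)$. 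Une fois ceci acquis, $e_{2,i}$ s'identifie au morphisme induit par $N_G$ sur $B_G = B/I_G B$, et l'on obtient sans peine $\coker(e_{2,i}) = B^G/N_G B = \hat H^0(G, B)$ ainsi que $\Cap_i^S(L/F) = \ker(N_G)/I_G B = \hat H^{-1}(G, B)$.

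Enfin, pour la trivialité du quotient de Herbrand lorsque $L/F$ est cyclique, j'invoquerais la finitude de $B = \K_{2i}(\O_L^S)$ rappelée plus haut : un $G$-module fini a un quotient de Herbrand égal à $1$, donc $|\hat H^0(G, B)| = |\hat H^1(G, B)|$, et la périodicité $\hat H^1(G, B) \simeq \hat H^{-1}(G, B)$ propre au cas cyclique donne $|\hat H^0(G, B)| = |\hat H^{-1}(G, B)|$. Via les isomorphismes ci-dessus, cela s'écrit $|H^2(G, \K_{2i+1}(L))| = |H^1(G, \K_{2i+1}(L))|$, soit $h(G, \K_{2i+1}(L)) = 1$. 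Je m'attends à ce que toute la difficulté réside dans l'isomorphisme de corestriction en degré maximal ; les identifications du côté de $\K_{2i+1}(L)$ et la nullité du quotient de Herbrand en résultent ensuite formellement.
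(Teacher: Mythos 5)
The paper itself gives no proof of this theorem: it is quoted from \cite[Theorem 1.2]{KM}. Your proposal can therefore only be measured against the standard argument, whose outline you do follow. The spectral-sequence half is correct: $H^0(G_L^S,\Z_p(i+1))=0$ since the cyclotomic character has infinite order on $G_L^S$ and $i+1\neq 0$; the standing hypothesis ($p$ odd or $\sqrt{-1}\in F$) gives $\cd_p(G_L^S)=\cd_p(G_F^S)=2$; and the two-row Hochschild--Serre spectral sequence (taken at finite level $\Z/p^m(i+1)$ and passed to the limit) yields $\ker(e_{2,i})\simeq H^1(G,\K_{2i+1}(L))$ and, since $H^3_{cont}(G_F^S,\Z_p(i+1))=0$, $\coker(e_{2,i})\simeq H^2(G,\K_{2i+1}(L))$. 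Your reduction of the Tate-cohomology identifications to the co-descent isomorphism $\cor:\K_{2i}(\O_L^S)_G\stackrel{\simeq}{\rightarrow}\K_{2i}(\O_F^S)$, via $\res\circ\cor=N_G$, is also correct, as is the Herbrand-quotient argument (a finite module has trivial Herbrand quotient, plus periodicity for cyclic $G$).

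The genuine gap is your proof of that co-descent isomorphism --- precisely the step you single out as crucial. You deduce it from a top-degree duality: ``the corestriction on $H^2(\cdot,M)$ is dual to the inclusion $H^0(G_F^S,M^*)\hookrightarrow H^0(G_L^S,M^*)$'', with $M^*=\Q_p/\Z_p(-i)$. This presupposes a Tate-type isomorphism $H^2(G_F^S,M)\simeq H^0(G_F^S,M^*)^*$, i.e. that $G_F^S$ is a duality group at $p$ with dualizing module $\mu_{p^{\infty}}$. That holds for local fields but is false for the global $S$-ramified group: Poitou--Tate identifies $H^0(G_F^S,\Q_p/\Z_p(-i))^*$ not with $H^2(G_F^S,\Z_p(i+1))$ but with the \emph{cokernel} of the localization map $H^2(G_F^S,\Z_p(i+1))\rightarrow\oplus_{v\in S}H^2(F_v,\Z_p(i+1))$ --- this is exactly the four-term exact sequence displayed in \S 2.1 of the paper. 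Concretely, your duality would force $\K_{2i}(\O_F^S)$ to be cyclic (being dual to a subgroup of $\Q_p/\Z_p$), whereas it contains $\WK_{2i}(F)$, whose $p$-rank can be made arbitrarily large (that is the very point of \S 4 of the paper). So this step, as written, fails.

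The isomorphism you need is nevertheless true, and requires no duality at all: it is the general co-descent in top degree for groups of finite cohomological dimension. If $\cd_p\mathcal G=n<\infty$ and $\mathcal H\triangleleft\mathcal G$ is open with quotient $G$, then for every discrete $p$-torsion $\mathcal G$-module $A$ the corestriction induces an isomorphism $H^n(\mathcal H,A)_G\simeq H^n(\mathcal G,A)$ (cf. \cite[Proposition 3.3.11]{NSW}; surjectivity, for instance, follows from Shapiro's lemma applied to $0\rightarrow I_G\otimes A\rightarrow\Z[G]\otimes A\rightarrow A\rightarrow 0$ together with $H^{n+1}(\mathcal G,\cdot)=0$). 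Applying this to $\mathcal G=G_F^S$, $A=\Z/p^m(i+1)$, and passing to the inverse limit over $m$ (all groups involved are finite, so there is no $\varprojlim^1$ obstruction) gives $\K_{2i}(\O_L^S)_G\simeq\K_{2i}(\O_F^S)$. With this substitution your proof is complete, and it is in essence the argument of \cite{KM}.
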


\textbf{Remarques.}
\begin{itemize}
\item Les noyaux $\Cap_i^{S}(L/F)$ ne d\'ependent pas de l'ensemble $S$ contenant $S_p\cup S_{\infty}$ et les premiers ramifi\'es dans $L/F$. On note d\'esormais $$\Cap_i(L/F):=\Cap_i^{S}(L/F).$$

\item Trouver une minoration int\'eressante de $|\Cap_i(L/F)|$ est en g\'en\'eral plus difficile (le probl\`eme est soulev\'e par B. Kahn dans l'introduction de \cite{Ka}). Dans \cite{AM}, les auteurs donnent une minoration de cet ordre sous certaines hypoth\`eses de ramification pour $L/F$.
\end{itemize}

La proposition suivante est classique et porte sur la trivialit\'e des noyaux de capitulation dans une $p$-extension.
\begin{prop}\label{cap-nul} Soient $L/F$ et $L'/F$ des $p$-extensions finies, $S$-ramifi\'ees avec $L\subset L'$. Alors 
$$\Cap_i(L'/F)=0\  \mbox{si et seulement si }\  \Cap_i(L'/L)=0 \mbox{ et } \Cap_i(L/F)=0$$
\end{prop}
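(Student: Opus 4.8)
The plan is to reduce everything to functoriality of the extension (restriction) maps together with the cohomological dictionary of the Theorem. Write $A=\K_{2i}(\O_F^S)$, $B=\K_{2i}(\O_L^S)$, $C=\K_{2i}(\O_{L'}^S)$, and let $f\colon A\to B$, $g\colon B\to C$ be the extension maps. Since these are restriction maps in cohomology they compose, $e^{L'/F}_{2,i}=g\circ f$, so that $\Cap_i(L/F)=\ker f$, $\Cap_i(L'/L)=\ker g$ and $\Cap_i(L'/F)=\ker(g\circ f)$. The implication ``$\Leftarrow$'' is then immediate, a composite of injections being injective; and in the direction ``$\Rightarrow$'' the vanishing $\Cap_i(L/F)=\ker f=0$ is free, because $f(x)=0$ forces $g(f(x))=0$. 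The entire content is thus to deduce $\Cap_i(L'/L)=0$ from $\Cap_i(L'/F)=0$, and I stress that this is \emph{not} formal: a class of $B$ lying outside $\im f$ could a priori die in $C$ without contradicting $\ker(g\circ f)=0$.

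To capture this I would argue cohomologically. Put $G=\Gal(L'/F)$, $H=\Gal(L'/L)$, $Q=G/H=\Gal(L/F)$ and $M=\K_{2i+1}(L')$. The identification $\K_{2i+1}(\,\cdot\,)\simeq H^1_{cont}(G_{\,\cdot}^S,\Z_p(i+1))$ together with the vanishing $\Z_p(i+1)^{G_{L'}^S}=0$ (the cyclotomic character has infinite order and $i\geq 1$) gives, via restriction--inflation, the identification $M^H=\K_{2i+1}(L)$. Translating each term of the Hochschild--Serre five-term sequence for $1\to H\to G\to Q\to 1$ through the Theorem then yields
$$0\to \Cap_i(L/F)\to \Cap_i(L'/F)\xrightarrow{\ \res\ }\Cap_i(L'/L)^{Q}\to \coker\bigl(e^{L/F}_{2,i}\bigr),$$
where the last term is $H^2(Q,M^H)\simeq\coker(e^{L/F}_{2,i})$. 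Assuming $\Cap_i(L'/F)=0$, exactness forces an injection $\Cap_i(L'/L)^{Q}\hookrightarrow\coker(e^{L/F}_{2,i})$.

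I would then first settle the case where $L/F$ is cyclic of degree $p$. There the triviality of the Herbrand quotient $h(Q,\K_{2i+1}(L))=|\coker(e^{L/F}_{2,i})|/|\Cap_i(L/F)|$ from the Theorem, combined with $\Cap_i(L/F)=0$, gives $\coker(e^{L/F}_{2,i})=0$, whence $\Cap_i(L'/L)^{Q}=0$. As $\Cap_i(L'/L)$ is a finite $p$-group (a subgroup of the finite group $\K_{2i}(\O_L^S)$) acted on by the $p$-group $Q$, the fixed-point lemma for $p$-groups forces $\Cap_i(L'/L)=0$.

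It remains to remove the cyclicity hypothesis by d\'evissage, inducting on $[L:F]$. Choosing a normal subgroup of index $p$ in the $p$-group $Q$ produces an intermediate field $F\subset F_1\subset L$ with $F_1/F$ cyclic of degree $p$; the cyclic case applied to $F\subset F_1\subset L'$ gives $\Cap_i(L'/F_1)=0$, and the induction hypothesis applied to the shorter tower $F_1\subset L\subset L'$ (with $[L:F_1]<[L:F]$) then gives $\Cap_i(L'/L)=0$. The main obstacle is exactly this non-formal step: the Herbrand argument that annihilates $\coker(e^{L/F}_{2,i})$ is available only for cyclic layers, which is what forces the d\'evissage, and the passage from the invariants $\Cap_i(L'/L)^{Q}$ to $\Cap_i(L'/L)$ itself rests on the $p$-group fixed-point lemma.
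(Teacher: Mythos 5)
Your proof is correct; the main point of context is that the paper itself does not prove Proposition \ref{cap-nul} at all --- it is quoted as \emph{classique} and left without argument --- so there is no internal proof to compare yours against. What you have written supplies the missing argument, and it is essentially the standard one from the literature on \'etale capitulation kernels (it is in substance the argument of Kolster--Movahhedi \cite{KM}). You rightly isolate the only non-formal implication, namely that $\Cap_i(L'/F)=0$ forces $\Cap_i(L'/L)=0$, and your treatment of it is sound. Writing $G=\Gal(L'/F)$, $H=\Gal(L'/L)$, $Q=G/H$ and $M=\K_{2i+1}(L')$, the dictionary $\Cap_i\simeq H^1(\Gal,\K_{2i+1})$ from the paper's theorem, combined with the descent isomorphism $\K_{2i+1}(L)\simeq M^{H}$ (valid because $\Z_p(i+1)$ has trivial invariants under the $S$-ramified Galois group when $i\geq 1$), turns the five-term Hochschild--Serre sequence into
$$0\rightarrow \Cap_i(L/F)\rightarrow \Cap_i(L'/F)\rightarrow \Cap_i(L'/L)^{Q}\rightarrow \coker\bigl(e^{L/F}_{2,i}\bigr),$$
and in a cyclic layer the triviality of the Herbrand quotient converts $\Cap_i(L/F)=0$ into $\coker\bigl(e^{L/F}_{2,i}\bigr)=0$, after which the fixed-point lemma for the $p$-group $Q$ acting on the finite $p$-group $\Cap_i(L'/L)$ (finite since $\K_{2i}(\O_L^S)$ is finite) gives the vanishing; the d\'evissage through a normal subextension of degree $p$, available because $Q$ is a $p$-group, then makes the induction on $[L:F]$ run correctly. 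The only step you pass over silently is that the isomorphism $\K_{2i+1}(L)\simeq M^{H}$ is $Q$-equivariant, which is needed so that $H^2(Q,M^{H})$ really is $\coker\bigl(e^{L/F}_{2,i}\bigr)$ (equivalently, so that the Herbrand quotient of the paper's theorem applies to $M^H$); this follows at once from the equivariance of restriction maps under conjugation, so it is a routine remark to add rather than a gap.
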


\subsection{L'alg\`ebre d'Iwasawa}
Nous terminons cette partie par quelques rappels sur l'alg\`ebre d'Iwasawa. Soit $\Lambda := \Z_p[[T]]$, l'alg\`ebre des s\'eries formelles en $T$ \`a coefficient dans $\Z_p$. Soient $\Gamma$ un pro-$p$-groupe multiplicatif isomorphe \`a $\Z_p$ et $\gamma$ un g\'en\'erateur topologique de $\Gamma$. Pour tout entier $n\geq 0$, on pose $\Gamma_n := \Gamma^{p^n}$. L'ag\`ebre $\Lambda$ est topologiquement isomorphe \`a l'alg\`ebre de groupes compl\`ete $\Z_p[[\Gamma]]=\varprojlim \Z_p[[\Gamma/\Gamma_n]]$ via l'application $\gamma \rightarrow 1+T$.\\
Si $M$ est un $\Lambda$-module de type fini, il existe des polyn\^omes distingu\'es irr\'eductibles $f_i\in \Z_p[T]$, des entiers $n_i$, $m_j$ et $r_M$ tels que $M$ est pseudo-isomorphe \`a
$$\Lambda^r\oplus \bigoplus_{i=1}^{n}\Lambda/(f_i)^{n_i} \oplus \bigoplus_{j=1}^{m}\Lambda/(p)^{m_j},$$
Les entiers $r_M$, $\lambda_M:=\sum_{i=1}^{n}n_i\deg(f_i)$ et $\mu_M:=\sum_{j=1}^{m}m_i$ sont les \textit{invariants d'Iwasawa} de $M$. Le polyn\^ome $f_M(T)=p^{\mu_X}\Pi_{i=1}^{n}f_i(T)$ est appel\'e polyn\^ome caract\'eristique de $M$.

\bigskip

Rappelons que $F_{\infty}=\bigcup_{n\geq 0}F_n$ d\'esigne la $\Z_p$-extension cyclotomique de $F$ et $\Gamma:=\Gal(F_{\infty}/F)$. On note $\G_{F_{\infty}}'$ le groupe de Galois sur $F_{\infty}$ de la pro-$p$-extension non ramifi\'ee, $p$-d\'ecompos\'ee, maximale de $F_{\infty}$ (c'est aussi la pro-$p$-extension non ramifi\'ee, d\'ecompos\'ee en \textit{toute} place, maximale de $F_{\infty}$).

Dans cet article, nous consid\'erons le $\Lambda$-module $$X_{F_{\infty}}':=H_1(\G_{F_{\infty}}',\Z_p)\simeq \G_{F_{\infty}}'^{ab}.$$
C'est le groupe de Galois sur $F_{\infty}$ de la pro-$p$-extension non ramifi\'ee, $p$-d\'ecompos\'ee, \textit{ab\'elienne}, maximale de $F_{\infty}$. Par la th\'eorie du corps de classe c'est aussi :
$$X_{F_{\infty}}'\simeq \varprojlim A_{F_n}',$$ o\`u la limite projective est prise sur les morphismes de normes.\\
Le $\Lambda$-module $X_{F_{\infty}}'$ est de type fini et de torsion (i.e. $r_{X_{F_{\infty}}'}=0$).\\
Il est conjectur\'e que $\mu_{X_{F_{\infty}}'}=0$. Ce r\'esultat est vrai lorsque $F/\Q$ est ab\'elien, d'apr\`es un th\'eor\`eme de Ferrero et Washigton (cf. \cite{FW}).\\
\textbf{Remarque.} L'hypoth\`ese $\mu_{X_{F_{\infty}}'}=0$ \'equivaut \`a dire que $\G_{\infty}'$ est un pro-$p$-groupe de type fini (i.e. $d(\G_{\infty}')< +\infty$).

\medskip

Rappelons pour finir les notions de \textit{co-adjoint} et \textit{suites admissibles} qui seront utiles dans la suite. Les $\Lambda$-modules consid\'er\'es seront de type fini.\\
Consid\'erons l'application naturelle de localisation :
\[\displaystyle{\Psi_M : M\rightarrow \bigoplus_{\mathfrak p\in \supp(M)}M_{\mathfrak p}},\]
o\`u $\supp(M)$ d\'esigne l'ensemble des id\'eaux premiers de hauteur $1$ de $\Lambda$ disjoints de l'id\'eal $(f_M(T))$ et $M_{\mathfrak p}$ le localis\'e de $M$ en $\mathfrak p$.
On d\'efinit alors :
\begin{defn}(\textbf{et Proposition.})

$\coker{\Psi_M}:=\beta(M)$ est le \textit{co-adjoint} de $M$.

$\ker{\Psi_M}:=M^0$ est le sous-$\Lambda$-module fini maximal de $M$.
\end{defn}

La notion de suite \textit{admissible} nous permet d'obtenir des repr\'esentation plus explicites du co-adjoint et du sous-module fini maximale de $M$.

\begin{defn}
Une suite $\{\pi_n\}_{n\geq 0}$ d'\'el\'ements non nuls de $\Lambda$ est \textit{$M$-admissible} si
\begin{enumerate}
\item $\pi_{0}\in (p,T)$ et pour tout $n\geq 1$ on a $\pi_{n+1}\in \pi_{n}(p,T)$.
\item Les diviseurs $\pi_n$ et $car(M)$ sont \'etrangers (i.e. $M/\pi_{n}$ est fini).
\end{enumerate}
\end{defn}

\begin{thm}\label{thm-suiteadmissible}
Soit $M$ un $\Lambda$-module de torsion et de type fini et $\{\pi_n\}_{n\geq 0}$ une suite $M$-admissible. Alors on a un isomorphisme de $\Lambda$-modules
$$\beta(M)\simeq \varinjlim M/\pi_nM.$$
\end{thm}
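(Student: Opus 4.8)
The goal is to identify the co-adjoint $\beta(M) = \coker \Psi_M$ with the direct limit $\varinjlim M/\pi_n M$ over an $M$-admissible sequence. The plan is to work up to pseudo-isomorphism, reducing to the elementary modules in the structure theorem, then to compute both sides separately on each elementary piece and check they agree; the key technical point is controlling the finite kernel and cokernel that a pseudo-isomorphism introduces.

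First I would observe that since $M$ is of torsion and finite type, the structure theorem gives a pseudo-isomorphism $M \to E := \bigoplus_i \Lambda/(f_i)^{n_i} \oplus \bigoplus_j \Lambda/(p)^{m_j}$ with finite kernel and cokernel. I would check that both functors in play behave well under pseudo-isomorphism: the co-adjoint $\beta(-)$ is insensitive to finite submodules (indeed $\beta(M) = \beta(M/M^0)$ since localizing at the height-one primes disjoint from $\supp$ kills the finite part), and on the direct-limit side the maps $M/\pi_n M \to M/\pi_{n+1} M$ are the natural projections, so a finite kernel or cokernel becomes negligible in the limit because the condition $\pi_{n+1} \in \pi_n(p,T)$ forces the transition maps to contract any fixed finite module eventually to zero. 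The crux here is verifying precisely this vanishing: a finite $\Lambda$-module is killed by a power of $(p,T)$, and since $\pi_n$ lies in higher and higher powers of $(p,T)$, the induced maps on the finite part are eventually zero, so they contribute nothing to $\varinjlim$.

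Having reduced to an elementary module $E$, the computation is local on each summand $\Lambda/g$ with $g$ a power of a prime element coprime to all $\pi_n$. For such a summand I would compute $\varinjlim (\Lambda/g)/\pi_n = \varinjlim \Lambda/(g,\pi_n)$. Because $\{\pi_n\}$ is admissible and coprime to $\mathrm{car}(M)$, each $\Lambda/(g,\pi_n)$ is finite, and the transition maps are multiplication by $\pi_{n+1}/\pi_n \in (p,T)$; identifying this direct limit with $E/\Psi_E$ amounts to the classical description of the co-adjoint as an Ext-dual, i.e. $\beta(\Lambda/g) \simeq \Lambda/g$ up to the appropriate twist, and the admissible sequence realizes exactly this quotient in the limit. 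I would assemble the summands using that both $\beta$ and $\varinjlim$ commute with finite direct sums.

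The main obstacle I expect is the bookkeeping around pseudo-isomorphisms: a pseudo-isomorphism need not be injective nor surjective, so I must argue on both the kernel and cokernel sides that the discrepancy dies in the limit, and simultaneously that it does not affect $\beta(M)$. The cleanest route is to first replace $M$ by $M/M^0$ (which changes neither side, since $M^0$ is finite and is annihilated in the limit by admissibility, while $\beta(M)=\beta(M/M^0)$ by definition of $\Psi_M$), and then use that $M/M^0$ injects into $E$ with finite cokernel; applying the snake lemma to the system $\{M/\pi_n M\} \to \{E/\pi_n E\}$ and passing to the exact direct limit yields the isomorphism, once the finite terms are shown to vanish as above.
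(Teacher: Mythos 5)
A preliminary remark: the paper itself gives no proof of this theorem --- it is recalled as background from Iwasawa's classical theory of (co-)adjoints (cf.\ Iwasawa, or NSW, ch.\ V, \S 5.5) --- so your proposal can only be measured against the standard argument; and on its own terms it has a genuine gap, located exactly at the step you yourself call the crux.

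Your reduction to an elementary module requires both sides to be insensitive to the pseudo-isomorphism $N:=M/M^0\hookrightarrow E$, whose defect is a finite \emph{cokernel} $C$. For finite \emph{kernels} your argument is correct: the offending terms $K/\pi_nK$ sit in homological degree $0$, their transition maps are multiplication by $\pi_m/\pi_n\in(p,T)^{m-n}$, and these eventually annihilate any fixed finite module. But for finite cokernels the error term sits in degree $1$: tensoring $0\to N\to E\to C\to 0$ with $\Lambda/\pi_n$ gives
$$\mathrm{Tor}_1^{\Lambda}(C,\Lambda/\pi_n)=C[\pi_n]\longrightarrow N/\pi_nN\longrightarrow E/\pi_nE\longrightarrow C/\pi_nC\longrightarrow 0,$$
and the transition maps on the Tor terms are \emph{not} multiplication by $\pi_m/\pi_n$: lifting $\Lambda/\pi_n\xrightarrow{\pi_m/\pi_n}\Lambda/\pi_m$ through the resolutions $0\to\Lambda\xrightarrow{\pi_n}\Lambda\to\Lambda/\pi_n\to 0$ forces the degree-one map to be the identity, so on $\mathrm{Tor}_1$ they are the inclusions $C[\pi_n]\subseteq C[\pi_m]$, whose limit is $C$, not $0$. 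Consequently $\varinjlim N/\pi_nN\to\varinjlim E/\pi_nE$ is surjective with kernel $C$ (all of $C$, because an elementary $E$ has $E[\pi_n]=0$). A concrete counterexample to your vanishing claim: $N=T\mathbb{F}_p[[T]]\subset E=\Lambda/p=\mathbb{F}_p[[T]]$ with $\pi_n=(1+T)^{p^n}-1\equiv T^{p^n}\bmod p$; the kernel of $N/\pi_nN\to E/\pi_nE$ is spanned by the class of $T^{p^n}$, and the transition map (multiplication by $T^{p^m-p^n}$) carries it to the class of $T^{p^m}$, which spans the kernel at level $m$, so this kernel survives in the limit. The co-adjoint has exactly the same defect: the snake lemma applied to $\Psi_N,\Psi_E$ yields $0\to C\to\beta(N)\to\beta(E)\to 0$, so $\beta$ is not insensitive to finite cokernels either. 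The two defects are the same $C$ --- which is why the theorem is nonetheless true --- but exploiting this coincidence requires a natural comparison map between the two sides, which you never construct; transporting each side separately to $E$ and comparing them there is not a valid argument.

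There is a second gap: even granting the reduction, the elementary case (the heart of the theorem on your route) is not proved but asserted via a false identification. One has $\beta(\Lambda/q^k)=\Lambda_{(q)}/\bigl(\Lambda+q^k\Lambda_{(q)}\bigr)$, a discrete, non--finitely generated module all of whose elements are $(p,T)$-power torsion (e.g.\ $\beta(\Lambda/p)=\mathbb{F}_p((T))/\mathbb{F}_p[[T]]$); it is the \emph{adjoint} $\mathrm{Ext}^1_{\Lambda}(\Lambda/g,\Lambda)$, not the co-adjoint, that is isomorphic to $\Lambda/g$. The standard proof avoids the structure theorem altogether: since $\pi_n$ is coprime to $f_M$, multiplication by $\pi_n$ is invertible on $L=\oplus_{\mathfrak p}M_{\mathfrak p}$, so $x\mapsto$ (class of $\pi_n^{-1}\Psi_M(x)$) gives well-defined maps $\theta_n:M/\pi_nM\to\beta(M)$ compatible with the transition maps; the snake lemma for multiplication by $\pi_n$ on $0\to\im\Psi_M\to L\to\beta(M)\to 0$ identifies $\im\Psi_M/\pi_n\,\im\Psi_M\simeq\beta(M)[\pi_n]$; the finite module $M^0$ dies in the limit (your correct argument); and surjectivity holds because $\beta(M)_{\mathfrak q}=0$ for every height-one prime $\mathfrak q$, so each element of $\beta(M)$ is killed by a power of $(p,T)$, hence by some $\pi_n\in(p,T)^{n+1}$. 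I would encourage you either to rewrite the proof along these lines, or, if you wish to keep the reduction to $E$, to construct the maps $\theta_n$ first and recover the general case from the elementary one by the five lemma applied to the two exact sequences above.
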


Enfin le sous-module fini maximal $M^0$ de $M$ se d\'ecrit de la mani\`ere suivante :
\begin{thm}Soient $M$ un $\Lambda$-module de type fini de torsion et $\{\pi_n\}$ une suite $M$-admissible.
Pour tout $m\geq n\geq 0,$
$$\ker\left(M/\pi_n  \stackrel{\frac{\pi_{m}}{\pi_n}}{\rightarrow}  M/\pi_m\right)\subseteq M^0/\pi_n.$$
De plus, pour tout $n\gg 0$ et tout $m\geq n$ suffisamment grand :
$$M^0\simeq\ker\left(M/\pi_n  \stackrel{\frac{\pi_{m}}{\pi_n}}{\rightarrow}  M/\pi_m\right).$$
\end{thm}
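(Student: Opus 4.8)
The plan is to make the left-hand kernel completely explicit and then to identify it with a torsion submodule that the structure theory forces inside $M^0$. First I would set $\alpha_{n,m}:=\pi_m/\pi_n$, which lies in $\Lambda$ — in fact in $(p,T)^{m-n}$ — by condition~1 of admissibility (the nesting $\pi_{k+1}\in\pi_k(p,T)$), and compute the kernel of multiplication by $\alpha_{n,m}$. A direct manipulation, namely that $\alpha_{n,m}x\in\pi_mM$ is equivalent to $\alpha_{n,m}(x-\pi_nw)=0$ for some $w\in M$, yields
$$\ker\!\left(M/\pi_n \stackrel{\alpha_{n,m}}{\to} M/\pi_m\right)=(M[\alpha_{n,m}]+\pi_nM)/\pi_nM,$$
where $M[\alpha]$ denotes the $\alpha$-torsion submodule of $M$.

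For the inclusion (valid for all $m\geq n\geq 0$), the point I would prove is that $M[\alpha_{n,m}]\subseteq M^0$. Since $\pi_m$ is prime to $f_M$ by admissibility, so is its divisor $\alpha_{n,m}$; hence the support of $M[\alpha_{n,m}]$, contained in $V(\alpha_{n,m})\cap\supp(M)$, meets no prime of height one and is reduced to the maximal ideal. As a finitely generated pseudo-null $\Lambda$-module is finite, $M[\alpha_{n,m}]$ is a finite submodule of $M$, hence contained in the maximal finite submodule $M^0$. Reducing modulo $\pi_nM$ gives the first assertion $\ker\subseteq M^0/\pi_n$, where $M^0/\pi_n$ is read as the image of $M^0$ in $M/\pi_nM$.

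For the isomorphism I would pin down two bounds valid for large indices, using that $M^0$ is finite, say annihilated by $(p,T)^N$. On the one hand $\alpha_{n,m}\in(p,T)^{m-n}$, so $\alpha_{n,m}M^0=0$ as soon as $m-n\geq N$; combined with the inclusion above this upgrades to the equality $M[\alpha_{n,m}]=M^0$. On the other hand I would show $M^0\cap\pi_nM=0$ for $n\gg 0$: if $x=\pi_ny\in M^0$, then the image of $y$ in $\tilde M:=M/M^0$ is killed by $\pi_n$; but $\pi_n$ is prime to $f_{\tilde M}=f_M$, so $\tilde M[\pi_n]$ is finite, whereas $\tilde M$ has no nonzero finite submodule by maximality of $M^0$. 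Hence $\bar y=0$, so $y\in M^0$ and $x\in\pi_nM^0=0$ once $n+1\geq N$. Putting the two bounds together, for $n\gg 0$ and $m-n\geq N$,
$$\ker=(M^0+\pi_nM)/\pi_nM\simeq M^0/(M^0\cap\pi_nM)=M^0.$$

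The structural input — that $M^0$ is exactly the finite (pseudo-null) part and that $M/M^0$ carries no nonzero finite submodule — is what makes each step go through. The only genuinely delicate bookkeeping I expect is the uniformity in $n$ and $m$: one must check that a single finiteness bound $(p,T)^NM^0=0$, together with $\pi_{k+1}\in\pi_k(p,T)$ and $\pi_0\in(p,T)$, simultaneously controls $\alpha_{n,m}M^0=0$ (via $m-n\geq N$) and $\pi_nM^0=0$ (via $n+1\geq N$). Everything else is formal.
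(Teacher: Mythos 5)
Your proof is correct and complete, and there is in fact no proof in the paper to compare it with: the theorem is stated there as a recalled standard fact of Iwasawa theory (the text passes directly to Corollaire~\ref{cor-noyaudecap}), so your argument supplies what the paper leaves to the reader. Each step checks out. The identification $\ker\bigl(M/\pi_n \rightarrow M/\pi_m\bigr)=(M[\alpha_{n,m}]+\pi_nM)/\pi_nM$ uses only $\pi_m=\alpha_{n,m}\pi_n$ and the fact that $\Lambda$ is a domain. Since $\alpha_{n,m}$ divides $\pi_m$, it is coprime to $f_M$, so $M[\alpha_{n,m}]$ has no height-one prime in its support; being finitely generated and pseudo-null over the two-dimensional regular local ring $\Lambda$ (with finite residue field), it is finite, hence contained in $M^0$ by maximality --- this gives the inclusion for all $m\geq n\geq 0$, with $M^0/\pi_n$ correctly read as the image of $M^0$ in $M/\pi_nM$. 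For the isomorphism, your two uniform bounds are exactly what is needed: if $(p,T)^NM^0=0$, then $\alpha_{n,m}\in(p,T)^{m-n}$ kills $M^0$ once $m-n\geq N$, upgrading the inclusion to $M[\alpha_{n,m}]=M^0$; and $M^0\cap\pi_nM=0$ once $n+1\geq N$, because for $x=\pi_ny\in M^0$ the class of $y$ in $M/M^0$ is killed by $\pi_n$, the module $(M/M^0)[\pi_n]$ is finite (same coprimality argument, using $f_{M/M^0}=f_M$), and $M/M^0$ has no nonzero finite submodule, so $y\in M^0$ and $x\in\pi_nM^0=0$. Together these give $\ker\simeq M^0/(M^0\cap\pi_nM)=M^0$ for $n\gg 0$ and $m\geq n$ large, as claimed.

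Two small points worth making explicit in your write-up. First, the isomorphism you produce, $M^0\rightarrow(M^0+\pi_nM)/\pi_nM$, is induced by the projection $M\rightarrow M/\pi_nM$; this is precisely the form required for the paper's Corollaire~\ref{cor-noyaudecap} to follow immediately, so it deserves a sentence. Second, you use $\supp$ in the usual commutative-algebra sense (primes at which the localization is nonzero), whereas the paper's displayed definition of $\supp(M)$ (height-one primes \emph{disjoints} de l'id\'eal $(f_M)$) reads differently and appears to be misstated; your argument needs the standard notion, so state which one you mean to avoid a clash with the surrounding text.
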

Il en r\'esulte imm\'ediatement le corollaire suivant :
\begin{corollaire}\label{cor-noyaudecap} Pour tout $n\gg 0$, la projection $M\rightarrow M/\pi_n$ induit un isomorphisme :
$$M^0 \simeq \ker\left(M/\pi_n \rightarrow \beta(M)\right).$$
\end{corollaire}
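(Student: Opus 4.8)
The plan is to deduce the corollary directly from the colimit description of the co-adjoint in Théorème~\ref{thm-suiteadmissible}, combined with the theorem immediately preceding it. First I would make the structure maps explicit: the admissibility condition $\pi_{n+1}\in\pi_n(p,T)$ forces $\pi_n\mid\pi_m$ for every $m\geq n$, so multiplication by $\pi_m/\pi_n$ descends to a well-defined transition map $M/\pi_n M\to M/\pi_m M$, and $\beta(M)\simeq\varinjlim M/\pi_n M$ is the filtered colimit taken along these maps. Under this identification the arrow $M/\pi_n\to\beta(M)$ appearing in the statement is exactly the structural map of the colimit, and the composite $M\twoheadrightarrow M/\pi_n\to\beta(M)$ is the map induced by the projection.

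The key step is the elementary description of the kernel of a structural map into a filtered colimit of modules: an element of $M/\pi_n$ vanishes in $\varinjlim M/\pi_m M$ if and only if it is already annihilated at some finite stage. This yields
$$\ker\bigl(M/\pi_n\rightarrow\beta(M)\bigr)=\bigcup_{m\geq n}\ker\left(M/\pi_n\stackrel{\frac{\pi_m}{\pi_n}}{\rightarrow}M/\pi_m\right).$$
Because $\frac{\pi_{m'}}{\pi_n}=\frac{\pi_{m'}}{\pi_m}\cdot\frac{\pi_m}{\pi_n}$ for $m'\geq m\geq n$, these kernels are nested and increasing in $m$, so the right-hand side is an increasing union.

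It then suffices to invoke the preceding theorem twice. Its first assertion places each kernel $\ker(M/\pi_n\stackrel{\frac{\pi_m}{\pi_n}}{\rightarrow}M/\pi_m)$ inside the \emph{finite} module $M^0/\pi_n$; hence the increasing union above must stabilize, and for $m$ large enough it equals a single one of these kernels. Its second assertion identifies that stable kernel with $M^0$ as soon as $n\gg0$ and $m$ is large. Combining the two gives $M^0\simeq\ker(M/\pi_n\to\beta(M))$ for $n\gg0$, which is the corollary. The only point needing a little attention is the passage from the colimit to a single finite stage, i.e. the stabilization of the union; but this is exactly what the inclusion into the finite module $M^0/\pi_n$ provides, $M^0$ being finite. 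Everything else is formal, which is why the statement is genuinely a corollary.
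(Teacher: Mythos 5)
Your proof is correct and takes essentially the same approach as the paper: the paper states this corollary as an immediate consequence of Théorème~\ref{thm-suiteadmissible} and the theorem just before the corollary, offering no further argument, and your derivation (the kernel of $M/\pi_n\to\beta(M)$ is the increasing union of the finite-stage kernels $\ker(M/\pi_n\to M/\pi_m)$, which stabilizes because each is contained in the finite module $M^0/\pi_n$, and the stable value is $M^0$ for $n\gg 0$) is exactly that intended deduction, spelled out.
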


\section{Les noyaux sauvages \'etales}
\subsection{Noyaux de localisation}
Pour tout $i\in \Z$, P. Schneider a introduit (cf. \cite{S})  les noyaux de localisation
$$\Sha_S^2(F,\Z_p(i+1)):=\ker(H^2(G_F^S,\Z_p(i+1))\rightarrow \oplus_{v\in S}H^2(F_v,\Z_p(i+1))).$$

Le groupe $\Sha_{S_p}^2(F,\Z_p(1))$ s'identifie canoniquement au groupe $A_F'$. Pour tout $i\geq 1$, le groupe $\Sha_S^2(F,\Z_p(i+1))$ s'identifie canoniquement \`a un sous-groupe de $\K_{2i}(\O_F^S)$. Ainsi $\Sha_S^2(F,\Z_p(i+1))$ est un $p$-groupe ab\'elien fini. Il ne d\'epend pas de l'ensemble $S$ (contenant $S_p\cup S_{\infty}$). On adopte alors la notation suivante :
$$\WK_{2i}(F):=\Sha_S^2(F,\Z_p(i+1)).$$
Le groupe $\WK_{2i}(F)$ est appel\'e \textit{$2i$-i\`eme noyau sauvage \'etale}. Cette appellation est due \`a T. Nguyen Quang Do (cf. \cite{N1}) ; elle est justifi\'ee par le fait que pour $i=1$, le groupe $\Sha_S^2(F,\Z_p(2))$ s'identifie, d'apr\`es les r\'esultats de Tate, \`a la $p$-partie du noyau sauvage usuel $WK_{2}(F)$. Noyaux sauvages et $K$-groupes pairs sont reli\'es par les suites exactes :
\begin{eqnarray*}\label{se-WK-K}
0\rightarrow \WK_{2i}(F) \rightarrow \K_{2i}(\O_F^S) \rightarrow \oplus_{v\in S}H^2(F_v,\Z_p(i+1))\rightarrow H^0(F,\Q_p/\Z_p(-i))^*\rightarrow 0.
\end{eqnarray*}

Noyaux sauvages et $p$-groupes de classes sont reli\'es par la proposition suivante :
\begin{prop}\label{prop-surj}
Fixons un entier $i\geq 1$. On suppose que $F$ contient $\mu_{2p}$ et qu'au moins un premier $p$-adique de $F$ est totalement ramifi\'ee dans $F_{\infty}/F$. Alors il existe une application surjective :
$$\WK_{2i}(F)/p\rightarrow  A_F'/p(i).$$
\end{prop}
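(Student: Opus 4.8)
The plan is to reduce modulo $p$ and to exploit that the hypothesis $\mu_{2p}\subset F$ makes the field totally imaginary and renders the Tate twists invisible on $\F_p$-coefficients. Since $\mu_p\subset F$ one has $\F_p(j)\simeq\F_p$ as $G_F$-modules for every $j$, hence a canonical identification of localisation kernels $\Sha_{S_p}^2(F,\F_p(i+1))\simeq\Sha_{S_p}^2(F,\F_p(1))(i)$, the twist serving only to record the coefficient. Being totally imaginary (again because $\mu_{2p}\subset F$), $F$ satisfies $\cd_p(G_F^{S_p})=2$, and likewise $\cd_p(F_v)=2$ at every finite place $v$. I take $S=S_p$ throughout, which is harmless since $\WK_{2i}(F)$ is independent of $S$.

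First I would reduce modulo $p$ via $0\to\Z_p(j)\xrightarrow{p}\Z_p(j)\to\F_p(j)\to0$. As $\cd_p=2$ globally (for $G_F^{S_p}$) and locally (for each $F_v$), the relevant $H^3$ vanishes and the boundary maps yield isomorphisms $H^2(G_F^{S_p},\Z_p(j))/p\xrightarrow{\sim}H^2(G_F^{S_p},\F_p(j))$ and $H^2(F_v,\Z_p(j))/p\xrightarrow{\sim}H^2(F_v,\F_p(j))$. Writing $\Psi_j$ for the localisation map with kernel $\Sha_{S_p}^2(F,\Z_p(j))$ and $B_j:=\im\Psi_j$, tensoring $0\to\Sha_{S_p}^2(F,\Z_p(j))\to H^2(G_F^{S_p},\Z_p(j))\xrightarrow{\Psi_j}B_j\to0$ with $\F_p$ shows that the reduction $\Sha_{S_p}^2(F,\Z_p(j))/p\to H^2(G_F^{S_p},\F_p(j))$ has image $\ker\bar\Psi_j$, where $\bar\Psi_j$ denotes $\Psi_j$ modulo $p$. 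Under the isomorphisms above $\ker\bar\Psi_j$ is a subgroup of $\Sha_{S_p}^2(F,\F_p(j))$, and the cokernel $C_j$ of this inclusion measures exactly the failure of $B_j/p\to\bigoplus_{v}H^2(F_v,\F_p(j))$ to be injective.

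Next I would treat the two relevant twists. For $j=1$ the local groups $H^2(F_v,\Z_p(1))\simeq\Z_p$ are torsion free, so $B_1$ is torsion free, $B_1/p$ injects into $\bigoplus_v\F_p$, and $C_1=0$; recalling $A_F'=\Sha_{S_p}^2(F,\Z_p(1))$ this gives an isomorphism $A_F'/p\simeq\Sha_{S_p}^2(F,\F_p(1))$. For $j=i+1$ the reduction provides a map $\WK_{2i}(F)/p\to\Sha_{S_p}^2(F,\F_p(i+1))$ with image $\ker\bar\Psi_{i+1}$; granting $C_{i+1}=0$ this map is onto, and composing with the twist identification and the case $j=1$ yields the sought surjection $\WK_{2i}(F)/p\to\Sha_{S_p}^2(F,\F_p(1))(i)\simeq A_F'/p(i)$.

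The hard point is the vanishing $C_{i+1}=0$, a purely local statement at the $p$-adic places. Contrary to the case $j=1$, for $v\mid p$ the group $H^2(F_v,\Z_p(i+1))$ is now finite, being dual to $H^0(F_v,\Q_p/\Z_p(-i))$, so $B_{i+1}/p\to\bigoplus_{v\mid p}H^2(F_v,\F_p(i+1))$ can a priori have a kernel, and the cokernel term $H^0(F,\Q_p/\Z_p(-i))^*$ of the fundamental exact sequence relating $\WK_{2i}(F)$ to $\K_{2i}(\O_F^S)$ may interfere. This is where I expect to use that some $p$-adic prime is totally ramified in $F_\infty/F$: it controls the local invariants at that prime and forces $B_{i+1}$ to be saturated modulo $p$, so that $\ker\bar\Psi_{i+1}$ exhausts $\Sha_{S_p}^2(F,\F_p(i+1))$. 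I would verify this by comparing the local and global corank data at the ramified prime, handling $p=2$ via the standing hypothesis $\sqrt{-1}\in F$, which kills any archimedean contribution.
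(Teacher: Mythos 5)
Your skeleton is viable, and every statement you reduce the proposition to is in fact true; note that the paper states Proposition \ref{prop-surj} without any proof, so your argument has to stand on its own. As written it does not, for two reasons, one minor and one at the decisive point. The minor one is the case $j=1$: from the torsion-freeness of $\bigoplus_v H^2(F_v,\Z_p(1))$ you deduce that $B_1$ is torsion free and conclude that $B_1/p$ injects into $\bigoplus_v\F_p$. Torsion-freeness of a subgroup never gives that: $p\Z_p\subset\Z_p$ is torsion free, yet $p\Z_p/p^2\Z_p\rightarrow\Z_p/p\Z_p$ is the zero map. What you need is that $B_1$ is \emph{saturated} in $\bigoplus_v H^2(F_v,\Z_p(1))$, i.e.\ that the quotient is torsion free, and this holds for a global reason: Poitou--Tate identifies $\coker\Psi_1$ with $H^0(F,\Q_p/\Z_p)^*\simeq\Z_p$. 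The distinction is not pedantic, because this argument is exactly the one that is unavailable for $j=i+1$, where the corresponding cokernel $H^0(F,\Q_p/\Z_p(-i))^*$ is finite and nonzero (as $\mu_p\subset F$); that is why $C_{i+1}$ is a genuine obstruction at all.

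The decisive problem is that the vanishing $C_{i+1}=0$ is the entire content of the proposition, and your text replaces its proof by a declaration of intent. Here is what actually has to be done. Set $M:=\bigoplus_{v\mid p}H^2(F_v,\Z_p(i+1))$, a \emph{finite} group for $i\geq1$, and $D:=H^0(F,\Q_p/\Z_p(-i))^*$, so that $0\rightarrow B_{i+1}\rightarrow M\rightarrow D\rightarrow 0$ is exact by Poitou--Tate. You already identified $C_{i+1}$ with $\ker(B_{i+1}/p\rightarrow M/p)$; the Tor sequence turns this into $C_{i+1}\simeq\coker\bigl({}_{p}M\rightarrow{}_{p}D\bigr)$, and by Pontryagin duality $C_{i+1}=0$ if and only if the natural map
$$H^0(F,\Q_p/\Z_p(-i))/p\longrightarrow\bigoplus_{v\mid p}H^0(F_v,\Q_p/\Z_p(-i))/p$$
is injective. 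These $H^0$'s are finite cyclic, of orders $p^{c+v_p(i)}$ and $p^{c_v+v_p(i)}$ respectively, where $\mu_{p^c}\subset F$ and $\mu_{p^{c_v}}\subset F_v$ exactly (the standing hypothesis $\sqrt{-1}\in F$ makes the same formulas valid at $p=2$); on the $v$-component the map mod $p$ is multiplication by $p^{c_v-c}$, hence is zero as soon as $c_v>c$. So you must exhibit one place $v_0\mid p$ with $c_{v_0}=c$, and this is precisely what the ramification hypothesis provides: if $v_0$ is totally ramified in $F_\infty/F$, its decomposition group $D_{v_0}$ in $\Gamma$ is all of $\Gamma$, and since the cyclotomic character factors through $\Gamma$ (because $\mu_{2p}\subset F$), one gets $\chi(G_{F_{v_0}})=\chi(D_{v_0})=\chi(\Gamma)=\chi(G_F)$, i.e.\ $c_{v_0}=c$. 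Two remarks on your wording: "corank data" is the wrong invariant (every group in sight is finite, of corank zero; it is the exact cyclotomic levels $c$, $c_v$ that decide the matter), and the step is not a "verification" you may postpone --- if every $p$-adic place splits completely in the first layer of $F_\infty/F$, the displayed map is identically zero and your strategy collapses, which is consistent with the fact that the proposition needs its hypothesis. That step \emph{is} the proof.

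Finally, the route suggested by the paper's own architecture bypasses all of this local bookkeeping: since $\mu_p\subset F$ one has $d=1$, so the th\'eor\`eme \ref{schneider} applies to every $i\geq 1$ and gives $\WK_{2i}(F)\simeq(X_{\infty}'(i))_{\Gamma}$, whence $\WK_{2i}(F)/p\simeq\bigl((X_{\infty}')_{\Gamma}/p\bigr)(i)$ because $\chi\equiv 1\bmod p$ on $\Gamma$. On the other hand, if some $p$-adic place is totally ramified in $F_\infty/F$ then $F_\infty\cap H_F'=F$, where $H_F'$ denotes the $p$-Hilbert class field $p$-d\'ecompos\'e of $F$, so restriction yields a surjection $(X_{\infty}')_{\Gamma}\rightarrow\Gal(H_F'F_\infty/F_\infty)\simeq A_F'$. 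Composing and twisting gives the surjection of the proposition in a few lines, with the two hypotheses entering exactly once each.
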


\subsection{Th\'eorie d'Iwasawa des noyaux sauvages}
On fait maintenant le lien entre les noyaux sauvages d\'efinis pr\'ec\'edemment et la th\'eorie d'Iwasawa.

Rappelons que $\Gamma=\Gal(F_{\infty}/F)$ o\`u $F_{\infty}/F$ est la $\Z_p$-extension cyclotomique et que pour tout $n\geq 0$, on note $\Gamma_n:=\Gal(F_{\infty}/F_n)=\Gamma^{p^n}$. On pose $E=F(\mu_p)$ et $\Delta = \Gal(E/F)$. Enfin on note $d$ l'ordre de $\Delta$. Pour simplifier, on pose $X_{\infty}':=X_{F_{\infty}}'$.

Par dualit\'e de Poitou-Tate et mont\'ee dans la $\Z_p$-extension cyclotomique P. Schneider donne une description du $2i$-i\`eme noyau sauvage de $E$ comme co-descendu d'un $\Lambda$-module (cf \cite{S}, \S 6 lemma 1). En consid\'erant les co-invariants sous l'action de $\Delta$ on obtient le th\'eor\`eme suivant :

\begin{thm}\label{schneider}
Pour tout entier positif $i$ non nul tel que $i \equiv 0 \mod d$, il existe un isomorphisme canonique :
$$ \WK_{2i}(F) \simeq \left(X_{\infty}'(i)\right)_{\Gamma} $$
\end{thm}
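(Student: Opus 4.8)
The plan is to deduce the statement from Schneider's description over the field $E=F(\mu_p)$, which contains $\mu_p$, and then to descend from $E$ to $F$ along $\Delta=\Gal(E/F)$, whose order $d$ is prime to $p$. Write $E_{\infty}$ for the cyclotomic $\Z_p$-extension of $E$, so that $E_{\infty}=E\cdot F_{\infty}$, that $\Gamma_E:=\Gal(E_{\infty}/E)\simeq\Gamma$, that $\Gal(E_{\infty}/F_{\infty})\simeq\Delta$, and that $\Gal(E_{\infty}/F)\simeq\Delta\times\Gamma$ (an internal direct product, $\Delta$ being the prime-to-$p$ part and $\Gamma$ the $\Z_p$-part). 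Schneider's lemma (\cite{S}, \S 6 lemma 1), applied to $E$, yields a canonical isomorphism of the shape $\WK_{2i}(E)\simeq (X_{E_{\infty}}'(i))_{\Gamma_E}$, valid for every $i\geq 1$ since $E\supset\mu_p$. The whole task is then to take $\Delta$-coinvariants and to identify both sides with the objects attached to $F$.

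First I would treat the left-hand side. Since $d=[E:F]$ is prime to $p$ and $\WK_{2i}=\Sha^2_S(\,\cdot\,,\Z_p(i+1))$ is computed from continuous cohomology with $p$-power torsion coefficients, the restriction map is injective with image the $\Delta$-invariants and is compatible with the localisation maps defining $\Sha^2_S$; hence $\WK_{2i}(F)\simeq\WK_{2i}(E)^{\Delta}$. As $\Delta$ has order prime to $p$, the idempotent $e_{\Delta}=\tfrac1d\sum_{\delta\in\Delta}\delta$ identifies invariants with coinvariants, so that $\WK_{2i}(F)\simeq e_{\Delta}\,\WK_{2i}(E)\simeq\WK_{2i}(E)_{\Delta}$ canonically.

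Next I would transport $e_{\Delta}$ through Schneider's isomorphism. Because $\Delta$ and $\Gamma$ commute inside $\Gal(E_{\infty}/F)$ and $e_{\Delta}$ is $\Z_p$-linear, it commutes with the formation of $\Gamma_E$-coinvariants, and taking coinvariants in stages gives
$$\WK_{2i}(F)\simeq\big(e_{\Delta}\,(X_{E_{\infty}}'(i))\big)_{\Gamma}.$$
Here the hypothesis $i\equiv 0\pmod d$ is decisive: $\Delta$ acts on $\Z_p(i)$ through $\chi^{i}$, where $\chi$ is the cyclotomic character, and $\chi|_{\Delta}$ is faithful, hence of order exactly $d$; thus $d\mid i$ forces $\Z_p(i)$ to be the \emph{trivial} $\Delta$-module, so the twist commutes with $e_{\Delta}$ and $e_{\Delta}(X_{E_{\infty}}'(i))\simeq (e_{\Delta}X_{E_{\infty}}')(i)\simeq (X_{E_{\infty}}')_{\Delta}(i)$.

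It remains to identify $(X_{E_{\infty}}')_{\Delta}$ with $X_{\infty}'=X_{F_{\infty}}'$. This is the descent of the maximal abelian unramified, $p$-d\'ecompos\'ee pro-$p$ module along the prime-to-$p$ extension $E_{\infty}/F_{\infty}$: compositum with $E_{\infty}$ carries the defining extension of $X_{F_{\infty}}'$ to an unramified, $p$-d\'ecompos\'ee extension of $E_{\infty}$, and since $d$ is prime to $p$ the norm and transfer maps realise $X_{F_{\infty}}'$ as the $\Delta$-coinvariants (equivalently invariants) of $X_{E_{\infty}}'$. I expect this last identification to be the main technical point, as one must check that the $p$-splitting and unramifiedness conditions are preserved in both directions at the primes above $p$, where $E_{\infty}/F_{\infty}$ is tamely ramified; the prime-to-$p$ degree makes all the relevant $\Delta$-cohomology vanish, which should keep the comparison clean. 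Combining the four isomorphisms and invoking the canonicity of each step yields the canonical isomorphism $\WK_{2i}(F)\simeq (X_{\infty}'(i))_{\Gamma}$.
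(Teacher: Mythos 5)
Your proof is correct and takes essentially the same route as the paper: the paper's entire justification is the one sentence preceding the theorem, namely that Schneider's lemma (\cite{S}, \S 6, lemma 1) gives the description over $E=F(\mu_p)$ and that one then passes to coinvariants under $\Delta=\Gal(E/F)$. Your write-up simply supplies the details the paper leaves implicit — the prime-to-$p$ identification $\WK_{2i}(F)\simeq \WK_{2i}(E)_{\Delta}$, the commutation of $\Delta$- and $\Gamma$-coinvariants, the triviality of the $\Delta$-action on $\Z_p(i)$ when $d\mid i$, and the descent $(X_{E_{\infty}}')_{\Delta}\simeq X_{F_{\infty}}'$ — all of which are the intended content of that sentence.
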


Posons $\WK_{2i}(F_{\infty}):=\varinjlim \WK_{2i}(F_n)$, o\`u la limite est prise sur les morphismes d'extensions. 

On tire du th\'eor\`eme \ref{schneider} la description \`a l'infini suivante :
\begin{prop}\label{isom-description}
Si $\mu_{X_{\infty}'} = 0$ alors pour tout entier positif $i$ non nul tel que $i \equiv 0 \mod d$, on a un isomorphisme canonique :
$$\WK_{2i}(F_{\infty}) \simeq X_{\infty}' \otimes \Q_p/\Z_p(i).$$
\end{prop}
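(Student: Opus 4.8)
The plan is to pass from the finite-level description of Theorem~\ref{schneider} to the limit by first identifying $\WK_{2i}(F_\infty)$ with the co-adjoint $\beta(X_\infty'(i))$, and then to compute this co-adjoint under the hypothesis $\mu_{X_\infty'}=0$ by means of a well-chosen admissible sequence. Throughout I write $M:=X_\infty'(i)$.

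First I would apply Theorem~\ref{schneider} not over $F$ but over each layer $F_n$. The cyclotomic $\Z_p$-extension of $F_n$ is again $F_\infty$, the associated Iwasawa module is the same $X_\infty'$, and $\Gamma$ is replaced by $\Gamma_n=\Gal(F_\infty/F_n)$. Since $F_n/F$ is a $p$-extension while $[F(\mu_p):F]=d$ is prime to $p$, one has $F_n\cap F(\mu_p)=F$ and hence $[F_n(\mu_p):F_n]=d$; the congruence $i\equiv 0 \bmod d$ is therefore preserved at every layer, and Theorem~\ref{schneider} applied over $F_n$ yields canonical isomorphisms
$$\WK_{2i}(F_n)\simeq \bigl(X_\infty'(i)\bigr)_{\Gamma_n}=M/\omega_n M,$$
where $\omega_n=(1+T)^{p^n}-1$ generates the augmentation ideal $I_{\Gamma_n}$.

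Next I would pass to the direct limit $\WK_{2i}(F_\infty)=\varinjlim_n\WK_{2i}(F_n)$ taken along the extension maps $e_{2,i}$, which are restriction maps in cohomology. The main technical step is to check that, under the isomorphisms above, these extension maps become multiplication by $\nu_{n+1,n}:=\omega_{n+1}/\omega_n=\sum_{j=0}^{p-1}(1+T)^{jp^n}$ on the coinvariants $M_{\Gamma_n}=H_0(\Gamma_n,M)$; this is the algebraic transfer attached to the inclusion $\Gamma_{n+1}\subset\Gamma_n$, and it follows from the naturality of Schneider's co-descent isomorphism in the tower. I would then verify that $\{\omega_n\}_{n\ge 0}$ is $M$-admissible: one has $\omega_0=T\in(p,T)$, and $\nu_{n+1,n}\in(p,T)$ since its value at $T=0$ equals $p$, which gives condition~(1); the finiteness of the étale $K$-groups forces $M/\omega_n M\simeq\WK_{2i}(F_n)$ to be finite, which gives condition~(2). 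Theorem~\ref{thm-suiteadmissible} then identifies
$$\WK_{2i}(F_\infty)\simeq\varinjlim_n M/\omega_n M\simeq\beta(M).$$

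Finally I would exploit the hypothesis $\mu_{X_\infty'}=0$, which forces $\mu_M=0$ because the Tate twist leaves the $\mu$-invariant unchanged, to recompute $\beta(M)$ with a second admissible sequence. Since $\mu_M=0$, the characteristic series $\mathrm{car}(M)$ is prime to $p$, so $M/p^{n+1}M$ is finite for every $n$, and the sequence $\pi_n=p^{n+1}$ is $M$-admissible. Applying Theorem~\ref{thm-suiteadmissible} a second time gives
$$\beta(M)\simeq\varinjlim_n M/p^{n+1}M=M\otimes\Q_p/\Z_p,$$
the transition maps on the right being multiplication by $p=\pi_{n+1}/\pi_n$, which is exactly the system computing $M\otimes\Q_p/\Z_p=\varinjlim_n M\otimes\Z/p^{n+1}$. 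Combining the two computations of $\beta(M)$ yields the canonical isomorphism $\WK_{2i}(F_\infty)\simeq M\otimes\Q_p/\Z_p=X_\infty'\otimes\Q_p/\Z_p(i)$. The delicate point is the identification of the extension maps with multiplication by $\nu_{n+1,n}$; once $\mu_M=0$ makes $\{p^{n+1}\}$ admissible, the remainder is a formal consequence of the admissible-sequence machinery.
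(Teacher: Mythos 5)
Your proposal is correct and follows essentially the same route as the paper: both compute the co-adjoint $\beta(X_\infty'(i))$ twice via Theorem~\ref{thm-suiteadmissible}, once with the admissible sequence $\{(1+T)^{p^n}-1\}$ (whose admissibility comes from the finiteness of the $\WK_{2i}(F_n)$) to recover $\WK_{2i}(F_\infty)$, and once with the sequence of powers of $p$ (admissible precisely because $\mu_{X_\infty'}=0$) to recover $X_\infty'\otimes\Q_p/\Z_p(i)$. Your treatment is in fact slightly more careful than the paper's, since you make explicit the compatibility of the extension maps with multiplication by $\omega_{n+1}/\omega_n$ and the preservation of the congruence $i\equiv 0 \bmod d$ at each layer, points the paper leaves implicit.
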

\begin{proof}
L'hypoth\`ese $\mu_{X_{\infty}'} = 0$ implique que la suite $\{p^n\}$ est $X_{\infty}'(i)$-admissible. Donc d'apr\`es le th\'eor\`eme \ref{thm-suiteadmissible}, on a l'isomorhisme :
\begin{eqnarray*}
\beta(X_{\infty}'(i)) & \simeq & \varinjlim (X_{\infty}'(i))/p^n\\
                      & =      & X_{\infty}'\otimes \Q_p/\Z_p(i).
\end{eqnarray*}
D'autre part, la finitude de $\WK_{2i}(F_n) \simeq \left(X_{\infty}'(i)\right)_{\Gamma_n}$, montre que $\{(1+T)^{p^n}-1\}$ est une suite $X_{\infty}'(i)$-admissible. Donc
\begin{eqnarray*}
\beta(X_{\infty}'(i)) & \simeq & \varinjlim (X_{\infty}'(i))_{\Gamma_n}\\
                      & :=     & \WK_{2i}(F_{\infty}).
\end{eqnarray*} 
\end{proof}

Rappelons enfin la description classique des noyaux de capitulation dans la $\Z_p$-extension cyclotomique. Pour tout $n\geq 0$, et tout entier positif non nul $i \equiv 0 \mod d$, on a
\begin{eqnarray*}
\Cap_{i}(F_n/F) &    =   &\ker(\WK_{2i}(F)\rightarrow \WK_{2i}(F_n))       \\
                  & \simeq &\ker((X_{\infty}'(i))_{\Gamma}\rightarrow (X_{\infty}'(i))_{\Gamma_n})\\
                  & \subseteq & \left((X_{\infty}')^0(i)\right)_{\Gamma}
\end{eqnarray*}
On pose $\Cap_i(F_n) = \ker(\WK_{2i}(F_n)\rightarrow \WK_{2i}(F_{\infty}))$. Alors pour tout $n$ suffisamment grand, le corollaire \ref{cor-noyaudecap} nous donne :
\begin{eqnarray*}
\Cap_{i}(F_n) & \simeq & (X_{\infty}')^0(i)
\end{eqnarray*}

\begin{prop}\label{desc-cycl}Les propositions suivantes sont equivalentes :

(i) Le $\Z_p$-module $X_{\infty}'=(\G_{\infty}')^{ab}$ est ab\'elien libre.

(ii) L'invariant $\mu_{X_{\infty}'}$ est nul et pour tout entier positif non nul $i$ tel que $i\equiv 0 \mod d$, on a la descente galoisienne
$$\WK_{2i}(F)\simeq \WK_{2i}(F_{\infty})^{\Gamma}.$$
\end{prop}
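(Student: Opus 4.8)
# Proof Proposal for Proposition \ref{desc-cycl}

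The plan is to prove the equivalence by exploiting the description of the wild kernels as co-invariants (Theorem \ref{schneider}) together with the characterization of the maximal finite submodule via admissible sequences. The key observation is that the two conditions in (i) — namely $\mu_{X_\infty'}=0$ and $X_\infty'$ being $\Z_p$-free — must be decoupled and matched against the two pieces of information encoded in the Galois descent map $\WK_{2i}(F)\to\WK_{2i}(F_\infty)^\Gamma$.

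First I would establish the direction (i) $\Rightarrow$ (ii). Assuming $X_\infty'$ is $\Z_p$-free (in particular finitely generated over $\Z_p$), one immediately gets $\mu_{X_\infty'}=0$. Moreover, $\Z_p$-freeness forces the maximal finite submodule $(X_\infty')^0$ to be trivial: any nonzero finite $\Lambda$-submodule would contain $p$-torsion, contradicting freeness. The descent statement then follows from the capitulation computation recorded just before the proposition, namely
\begin{equation*}
\Cap_i(F_n)=\ker\bigl(\WK_{2i}(F_n)\to\WK_{2i}(F_\infty)\bigr)\simeq (X_\infty')^0(i).
\end{equation*}
Taking $n=0$ and using $(X_\infty')^0=0$, the extension map $\WK_{2i}(F)\to\WK_{2i}(F_\infty)$ is injective. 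With $\mu_{X_\infty'}=0$, Proposition \ref{isom-description} gives $\WK_{2i}(F_\infty)\simeq X_\infty'\otimes\Q_p/\Z_p(i)$, whose $\Gamma$-invariants I would compute and match with $\WK_{2i}(F)\simeq(X_\infty'(i))_\Gamma$ from Theorem \ref{schneider}; the $\Z_p$-freeness ensures the natural map from co-invariants to invariants of the divisible hull is an isomorphism, yielding the desired identification $\WK_{2i}(F)\simeq\WK_{2i}(F_\infty)^\Gamma$.

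For the converse (ii) $\Rightarrow$ (i), I would assume $\mu_{X_\infty'}=0$ together with the descent isomorphism and deduce $\Z_p$-freeness, i.e. $(X_\infty')^0=0$. The strategy is to run the chain of isomorphisms in reverse: from $\mu_{X_\infty'}=0$, Proposition \ref{isom-description} identifies $\WK_{2i}(F_\infty)$ with $X_\infty'\otimes\Q_p/\Z_p(i)$, and the descent hypothesis says the extension map $\WK_{2i}(F)\to\WK_{2i}(F_\infty)^\Gamma$ is an isomorphism, hence in particular injective. By the capitulation formula this injectivity forces $\Cap_i(F_0)\simeq(X_\infty')^0(i)=0$, and since twisting by $\Z_p(i)$ is an isomorphism on finite modules, $(X_\infty')^0=0$. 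A finitely generated torsion $\Lambda$-module with $\mu=0$ and trivial maximal finite submodule is $\Z_p$-free, giving (i).

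The main obstacle I anticipate is the precise comparison between co-invariants and invariants — that is, verifying that the extension map $(X_\infty'(i))_\Gamma\to(X_\infty'\otimes\Q_p/\Z_p(i))^\Gamma$ is an isomorphism exactly when $(X_\infty')^0=0$, rather than merely injective. This requires controlling the $\Gamma$-cohomology of $X_\infty'(i)$ carefully, since the discrepancy between $\WK_{2i}(F)^\Gamma$ and $\WK_{2i}(F_\infty)^\Gamma$ is governed precisely by the finite submodule through the corollary \ref{cor-noyaudecap}. I would need to check that the hypothesis $i\equiv 0\bmod d$ guarantees the $\Delta$-descent in Theorem \ref{schneider} introduces no extra kernel or cokernel, so that all finiteness and freeness information transfers cleanly between $X_\infty'$ and the wild kernels.
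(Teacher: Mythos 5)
Your direction (i) $\Rightarrow$ (ii) is essentially the paper's argument and is sound: freeness gives $\mu_{X_{\infty}'}=0$ and $(X_{\infty}')^0=0$, and triviality of $(X_{\infty}')^0$ kills capitulation at the bottom level. Note, though, that what justifies this last point is the inclusion $\Cap_i(F_n/F)\subseteq \left((X_{\infty}')^0(i)\right)_{\Gamma}$, valid for \emph{all} $n\geq 0$, not the isomorphism $\Cap_i(F_n)\simeq (X_{\infty}')^0(i)$ that you quote ``at $n=0$'': that isomorphism is established (via the corollaire \ref{cor-noyaudecap}) only for $n\gg 0$. In this direction the misquotation is harmless; your order count replacing the surjectivity statement of [LMN, Lemma 1.1] (which is what the paper cites) is also legitimate. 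The converse is where the proposal breaks down. Your key step reads: ``this injectivity forces $\Cap_i(F_0)\simeq(X_{\infty}')^0(i)=0$.'' At level $n=0$ one only knows that the capitulation kernel is \emph{contained} in (the image of) the finite submodule, which is the implication you used for (i) $\Rightarrow$ (ii) and exactly the wrong direction here: triviality of capitulation at level $0$ does not formally imply $(X_{\infty}')^0=0$.

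This gap cannot be closed by pure $\Lambda$-module theory. Take $p$ odd and $M=\Lambda/(T^2,pT)$. Then $M^0=TM\simeq \Z/p\neq 0$, while $M_{\Gamma}=M/TM\simeq \Z_p$ and $M_{\Gamma_n}=M$ for $n\geq 1$; the transition map $M_{\Gamma}\rightarrow M_{\Gamma_n}$, i.e.\ multiplication by $\nu_n=((1+T)^{p^n}-1)/T$, sends the class of $a$ to $p^n a$ and is therefore injective for every $n$, so that $\ker\left(M_{\Gamma}\rightarrow \beta(M)\right)=0$ although $M^0\neq 0$. (The kernel $\simeq M^0$ only appears at the levels $n\geq 1$, in accordance with the $n\gg 0$ statement.) So a module playing the role of $X_{\infty}'(i)$ could a priori satisfy the descent hypothesis at the base field and still fail to be $\Z_p$-free. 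The paper bridges precisely this gap with an arithmetic input that your proposal never invokes, namely Proposition \ref{cap-nul}: triviality of $\ker(\WK_{2i}(F)\rightarrow \WK_{2i}(F_{\infty}))$ propagates to triviality of $\ker(\WK_{2i}(F_n)\rightarrow \WK_{2i}(F_{\infty}))$ for all $n$, and only then can corollaire \ref{cor-noyaudecap} be applied at some $n\gg 0$ to conclude $(X_{\infty}')^0=0$. Without this propagation step (or an equivalent arithmetic argument), your proof of (ii) $\Rightarrow$ (i) is incomplete.
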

\begin{proof}
La fl\^eche $\WK_{2i}(F)\rightarrow \WK_{2i}(F_{\infty})^{\Gamma}$ est toujours surjective (cf. \cite[Lemma 1.1]{LMN}).\\
Regardons l'injectivit\'e. D'apr\`es la proposition \ref{cap-nul}, le noyau $\Cap_i(F)$ est nul si et seulement si $\Cap_i(F_n)$ est nul pour tout $n$ ; c'est donc \'equivalent \`a la trivialit\'e de $(X_{\infty}')^0$.
Enfin, $X_{\infty}'$ est $\Z_p$-libre si et seulement s'il n'a pas de $\Z_p$-torsion. Ceci est \'equivalent \`a la trivialit\'e de $(X_{\infty}')^0$ et de $\mu_{X_{\infty}'}$. 
\end{proof}

\textbf{Remarques.}
\begin{itemize}
\item La proposition \ref{desc-cycl} \'etablit un lien entre la structure de $\G_{\infty}'$ et la descente galoisienne dans la classe des extensions cyclotomiques. Dans la suite, le th\'eor\`eme \ref{thm-desc-loc-cycl} examinera le cas o\`u la descente est \'etendue \`a la classe des extensions \textit{localement} cyclotomiques 
\item Dans l'assertion (ii) de la proposition pr\'ec\'edente, on peut remplacer le quantificateur \textit{pour tout entier} par \textit{il existe un entier}.
\end{itemize}

\section{Sur la pro-$p$-libert\'e de $\G_{F_{\infty}}'$}
\subsection{Extensions localement cyclotomiques}
Notre but dans cette partie est d'\'etudier le comportement galoisien des noyaux sauvages \'etales et de mettre en \'evidence le r\^ole particulier jou\'e par les extensions \textit{localement cyclotomiques}. Pour cela, rappelons quelques d\'efinitions et notations.\\
Pour tous les corps $N$ consid\'er\'es, on note $N_{\infty}$ la $\Z_p$-extension cyclotomique de $N$.
\begin{defn}
Une (pro-)$p$-extension $L/F$ de corps de nombres est \textit{localement cyclotomique} si pour toute place finie $v$ de $F$ et toute place $w|v$ de $L$ on a l'inclusion :
$$L_w\subseteq (F_v)_{\infty}.$$
\end{defn}

\textbf{Remarques.}
\begin{enumerate}
\item Dans la terminologie introduite par J.-F. Jaulent (cf. \cite{J3}), les extensions localement cyclotomiques sont exactement les extensions non \textit{logarithmiquement} ramifi\'ees.
\item Une extension localement cyclotomique est $p$-ramifi\'ee.
\end{enumerate}

Consid\'erons $\mathcal L_{F_{\infty}}'=\bigcup_{F\subset L}L ,$ o\`u $L$ parcourt la classe des $p$-extensions localement cyclotomiques de $F$.

L'extension $\mathcal L_{F_{\infty}}'/F$ est une pro-$p$-extension qui contient $F_{\infty}/F$. Dans \cite{JS} les auteurs proposent une construction "explicite" de $\mathcal L_{F_{\infty}}'/F$ au moyen de la tour localement cyclotomique de $F$.

Notons que $\G_{F_{\infty}}'=\Gal(\mathcal L_{F_{\infty}}'/F_{\infty}).$

On pose $\Delta:=\Gal(F(\mu_p)/F)$ (resp. $\Delta_v:=\Gal(F_v(\mu_p)/F_v)$) et $d$ (resp. $d_v$) son ordre.
 
Comme cela est montr\'e dans \cite[Proposition 2.3]{KM} ou dans \cite[Proposition 10]{JM}, et contrairement aux $K$-groupes pairs, le morphisme de norme $$N_{L/F,i} : \WK_{2i}(L)\rightarrow \WK_{2i}(F)$$ n'est pas tout le temps surjectif. C'est exactement dans le cas des extensions localement cyclotomiques (mais non cyclotomiques) que la surjectivit\'e est mise en d\'efaut.

Int\'eressons-nous au morphisme d'extension
$$\WK_{2i}(F)\rightarrow \WK_{2i}(L)^G$$
dans le cas des $p$-extensions localement cyclotomiques.

La suite exacte \ref{se-WK-K} permet de comparer les noyaux de capitulation des noyaux sauvages \'etales et des $K$-groupes pairs dans les extensions localement cyclotomique :

\begin{prop}\label{desc-loc-cycl}
Soit $L/F$ une $p$-extension localement cyclotomique de groupe de Galois $G$. On a l'\'egalit\'e :
$$\Cap_i(L/F)=\ker(\WK_{2i}(F)\rightarrow \WK_{2i}(L)).$$
De plus, on a l'in\'egalit\'e entre les ordres
$$|\WK_{2i}(F)|\geq |\WK_{2i}(L)^G|.$$
En particulier, l'extension $\WK_{2i}(F)\rightarrow \WK_{2i}(L)^G$ est injective si et seulement si elle est bijective.
\end{prop}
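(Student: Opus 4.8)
The whole statement will follow from a single local injectivity property, which I expect to be the main obstacle. \textbf{Local Lemma:} for every finite place $v$ of $F$ and every $w\mid v$ in $L$, the restriction $\res_{w}\colon H^2(F_v,\Z_p(i+1))\to H^2(L_w,\Z_p(i+1))$ is injective. To prove it I would pass to local Tate duality: on $H^2(\cdot,\Z_p(i+1))$ this restriction is dual to the corestriction $\cor\colon H^0(L_w,\Q_p/\Z_p(-i))\to H^0(F_v,\Q_p/\Z_p(-i))$, so injectivity of $\res_w$ is equivalent to surjectivity of $\cor$. The group $H^0(K,\Q_p/\Z_p(-i))$ is finite cyclic, of order $p^{c_K}$ measuring the depth of the character $\chi^{-i}$ inside $1+p\Z_p$ on $G_K$; when $c_{F_v}=0$ the source $H^2(F_v,\Z_p(i+1))$ already vanishes and there is nothing to prove. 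The locally cyclotomic hypothesis enters precisely here: it says $L_w\subseteq (F_v)_\infty$, so $\Gal(L_w/F_v)$ is a finite cyclic layer of the cyclotomic $\Z_p$-tower, the corestriction on $H^0$ is multiplication by $N=\sum_{g}\chi(g)^{-i}$, and the identity $v_p(u^{p^s}-1)=v_p(u-1)+s$ (with $u=\chi(\gamma)^{-i}$) gives $v_p(N)=c_{L_w}-c_{F_v}$, which is exactly what makes $\cor$ surjective. The prime-to-$p$ places are identical (there $(F_v)_\infty/F_v$ is the unramified $\Z_p$-extension) and the archimedean places contribute nothing under the standing hypothesis at $p=2$.

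With the Local Lemma in hand the first equality is a diagram chase. One always has $\ker(\WK_{2i}(F)\to\WK_{2i}(L))=\WK_{2i}(F)\cap\Cap_i(L/F)$, so it suffices to show $\Cap_i(L/F)\subseteq\WK_{2i}(F)$. Given $x\in\Cap_i(L/F)\subseteq\K_{2i}(\O_F^S)$, its image in $\K_{2i}(\O_L^S)$ is zero, hence so is the image of its localizations in $\bigoplus_{w}H^2(L_w,\Z_p(i+1))$; the Local Lemma then forces the localizations of $x$ to vanish, and the description of $\WK_{2i}(F)$ as a localization kernel yields $x\in\WK_{2i}(F)$.

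For the inequality I would compare the two exact sequences $0\to\WK_{2i}(N)\to\K_{2i}(\O_N^S)\to C_N\to 0$ (with $C_N$ the image of localization) by mapping the $F$-sequence into the $G$-invariants of the $L$-sequence and applying the snake lemma to the vertical maps $a\colon\WK_{2i}(F)\to\WK_{2i}(L)^G$, $b=e_{2,i}$ and $c\colon C_F\to C_L^G$. The first equality gives $\ker a=\ker b=\Cap_i(L/F)$ with the induced map an isomorphism, while the Local Lemma makes $c$ injective; the snake sequence therefore degenerates to an injection $\coker a\hookrightarrow\coker b$. Now $\coker b=\coker(e_{2,i})\simeq\hat H^0(G,\K_{2i}(\O_L^S))$ and $\ker a\simeq\hat H^{-1}(G,\K_{2i}(\O_L^S))$ by the capitulation theorem. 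When $L/F$ is cyclic the finiteness of $\K_{2i}(\O_L^S)$ makes its Herbrand quotient trivial, so $|\coker b|=|\ker a|$ and hence $|\coker a|\le|\ker a|$; since $|\WK_{2i}(F)|=|\ker a|\cdot|\WK_{2i}(L)^G|/|\coker a|$, this is exactly $|\WK_{2i}(F)|\ge|\WK_{2i}(L)^G|$, the general case reducing to the cyclic one by dévissage along a chief series (sub-extensions of a locally cyclotomic extension remaining locally cyclotomic). The final assertion is then formal: if the extension map is injective then $|\WK_{2i}(F)|\le|\WK_{2i}(L)^G|$, which together with the inequality forces equality and hence bijectivity, the reverse implication being trivial.
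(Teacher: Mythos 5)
Your Local Lemma (injectivity of $\res_w\colon H^2(F_v,\Z_p(i+1))\to H^2(L_w,\Z_p(i+1))$, proved by dualizing to the corestriction on $H^0$ and using that $\Gal(L_w/F_v)$ sits inside the local cyclotomic tower), the resulting identity $\Cap_i(L/F)=\ker(\WK_{2i}(F)\to\WK_{2i}(L))$, and the cyclic case of the inequality (snake lemma between the two localization sequences, $\ker c=0$, plus triviality of the Herbrand quotient of the finite module $\K_{2i}(\O_L^S)$) are all correct. This is exactly the comparison the paper has in mind: it gives no explicit proof of Proposition \ref{desc-loc-cycl}, only the remark that the exact sequence relating $\WK_{2i}$ and $\K_{2i}$ ``permet de comparer'' the two capitulation kernels, and your first two steps flesh that out faithfully.

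The gap is the last clause: the general inequality $|\WK_{2i}(F)|\geq|\WK_{2i}(L)^G|$ does \emph{not} reduce to the cyclic case ``by d\'evissage along a chief series''. Take $H\triangleleft G$ of index $p$ and $M=L^H$. The cyclic case gives $|\WK_{2i}(F)|\geq|\WK_{2i}(M)^{G/H}|$ and induction gives $|\WK_{2i}(M)|\geq|\WK_{2i}(L)^H|$, but what you need is $|\WK_{2i}(M)^{G/H}|\geq\bigl|\bigl(\WK_{2i}(L)^H\bigr)^{G/H}\bigr|=|\WK_{2i}(L)^G|$, and an inequality of orders between finite $G/H$-modules is not inherited by invariants (a $G/H$-equivariant map $A\to B$ with $|A|\geq|B|$ can perfectly well have $|A^{G/H}|<|B^{G/H}|$). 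Equivalently, in your snake-lemma formulation the non-cyclic case requires $|\hat{H}^{-1}(G,\K_{2i}(\O_L^S))|\geq|\hat{H}^{0}(G,\K_{2i}(\O_L^S))|$, and this is not a formal property of finite modules over non-cyclic $p$-groups: for $G=(\Z/p)^2$ and $M=\F_p[G]/\F_p N_G$ (with $N_G$ the norm element) the norm map on $M$ is zero, $|M_G|=p$ and $|M^G|=p^2$, so $|\hat{H}^{-1}(G,M)|=p<p^2=|\hat{H}^{0}(G,M)|$. So for non-cyclic $G$ your argument only yields $|\coker a|\leq|\coker b|$, and the displayed inequality genuinely needs arithmetic input specific to these modules (this is the point where the paper is implicitly leaning on \cite{KM}), not group cohomology formalities.

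Note, however, that the consequence actually used later in the paper --- injectivity of $\WK_{2i}(F)\to\WK_{2i}(L)^G$ implies bijectivity, for every locally cyclotomic $p$-extension --- \emph{can} be obtained from your cyclic case by a correct d\'evissage: injectivity means $\Cap_i(L/F)=0$ by your first equality, hence $\Cap_i(M/F)=\Cap_i(L/M)=0$ by Proposition \ref{cap-nul}; the cyclic case then gives an isomorphism $\WK_{2i}(F)\simeq\WK_{2i}(M)^{G/H}$, induction gives a $G/H$-equivariant isomorphism $\WK_{2i}(M)\simeq\WK_{2i}(L)^H$, and isomorphisms --- unlike order inequalities --- do pass to invariants. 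As written, though, your proof establishes the equality of kernels in full, but the inequality of orders only when $G$ is cyclic.
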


Soit $L/F$ une extension localement cyclotomique, disjointe de $F_{\infty}/F$ cyclique de degr\'e $p$. D'apr\`es \cite[Proposition 2.3]{KM}, on sait que $|\coker(N_{L/F,i})|=p$. On a donc la minoration suivante pour les noyaux de capitulation lorsque $i\equiv 0 \mod d$:
$$|\Cap_i(L/F)|\geq \frac{p}{|\ker(N_{L/F,i})|}.$$

On v\'erifie facilement que $|\ker(N_{L/F,i})|= 1 \mbox{ o\`u }p$. Cependant, on ne dispose pas de formules de genres analogues \`a celles donn\'ees dans \cite{KM}, qui nous permettraient de d\'eterminer l'ordre du noyau pr\'ec\'edent dans le cas localement cyclotomique. Comme nous allons le voir dans ce qui suit, le comportement galoisien des noyaux sauvages dans de telles extensions est li\'e \`a la structure du pro-$p$-groupe $\G_{F_{\infty}}'$.

\subsection{Caract\'erisation de la pro-$p$-libert\'e}
On int\'eresse dans cette section au caract\`ere pro-$p$-libre du groupe $\G_{\infty}':=\G_{F_{\infty}}'$. Ce probl\`eme a d\'ej\`a \'et\'e abord\'e par diff\'erents auteurs (cf. \cite{N2}, \cite{O}, \cite{Sha} \cite{Wi}...). 

\textbf{Exemples.}
\begin{itemize}
\item Si $F$ contient $\mu_p$ et est un corps $p$-rationnel (ou $p$-r\'egulier, cf. \cite{GrJ}, \cite{Mo} et \cite{MN}) alors $\G_{\infty}'$ est trivial (donc pro-$p$-libre de rang $0$). Plus g\'en\'eralement $\G_{\infty}'=0$ si et seulement si $\WK_{2i}(F)=0$ pour un entier positif $i$ tel que $i\equiv 0 \mod d$.

\item Si $(X_{\infty}')^0=0$, $\mu(X_{\infty}')=0$ et $\lambda(X_{\infty}')=1$ alors $X_{\infty}'\simeq \Z_p$. Ainsi $\G_{\infty}'\simeq \Z_p$ (donc pro-$p$-libre de rang $1$). Ces conditions sont satisfaites pour les corps suivants (cf. \cite{W}) : $$F=\Q(\mu_p), \mbox{  avec  } p=37,59,67,101,103,131,149,233,257,263.$$
\end{itemize}

On ne connait pas d'exemple de corps de nombres tel que $\G_{\infty}'$ est pro-$p$-libre de rang $d(\G_{\infty}')$ sup\'erieur \`a $1$. Par contre, nous construirons dans la suite des couples $(F,p)$ avec $\G_{\infty}'$ non pro-$p$-libre.

\bigskip

Le th\'eor\`eme suivant caract\'erise la pro-$p$-libert\'e de  $\G_{\infty}'$. On peut trouver une preuve de $(i)\Rightarrow (ii)$ dans \cite[Proposition 4.5]{V}, et une preuve de la r\'eciproque (lorsque $F$ contient $\mu_{2p}$) dans \cite[th\'eor\`eme 3.1]{N3}. Nous proposons ici une d\'emonstration de l'\'equivalence, diff\'erente des deux pr\'ec\'edentes et qui se base essentiellement sur l'\'equivalence montr\'ee dans la proposition \ref{kawada2}.

Nous supposerons que l'invariant "mu" associ\'e \`a $F_{\infty}/F$ est trivial, de sorte que, pour toute $p$-extension $L/F$ localement cyclotomique, l'invariant "mu" associ\'e \`a $L_{\infty}/L$ est aussi trivial (cf. par exemple \cite[Theorem 11.3.8]{NSW}). C'est une cons\'equence de la forme faible de la conjecture de Leopoldt, qui est vraie pour la $\Z_p$-extension cyclotomique.

\medskip
Fixons un entier positif $i$ tel que $i\equiv 0 \mod d$ (on rappelle que $F$ contient $\mu_4$ lorsque $p=2$).
\'Etant donn\'ee une $p$-extension $M/L$ localement cyclotomique contenant $F$, on a le diagramme :
\begin{eqnarray}\label{diag-transfert}
\xymatrix{
\WK_{2i}(L_{\infty}) \ar[r] \ar[d]^{\simeq}                      & \WK_{2i}(M_{\infty}) \ar[d]^{\simeq}       \\
\Q_p/\Z_p(i)\otimes X_{L_{\infty}}'\ar[r]^{\mathcal V}           & \Q_p/\Z_p(i)\otimes X_{M_{\infty}}'
}
\end{eqnarray}
o\`u
\begin{itemize}
\item le morphisme $\mathcal V$ provient du transfert (il est d\'efini dans la proposition \ref{kawada2}),
\item les fl\`eches verticales sont les isomorphismes montr\'es dans la proposition \ref{isom-description},
\item la fl\`eche horizontale sup\'erieure provient par limite inductive des morphismes d'extension $$\WK_{2i}(L_n)\rightarrow \WK_{2i}(M_n).$$
\end{itemize}
\begin{lemme}
Le diagramme \ref{diag-transfert} est commutatif.
\end{lemme}
\begin{proof}
Les fl\`eches horizontales dans le diagramme \ref{diag-transfert} proviennent par dualit\'e de la co-restriction :
$$H^1(\G_{M_{\infty}}',\Q_p/\Z_p)\rightarrow H^1(\G_{L_{\infty}}',\Q_p/\Z_p).$$
\end{proof}
\begin{thm}\label{thm-desc-loc-cycl}
Les assertions suivantes sont \'equivalentes :

(i) Le groupe $\G_{F_{\infty}}'$ est pro-$p$-libre.

(ii) L'invariant $\mu_{X_{F_{\infty}}'}$ est nul et pour toute $p$-extension $L/F$ localement cyclotomique et tout entier $i\equiv 0 \mod d$, on a l'isomorphisme canonique :
$$\WK_{2i}(F)\stackrel{\simeq}{\rightarrow}\WK_{2i}(L)^{G},$$
o\`u $G=\Gal(L/F).$
\end{thm}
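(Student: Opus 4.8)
Le plan est d'appliquer la proposition~\ref{kawada2} au pro-$p$-groupe $G:=\G_{F_{\infty}}'$ en traduisant ses deux conditions dans le langage des noyaux sauvages. On dispose pour cela d'un dictionnaire entre sous-groupes ouverts de $G$ et extensions localement cyclotomiques de $F$. Si $U$ est un sous-groupe ouvert de $G$, son corps fixe est une extension finie de $F_{\infty}=\bigcup_n F_n$, donc descend \`a un \'etage fini : il s'\'ecrit $L_{\infty}$ pour une extension localement cyclotomique finie $L/F$, et l'on a $\mathcal L_{L_{\infty}}'=\mathcal L_{F_{\infty}}'$ (apr\`es mont\'ee dans la $\Z_p$-extension cyclotomique, une telle extension devient non ramifi\'ee et totalement d\'ecompos\'ee). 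Ainsi $U=\G_{L_{\infty}}'$, $U^{ab}=X_{L_{\infty}}'$, et un embo\^itement $V\triangleleft U$ correspond \`a une tour $F\subseteq L\subseteq M$ d'extensions localement cyclotomiques. Deux traductions sont alors disponibles : par la proposition~\ref{desc-cycl}, la $\Z_p$-libert\'e de $U^{ab}=X_{L_{\infty}}'$ \'equivaut \`a $\mu_{X_{L_{\infty}}'}=0$ et \`a $(X_{L_{\infty}}')^0=0$, c'est-\`a-dire \`a la descente cyclotomique $\Cap_i(L_n/L)=0$ pour tout $n$ ; par le lemme pr\'ec\'edent (diagramme~\ref{diag-transfert}), l'injectivit\'e du transfert $\mathcal V$ du couple $(U,V)$ \'equivaut \`a celle de l'application d'extension $\WK_{2i}(L_{\infty})\to\WK_{2i}(M_{\infty})$.

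Montrons $(i)\Rightarrow(ii)$. Si $G$ est pro-$p$-libre, $X_{F_{\infty}}'=G^{ab}$ est $\Z_p$-libre ; \'etant de type fini et de torsion sur $\Lambda$, il est alors de $\Z_p$-rang fini, d'o\`u $\mu_{X_{F_{\infty}}'}=0$ et $(X_{F_{\infty}}')^0=0$, donc $\Cap_i(F)=\ker(\WK_{2i}(F)\to\WK_{2i}(F_{\infty}))=0$. Fixons $L/F$ localement cyclotomique galoisienne, de sorte que $\G_{L_{\infty}}'\triangleleft G$. La proposition~\ref{kawada2}, appliqu\'ee au couple $U=G$, $V=\G_{L_{\infty}}'$, fournit l'injectivit\'e du transfert, que le diagramme~\ref{diag-transfert} (o\`u l'on prend pour $(L,M)$ le couple $(F,L)$) identifie \`a l'injectivit\'e de $\delta:\WK_{2i}(F_{\infty})\to\WK_{2i}(L_{\infty})$. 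Dans le carr\'e commutatif form\'e des applications d'extension $\alpha:\WK_{2i}(F)\to\WK_{2i}(L)$, $\beta:\WK_{2i}(F)\to\WK_{2i}(F_{\infty})$, $\gamma:\WK_{2i}(L)\to\WK_{2i}(L_{\infty})$ et $\delta$, on a $\ker\beta=\Cap_i(F)=0$ et $\delta$ injective ; la relation $\gamma\alpha=\delta\beta$ entra\^ine donc que $\alpha$ est injective, i.e. $\Cap_i(L/F)=0$. La proposition~\ref{desc-loc-cycl} transforme cette injectivit\'e en l'isomorphisme $\WK_{2i}(F)\stackrel{\simeq}{\rightarrow}\WK_{2i}(L)^{\Gal(L/F)}$ cherch\'e.

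Montrons $(ii)\Rightarrow(i)$. L'hypoth\`ese $\Cap_i(L/F)=0$ pour toute extension localement cyclotomique galoisienne, combin\'ee \`a la proposition~\ref{cap-nul}, donne $\Cap_i(M/L)=0$ pour toute tour $F\subseteq L\subseteq M$ de telles extensions (car $\Cap_i(M/F)=0$ et $\Cap_i(L/F)=0$). V\'erifions les deux conditions de la proposition~\ref{kawada2} sur la famille cofinale des sous-groupes $\G_{L_{\infty}}'$ avec $L/F$ galoisienne. Condition $(a)$ : $\mu_{X_{L_{\infty}}'}=0$ d\'ecoule de $\mu_{X_{F_{\infty}}'}=0$, et $(X_{L_{\infty}}')^0=0$ r\'esulte de $\Cap_i(L_n/L)=0$, valable car $L_n=LF_n$ et $L$ sont localement cyclotomiques galoisiennes sur $F$ ; par la proposition~\ref{desc-cycl}, $X_{L_{\infty}}'$ est donc $\Z_p$-libre. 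Condition $(b)$ : pour $V\triangleleft U$ correspondant \`a $L\subseteq M$, l'exactitude de la limite inductive donne
\[
\ker\left(\WK_{2i}(L_{\infty})\to\WK_{2i}(M_{\infty})\right)=\varinjlim_n\ker\left(\WK_{2i}(L_n)\to\WK_{2i}(M_n)\right)=\varinjlim_n\Cap_i(M_n/L_n)=0,
\]
puisque $L_n\subseteq M_n$ sont localement cyclotomiques galoisiennes sur $F$ ; le diagramme~\ref{diag-transfert} traduit cette nullit\'e en l'injectivit\'e du transfert. La proposition~\ref{kawada2} permet de conclure que $G$ est pro-$p$-libre.

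L'obstacle principal tient \`a l'ajustement des quantificateurs. La proposition~\ref{kawada2} requiert les conditions $(a)$ et $(b)$ pour \emph{tous} les sous-groupes ouverts de $G$, tandis que l'hypoth\`ese $(ii)$ et la proposition~\ref{cap-nul} ne sont directement exploitables que pour des extensions \emph{galoisiennes} sur $F$ ; or un sous-groupe ouvert de $G$, m\^eme distingu\'e, correspond en g\'en\'eral \`a une extension $L/F$ non galoisienne (le corps $N=L_{\infty}$ peut \^etre galoisien sur $F_{\infty}$ sans l'\^etre sur $F$). Il faut donc justifier que la famille cofinale des $\G_{L_{\infty}}'$ avec $L/F$ galoisienne suffit \`a d\'etecter la pro-$p$-libert\'e, ou bien \'etablir une variante de la proposition~\ref{cap-nul} pour des \'etages non galoisiens, par passage \`a la cl\^oture galoisienne et descente. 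C'est pr\'ecis\'ement cette compatibilit\'e du dictionnaire sous-groupes/corps avec la transitivit\'e des noyaux de capitulation qui constitue le c\oe ur technique ; la correspondance transfert $\leftrightarrow$ application d'extension, elle, est enti\`erement fournie par le lemme pr\'ec\'edent.
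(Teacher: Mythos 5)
Votre preuve suit la m\^eme route que celle du texte : r\'eduction \`a la proposition \ref{kawada2}, dictionnaire entre sous-groupes ouverts de $\G_{F_{\infty}}'$ et extensions localement cyclotomiques, commutativit\'e du diagramme (\ref{diag-transfert}), puis propositions \ref{desc-cycl}, \ref{cap-nul} et \ref{desc-loc-cycl}. Le sens $(i)\Rightarrow(ii)$ est correct et complet ; votre variante par le carr\'e commutatif des morphismes d'extension (qui n'utilise que $\Cap_i(F)=0$ et l'injectivit\'e de $\WK_{2i}(F_{\infty})\rightarrow\WK_{2i}(L_{\infty})$) est m\^eme un peu plus \'economique que le passage aux invariants sous $\Gamma$ effectu\'e dans le texte, lequel requiert aussi la descente cyclotomique au niveau de $L$.

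Pour le sens $(ii)\Rightarrow(i)$, le probl\`eme de quantificateurs que vous signalez sans le r\'esoudre est un vrai trou de votre r\'edaction : la proposition \ref{kawada2} exige la condition (a) pour \textit{tout} sous-groupe ouvert $U\triangleleft \G_{F_{\infty}}'$ et la condition (b) pour tout couple $V\triangleleft U$, alors que vous ne les v\'erifiez que pour les sous-groupes $\G_{L_{\infty}}'$ avec $L/F$ galoisienne ; or $U\triangleleft\G_{F_{\infty}}'$ ne fournit qu'une extension $L/F_{n_0}$ galoisienne, et $V\triangleleft U$ encore moins. (Le texte passe d'ailleurs ce point sous silence : il applique l'hypoth\`ese (ii) et la proposition \ref{cap-nul} \`a des \'etages $L_m/F$ qui ne sont pas suppos\'es galoisiens sur $F$.) Le trou se comble par l'argument de cl\^oture galoisienne que vous \'evoquez, et il faut l'\'ecrire. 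Le groupe $\G':=\Gal(\L_{F_{\infty}}'/F)$ est pro-$p$ (extension de $\Gamma\simeq\Z_p$ par $\G_{F_{\infty}}'$), donc toute sous-extension finie $N/F$ de $\L_{F_{\infty}}'/F$ admet une cl\^oture galoisienne $\widetilde N$ \textit{dans} $\L_{F_{\infty}}'$, qui est une $p$-extension localement cyclotomique de $F$ ; comme l'extension $\WK_{2i}(F)\rightarrow\WK_{2i}(\widetilde N)$ se factorise par $\WK_{2i}(N)$, on a $\Cap_i(N/F)\subseteq\Cap_i(\widetilde N/F)=0$ par (ii) : la capitulation depuis $F$ s'annule dans toute sous-extension finie, galoisienne ou non. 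Ensuite, tout sous-groupe d'un $p$-groupe fini \'etant sous-normal, $N$ s'ins\`ere dans une tour $F=N_0\subseteq N_1\subseteq\cdots\subseteq N_k=N$ \`a \'etages galoisiens, et on montre par r\'ecurrence sur $j$ que la capitulation depuis $N_j$ s'annule dans toute sous-extension finie de $\L_{F_{\infty}}'/N_j$ : pour $M\supseteq N_{j+1}$ finie, la cl\^oture galoisienne $\widetilde M$ de $M$ sur $N_j$ (prise dans $\L_{F_{\infty}}'$) v\'erifie $\Cap_i(\widetilde M/N_j)=0$ par hypoth\`ese de r\'ecurrence, la proposition \ref{cap-nul} appliqu\'ee \`a la tour $N_j\subseteq N_{j+1}\subseteq\widetilde M$ donne $\Cap_i(\widetilde M/N_{j+1})=0$, d'o\`u $\Cap_i(M/N_{j+1})=0$ par factorisation. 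On obtient ainsi $\Cap_i(M/L)=0$ pour tout couple de sous-extensions finies $L\subseteq M$ de $\L_{F_{\infty}}'/F$ (et aussi $\mu_{X_{L_{\infty}}'}=0$ pour tout $L$, car $\G_{L_{\infty}}'$ contient le sous-groupe ouvert de type fini $\G_{\widetilde L_{\infty}}'$) ; vos v\'erifications de (a) et (b) s'\'etendent alors mot pour mot \`a tous les sous-groupes requis par la proposition \ref{kawada2}, qui permet de conclure.
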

\begin{proof}
D'apr\`es la proposition \ref{kawada2}, le groupe $\G_{F_{\infty}}'$ est pro-$p$-libre si et seulement si pour tout sous groupe ouvert $U\triangleleft \G_{E_{\infty}}'$
\begin{enumerate}
\item[(a)] $U^{ab}$ est $\Z_p$-libre.
\item[(b)] Pour tout sous groupe ouvert $V\triangleleft U$, $$\mathcal V :\Q_p/\Z_p\otimes U^{ab}\rightarrow\Q_p/\Z_p\otimes V^{ab},$$
est injectif.
\end{enumerate}
\medskip

Supposons que (a) et (b) sont satisfaites. Soit $L$ une $p$-extension localement cyclotomique de $F$. Le groupe $\G_{L_{\infty}}'$ est un sous-groupe ouvert distingu\'e de $\G_{F_{\infty}}'$. L'hypoth\`ese (b) et la commutativit\'e de (\ref{diag-transfert}) nous donnent
$$\WK_{2i}(F_{\infty})\hookrightarrow \WK_{2i}(L_{\infty}).$$
Soit l'entier $n_0\geq 0$ tel que $F_{n_0}=F_{\infty}\cap L$. Les groupes de Galois $\Gamma_{F_{n_0}}:=\Gal(F_{\infty}/F_{n_0})$ et $\Gamma_{L}:=\Gal(L_{\infty}/L)$ sont \textit{canoniquement} isomorphes. Posons $\Gamma:=\Gamma_{F_{n_0}}\simeq \Gamma_{L}$. On peut alors passer aux invariants sous $\Gamma$ dans l'inclusion pr\'ec\'edente.  L'hypoth\`ese (a) et la proposition \ref{desc-cycl} nous donnent ainsi l'inclusion :
$$\WK_{2i}(F_{n_0})\hookrightarrow \WK_{2i}(L).$$ 
La descente galoisienne des noyaux sauvages est satisfaite dans $F_{\infty}/F$. Ainsi $\Cap_i(L/F)=0$. La surjectivit\'e r\'esulte de la proposition \ref{desc-loc-cycl}.

\medskip
Montrons que l'assertion (ii) entra\^ine (a) et (b).

Soit $U\triangleleft \G_{F_{\infty}}$ un sous-groupe ouvert. Il existe un entier $n_0$ et une $p$-extension $L/F_{n_0}$ localement cyclotomique telle que $U^{ab}=X_{L_{\infty}}'$. Par hypoth\`ese, pour tout entier $m\geq 0$ on a $\Cap_i(L_m/F)=0$ . Donc d'apr\`es la proposition \ref{cap-nul}, on a aussi $\Cap_i(L_m/L)=0$. Enfin la proposition \ref{desc-cycl} nous donne la $\Z_p$-libert\'e de $U^{ab}$ et d\'emontre (a).

Soit un couple de sous-groupes ouverts $V\triangleleft U$. Il existe un couple d'extensions de $F$, localement cyclotomiques, $L\subseteq M$ telles que $$U^{ab}=X_{L_{\infty}}' \mbox{ et } V^{ab}=X_{M_{\infty}}'.$$
Toujours d'apr\`es l'hypoth\`ese (ii) et la proposition \ref{cap-nul}, on a pour tout $n\gg 0$, l'injection
$$\WK_{2i}(L_n)\hookrightarrow \WK_{2i}(M_n).$$

Enfin on a bien $\WK_{2i}(L_{\infty})\hookrightarrow \WK_{2i}(M_{\infty})$ et la commutativit\'e de (\ref{diag-transfert}) permet de conclure \`a l'injectivit\'e de $\mathcal V$.
\end{proof}

\textbf{Remarques.}
\begin{itemize}
\item Encore une fois, dans l'assertion (ii), on peut remplacer le quantificateur \textit{pour tout entier} par le quantificateur \textit{il existe un entier}.
\item La pro-$p$-libert\'e de $\G_{F_{\infty}}'$ est donc \'equivalente \`a la trivialit\'e des deux invariants attach\'es au coupe $(F,p)$ suivants :
$$\mu_{X_{F_{\infty}}'} \mbox{ et } \Cap_i(\L_{F_{\infty}'}/F).$$
\end{itemize}

\section{Applications}
\subsection{Le cas pro-$p$-cyclique}
Un groupe non trivial $\G$ est dit \textit{pro-$p$-cyclique} lorsque $d(\G)=1$. Autrement dit,
$$\G \simeq \Z/p^N \mbox{  ou  } \G \simeq \Z_p,$$

Comme corollaire \`a la d\'emonstration du th\'eor\`eme \ref{thm-desc-loc-cycl}, nous obtenons un crit\`ere pour la pro-$p$-cyclicit\'e de $\G_{F_{\infty}}'$.\\
On fixe un entier positif $i$ tel que $i\equiv 0 \mod d$.
\begin{prop}
Soit $F$ un corps de nombres. S'il existe une extension $L/F$ cyclique de degr\'e $p$, localement cyclotomique et disjointe de $F_{\infty}$ telle que le morphisme
$$\WK_{2i}(F)\rightarrow \WK_{2i}(L)$$
est surjectif, alors le groupe $\G_{F_{\infty}}'$ est pro-$p$-cyclique. 
\end{prop}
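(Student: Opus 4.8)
Le plan est de traduire la surjectivit\'e de l'hypoth\`ese en une surjectivit\'e du transfert au niveau des modules d'Iwasawa, puis d'en d\'eduire que $\G_{F_\infty}'$ est engendr\'e par un seul \'el\'ement. Posons $\G:=\G_{F_\infty}'$. Comme $L/F$ est localement cyclotomique, cyclique de degr\'e $p$ et disjointe de $F_\infty$, le groupe $H:=\G_{L_\infty}'$ est un sous-groupe ouvert distingu\'e d'indice $p$ de $\G$, de quotient $G:=\G/H\simeq\Gal(L_\infty/F_\infty)\simeq\Z/p$ (comme dans la preuve du th\'eor\`eme \ref{thm-desc-loc-cycl}). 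On pose $X_{F_\infty}'=\G^{ab}$ et $X_{L_\infty}'=H^{ab}$. Puisque $d(\G)=\dim_{\F_p}(X_{F_\infty}'/pX_{F_\infty}')$, la pro-$p$-cyclicit\'e \'equivaut \`a $d(\G)\le 1$, c'est-\`a-dire \`a l'inclusion $H\subseteq\Phi(\G)$ dans le sous-groupe de Frattini $\Phi(\G)=\overline{\G^p[\G,\G]}$.

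Je montrerais d'abord que le morphisme d'extension $e_{2,i}:\WK_{2i}(F)\to\WK_{2i}(L)$ de l'hypoth\`ese est induit par le transfert $\Ver:X_{F_\infty}'\to X_{L_\infty}'$. En effet, le th\'eor\`eme \ref{schneider} donne $\WK_{2i}(F)\simeq(X_{F_\infty}'(i))_\Gamma$ et $\WK_{2i}(L)\simeq(X_{L_\infty}'(i))_{\Gamma_L}$, o\`u $\Gamma_L=\Gal(L_\infty/L)$, et le lemme pr\'ec\'edant le th\'eor\`eme \ref{thm-desc-loc-cycl} identifie ce morphisme, par dualit\'e avec la cor-restriction en cohomologie de $\G'$, \`a l'application d\'eduite de $\Ver$ sur les co-invariants tordus. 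Je r\'eduirais alors modulo $p$ : comme $\chi(\gamma)^i\equiv 1\pmod p$ (car $\Gamma$ est pro-$p$, avec $\mu_4\subseteq F$ si $p=2$) le twist \`a la Tate dispara\^it, et l'isomorphisme canonique $\Gamma\simeq\Gamma_L$ fourni par la disjonction $L\cap F_\infty=F$ identifie les deux uniformisantes. On obtient ainsi un isomorphisme entre $e_{2,i}\otimes\F_p$ et $\Ver\otimes_\Lambda\Lambda/(p,T):X_{F_\infty}'/(p,T)\to X_{L_\infty}'/(p,T)$. L'hypoth\`ese de surjectivit\'e rend cette derni\`ere application surjective ; le lemme de Nakayama topologique appliqu\'e au $\Lambda$-module de type fini $\coker(\Ver)$ entra\^ine alors que $\Ver:X_{F_\infty}'\to X_{L_\infty}'$ est surjectif.

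Il resterait \`a conclure. Notons $\pi:X_{L_\infty}'\to X_{F_\infty}'$ l'application naturelle induite par l'inclusion $H\hookrightarrow\G$ ; on dispose de la relation $\pi\circ\Ver=[\G:H]=p$ sur $X_{F_\infty}'$. Comme $\Ver$ est surjectif, il vient
\[\im(\pi)=\pi(X_{L_\infty}')=\pi(\Ver(X_{F_\infty}'))=pX_{F_\infty}'.\]
Or $\im(\pi)$ n'est autre que l'image de $H$ dans $\G^{ab}=X_{F_\infty}'$ : l'\'egalit\'e $\im(\pi)=pX_{F_\infty}'$ signifie que $H$ s'envoie sur $0$ dans $X_{F_\infty}'/pX_{F_\infty}'=\G/\Phi(\G)$, soit $H\subseteq\Phi(\G)$. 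Puisque $\G/H\simeq\Z/p$, on en d\'eduit $d(\G)\le 1$, donc $d(\G)=1$ car $\G\neq 1$ : le groupe $\G_{F_\infty}'$ est pro-$p$-cyclique. On notera que la nullit\'e de $\mu_{X_{F_\infty}'}$ n'a pas \`a \^etre suppos\'ee ici : elle d\'ecoule a posteriori de la finitude de $d(\G)$.

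Le point d\'elicat sera l'\'etape interm\'ediaire, \`a savoir justifier proprement l'identification de $e_{2,i}\bmod p$ avec $\Ver\bmod(p,T)$ — le suivi du twist \`a la Tate, l'usage de l'isomorphisme canonique $\Gamma\simeq\Gamma_L$ et la fonctorialit\'e de la description de Schneider (extension $\leftrightarrow$ transfert, comme dans le lemme) — afin de l\'egitimer l'argument de Nakayama. La relation $\pi\circ\Ver=p$ et sa traduction en termes de sous-groupe de Frattini sont ensuite purement formelles.
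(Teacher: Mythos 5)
Votre d\'emonstration est correcte et repose sur le m\^eme squelette que celle de l'article : (1) identifier le morphisme d'extension $\WK_{2i}(F)\rightarrow\WK_{2i}(L)$ au transfert $\Ver : X_{F_{\infty}}'\rightarrow X_{L_{\infty}}'$ via la description de Schneider, (2) en d\'eduire par Nakayama que $\Ver$ est surjectif, (3) conclure par la relation de composition d'indice $p$. Les diff\'erences sont des formulations duales l'une de l'autre. Pour (2), l'article pose $M:=\coker(\Ver)$, prend les co-invariants sous $\Gamma$ de la suite $X_{F_{\infty}}'(i)\stackrel{\Ver}{\rightarrow}X_{L_{\infty}}'(i)\rightarrow M(i)\rightarrow 0$ --- ce qui redonne pr\'ecis\'ement le morphisme de l'hypoth\`ese --- et applique Nakayama \`a $M(i)_{\Gamma}=0$ ; votre r\'eduction modulo l'id\'eal maximal $(p,T)$ est strictement \'equivalente (notez toutefois que la disparition du tordu \`a la Tate modulo $p$ tient \`a $i\equiv 0 \mod d$, ce qui couvre aussi le cas $\mu_p\not\subset F$, et pas seulement au fait que $\Gamma$ est pro-$p$). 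Pour (3), l'article dualise : la surjectivit\'e du transfert \'equivaut \`a l'injectivit\'e de la corestriction, puis $\cor\circ\res=p$ et l'inflation-restriction donnent $H^1(\G_{F_{\infty}}',\Z/p)\simeq H^1(\Gal(L/F),\Z/p)\simeq\Z/p$, soit $d(\G_{F_{\infty}}')=1$ ; vous restez c\^ot\'e ab\'elianis\'e avec la relation duale $\pi\circ\Ver=p$ et le sous-groupe de Frattini, d'o\`u $d(\G_{F_{\infty}}')\leq 1$, puis $=1$ car $\G_{F_{\infty}}'$ s'envoie surjectivement sur $\Gal(L_{\infty}/F_{\infty})\simeq\Z/p$. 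Votre variante est un peu plus \'el\'ementaire (elle \'evite l'inflation-restriction et l'identification ${}_{p}H^1(\cdot,\Q_p/\Z_p)\simeq H^1(\cdot,\Z/p)$) et elle rend explicite que l'hypoth\`ese $\mu_{X_{F_{\infty}}'}=0$ n'est pas n\'ecessaire ici. Enfin, les deux preuves reposent sur le m\^eme point laiss\'e implicite dans l'article : la compatibilit\'e, au niveau \textit{fini}, entre morphisme d'extension et transfert sur les co-invariants tordus (analogue fini du lemme pr\'ec\'edant le th\'eor\`eme \ref{thm-desc-loc-cycl}) ; vous avez raison de signaler ce point comme la seule \'etape demandant une r\'edaction soigneuse.
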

\begin{proof}
Posons $M:=\coker(\left(X_{F_{\infty}}'/\Ver(X_{L_{\infty}}')\right))$.\\
On a la suite exacte :
$$X_{F_{\infty}}'(i)\stackrel{\Ver}{\rightarrow}X_{L_{\infty}}'(i)\rightarrow M(i) \rightarrow 0.$$
On pose $\Gamma:=\Gal(F_{\infty}/F)\simeq \Gal(L_{\infty}/L)$. En passant aux co-invariants sous l'action de $\Gamma$, on obtient la suite exacte :
$$\WK_{2i}(F) \rightarrow \WK_{2i}(L) \rightarrow M(i)_{\Gamma} \rightarrow 0.$$
D'apr\`es l'hypoth\`ese, le groupe $M(i)_{\Gamma}$ est nul et le lemme de Nakayama nous donne la trivialift\'e de $M$. Le transfert est ainsi surjectif et de mani\`ere duale, on a l'injection :
$$ H^1(\G_{F_{\infty}}',\Q_p/\Z_p) \stackrel{\cor}{\hookrightarrow} H^1(\G_{L_{\infty}}',\Q_p/\Z_p).$$
L'\'egalit\'e $\cor \circ \res = p$ et l'injectivit\'e de la co-restriction nous donnent donc :
$$\ker(\res)=\!_{p}H^1(\G_{F_{\infty}}',\Q_p/\Z_p) \simeq H^1(\G_{F_{\infty}}',\Z/p).$$
Par ailleurs, la suite exacte d'inflation-restriction nous donne aussi :
$$\ker(\res)=H^1(\Gal(L/F),\Z/p).$$
Or $\Gal(L/F)\simeq \Z/p$, donc $H^1(\G_{F_{\infty}}',\Z/p) \simeq \Z/p$.
\end{proof}

\textbf{Remarque.} R\'eciproquement, on v\'erifie facilement que si $\G_{F_{\infty}}'\simeq \Z_p$, alors pour tout $n\geq 0$,
l'extension induit un isomorphisme
$$\WK_{2i}(F_n)\simeq \WK_{2i}(L_n).$$

\subsection{Crit\`eres de non pro-$p$-libert\'e}
Dans cette partie $F$ d\'esigne un corps de nombres contenant $\mu_{2p}$. On pose $\G_{\infty}':=\G_{F_{\infty}'}$ et pour tout $\Z_p$-module de type fini $A$, on note $\rg_p(A):=\dim_{\F_p}(A/p)$ le $p$-rang de $A$.\\
Dans \cite[Th\'eor\`eme 12]{JS}, (cf aussi \cite[ Th\'eor\`eme 3]{As}) les auteurs donnent un crit\`ere de non-finitude pour $\G_{\infty}$. Il s'\'enonce comme suit : si on a l'in\'egalit\'e
\begin{eqnarray}\label{ineg-fin}
\rg_p(\WK_{2i}(F))\geq 1+2\sqrt{r_2(F)+2},
\end{eqnarray} alors $\G_{\infty}'$ est infini.\\
L'objet de cette partie est d'utiliser le th\'eor\`eme \ref{thm-desc-loc-cycl} pour donner un crit\`ere de non pro-$p$-libert\'e pour $\G_{\infty}'$. Nous aurons besoin du lemme suivant :
\begin{prop}\label{cap} Soit $L$ une $p$-extension $S$-ramifi\'ee de $F$ et $F$ contient $\mu_{2p}$. Posons $G=\Gal(L/F)$ et le $p$-rang de $G$.\\
Si $d(G) > 1+r_2(F)$ alors $\Cap_{i}(L/F)$ n'est pas trivial.
\end{prop}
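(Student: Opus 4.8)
The plan is to translate the statement into Galois cohomology and argue by contraposition, showing that $\Cap_i(L/F)=0$ forces $d(G)\le 1+r_2(F)$. The starting point is the capitulation theorem, which gives the canonical isomorphism $\Cap_i(L/F)\simeq H^1(G,\K_{2i+1}(L))$; so it suffices to prove that this group is non‑zero. Two features of the hypothesis $\mu_{2p}\subseteq F$ will be used throughout: first, $F$ is totally imaginary, so $r_1(F)=0$ and the $\Z_p$‑ranks of the odd $K$‑groups are governed by $r_2$ alone; second, the mod‑$p$ cyclotomic character is trivial, so $\Z/p(i+1)\simeq \Z/p$ as a $G_F^S$‑module (recall that, since $\mu_p\subseteq F$, one has $d=1$, so the congruence $i\equiv 0\ \mod d$ is vacuous and any $i\ge 1$ is allowed).

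The numerical backbone is the global Euler–Poincaré characteristic for the finite module $\Z/p(i+1)\simeq\Z/p$ over $G_F^S$. Since $r_1(F)=0$, the archimedean local factors reduce to a single power of $r_2(F)$, giving
\[
\dim_{\F_p}H^1(G_F^S,\Z/p)=r_2(F)+\dim_{\F_p}H^0(G_F^S,\Z/p)+\dim_{\F_p}H^2(G_F^S,\Z/p).
\]
Here $H^0(G_F^S,\Z/p)=\Z/p$ contributes the term $+1$, and the reduction sequence for $\Z_p(i+1)$ together with $\cd_p(G_F^S)=2$ identifies $\dim_{\F_p}H^2(G_F^S,\Z/p)=\rg_p\K_{2i}(\O_F^S)$. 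On the other hand, the five–term inflation–restriction sequence for $G_L^S\triangleleft G_F^S$ with quotient $G$ and trivial coefficients $\Z/p$ gives $\dim_{\F_p}H^1(G,\Z/p)=d(G)$ as the dimension of $\ker(\res)$, whence unconditionally $d(G)\le \dim_{\F_p}H^1(G_F^S,\Z/p)=1+r_2(F)+\rg_p\K_{2i}(\O_F^S)$. The whole point is therefore to use $\Cap_i(L/F)=0$ to remove the term $\rg_p\K_{2i}(\O_F^S)$.

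To extract that cancellation I would pass to the Hochschild–Serre spectral sequence $E_2^{pq}=H^p(G,H^q(G_L^S,\Z_p(i+1)))\Rightarrow H^{p+q}(G_F^S,\Z_p(i+1))$. Since the twist is non‑trivial over $\Z_p$, the bottom row vanishes: $H^0(G_L^S,\Z_p(i+1))=0$, so only the rows $q=1,2$ survive, and $\cd_p(G_F^S)=2$ kills everything in total degree $\ge 3$. One reads off $E_\infty^{1,1}=H^1(G,\K_{2i+1}(L))=\Cap_i(L/F)$ and the exact sequence $0\to\Cap_i(L/F)\to\K_{2i}(\O_F^S)\to\ker\!\big(d_2\colon \K_{2i}(\O_L^S)^G\to H^2(G,\K_{2i+1}(L))\big)\to 0$. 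The assumption $\Cap_i(L/F)=0$, combined with the vanishing of the degree‑$3$ abutment, forces $H^1(G,\K_{2i}(\O_L^S))=0$ and makes $d_2$ surjective; I would then match these constraints against the mod‑$p$ descent of the previous paragraph to show that the restriction map $H^1(G_F^S,\Z/p)\to H^1(G_L^S,\Z/p)^G$ has image of dimension at least $\rg_p\K_{2i}(\O_F^S)$, so that $\ker(\res)$, i.e. $d(G)$, is bounded by $1+r_2(F)$. This contradicts $d(G)>1+r_2(F)$ and proves $\Cap_i(L/F)\ne 0$.

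The main obstacle is exactly this last matching step: reconciling the $\Z/p$‑coefficient inflation–restriction count (which carries the information about $d(G)$) with the $\Z_p(i+1)$‑coefficient descent (which carries the information about $\Cap_i(L/F)$), so as to certify that the vanishing of $\Cap_i(L/F)$ absorbs the full $\rg_p\K_{2i}(\O_F^S)$ into the restriction image. Equivalently, the difficulty is the precise control of the second cohomology group $H^2(G,\K_{2i+1}(L))$ and of the transgression $d_2$; everything else is rank bookkeeping fed by the Euler–Poincaré formula, whose arithmetic content (the term $r_2(F)$ from the archimedean places and the term $1$ from $H^0(G_F^S,\Z/p)$, non‑zero precisely because $\mu_p\subseteq F$) transparently produces the bound $1+r_2(F)$.
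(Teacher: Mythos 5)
Your proposal is not a complete proof: the step you yourself flag as ``the main obstacle'' --- converting the vanishing of $\Cap_i(L/F)$ into the statement that the image of $\res : H^1(G_F^S,\Z/p)\to H^1(G_L^S,\Z/p)$ has dimension at least $\rg_p\K_{2i}(\O_F^S)$ --- is precisely the entire content of the proposition, and nothing in your two set-up paragraphs supplies it. The Euler--Poincar\'e count only yields the unconditional bound $d(G)\le 1+r_2(F)+\rg_p\K_{2i}(\O_F^S)$, and the Hochschild--Serre analysis with $\Z_p(i+1)$-coefficients only reproves $\Cap_i(L/F)\simeq H^1(G,\K_{2i+1}(L))$ together with facts that are automatic consequences of $\cd_p(G_F^S)=2$: the surjectivity of $d_2\colon \K_{2i}(\O_L^S)^G\to H^2(G,\K_{2i+1}(L))$ holds with or without the hypothesis $\Cap_i(L/F)=0$, and the degree-$3$ vanishing gives only the \emph{injectivity} of $d_2$ on $E_2^{1,2}=H^1(G,\K_{2i}(\O_L^S))$, not the vanishing of that group as you assert. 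None of these statements ties the $\Z_p(i+1)$-descent data to the kernel of the mod-$p$ restriction map, so the contradiction with $d(G)>1+r_2(F)$ is never reached.

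The missing ingredient is the Bockstein comparison between the two coefficient systems, which is exactly how the paper argues (directly, not by contraposition). From $0\to\Z_p(i+1)\stackrel{p}{\rightarrow}\Z_p(i+1)\to\Z/p(i+1)\to 0$ one gets, for $F$ and for $L$, compatible exact sequences
$$0\rightarrow \K_{2i+1}(F)/p \rightarrow H^1(G_F^S,\Z/p(i+1)) \stackrel{\delta}{\rightarrow} {}_{p}\K_{2i}(\O_F^S)\rightarrow 0,$$
linked by a commutative diagram whose vertical maps are restriction/extension. Since $\mu_p\subset F$, inflation-restriction identifies $\ker(\res)$ in the middle terms with $H^1(G,\Z/p)(i+1)$, of dimension $d(G)$; commutativity shows $\delta\left(\ker(\res)\right)\subseteq \Cap_i(L/F)$, because for $x\in\ker(\res)$ the image of $\delta(x)$ in $\K_{2i}(\O_L^S)$ equals $\delta_L(\res(x))=0$; and $\ker(\delta)=\K_{2i+1}(F)/p$ has dimension exactly $\rg_p(\K_{2i+1}(F))=1+r_2(F)$ (rank $r_2(F)$ since $F$ is totally imaginary, plus $1$ for the nontrivial cyclic torsion, as $\mu_p\subset F$). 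Hence
$$d(G)\ \le\ 1+r_2(F)+\dim_{\F_p}\delta\left(\ker(\res)\right)\ \le\ 1+r_2(F)+\dim_{\F_p}{}_{p}\Cap_i(L/F),$$
which gives both your contrapositive and, under $d(G)>1+r_2(F)$, a nonzero element of $\Cap_i(L/F)$. This two-line diagram chase is the reconciliation you left open; without it, your text is a correct collection of rank bookkeeping around a hole where the proof should be.
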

\begin{proof}
Consid\'erons le diagramme commutatif suivant :
\[
\xymatrix{
0\ar[r] & \K_{2i+1}(L)/p \ar[r] & H^1(G_L^S,\Z/p^n(i+1)) \ar[r] & \K_{2i}(\O_L^S) \ar[r] & 0 \\
0\ar[r] & \K_{2i+1}(F)/p \ar[r] \ar[u] & H^1(G_F^S,\Z/p^n(i+1)) \ar[r] \ar[u]^{\res} & \K_{2i}(\O_F^S) \ar[r] \ar[u] & 0
}
\]
Comme $\mu_{2p} \subset F$ le groupe $G_F^S$ op\`ere trivialement sur $\Z/p(i+1)$ et le noyau de la restriction est :
$$\ker\left(H^1(G_F^S,\Z/p(i+1))\stackrel{res}{\rightarrow} H^1(G_L^S,\Z/p(i+1))\right)=H^1(G,\Z/p)(i+1).$$
Par hypoth\`ese $$d(G)=\dim_{\F_p}(H^1(G,\Z/p)(i+1))>\rg_p(\K_{2i+1}(F))=1+r_2(F).$$
Ainsi, il existe un \'el\'ement non nul, $x\in H^1(G,\Z/p)(i+1)$ tel que $\delta(x)\in\!_{p}\K_{2i}(\O_F^S)$ est \'egalement non nul et qui capitule dans $\K_{2i}(\O_L^S)$.
\end{proof}

\begin{thm}\label{thm-ineg}
Soit $F$ un corps de nombres contenant $\mu_{2p}$, tel que
\begin{eqnarray}\label{ineg-proplibre}\rg_p(\WK_{2i}(F))\geq 1+r_2(F).
\end{eqnarray}
Alors $\G_{\infty}'$ n'est pas un pro-$p$-groupe libre.
\end{thm}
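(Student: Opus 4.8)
The plan is to produce a single localement cyclotomique $p$-extension $L/F$ for which galoisian descent of the étale wild kernels fails, and then invoke Theorem \ref{thm-desc-loc-cycl}. More precisely, I would argue by contraposition against condition (ii) of that theorem: if I can exhibit a localement cyclotomique $L/F$ for which $\WK_{2i}(F)\to\WK_{2i}(L)^{G}$ is not an isomorphism, then $\G_{\infty}'$ cannot be pro-$p$-libre. By Proposition \ref{desc-loc-cycl} the kernel of this descent map is exactly $\Cap_i(L/F)$, and for such extensions injectivity is equivalent to bijectivity; hence it suffices to find a localement cyclotomique $L/F$ with $\Cap_i(L/F)\neq 0$. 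Now Proposition \ref{cap} manufactures precisely such a capitulation as soon as $d(\Gal(L/F))>1+r_2(F)$ — note that a localement cyclotomique extension is $p$-ramifiée, hence $S_p$-ramifiée, so the hypotheses of Proposition \ref{cap} are met. Everything therefore reduces to constructing a localement cyclotomique $p$-extension whose Galois group has $p$-rang strictly larger than $1+r_2(F)$.

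To control this $p$-rang I would pass to $\mathcal G:=\Gal(\L_{F_{\infty}}'/F)$, the Galois group of the maximal localement cyclotomique pro-$p$-extension, which sits in
$$1\rightarrow \G_{\infty}'\rightarrow \mathcal G\rightarrow \Gamma\rightarrow 1,\qquad \Gamma=\Gal(F_{\infty}/F)\simeq\Z_p.$$
Since $\Gamma$ has cohomological dimension $1$, the inflation-restriction sequence with $\F_p$-coefficients degenerates (using $H^1(\Gamma,\F_p)=\F_p$ and $H^2(\Gamma,\F_p)=0$) into
$$0\rightarrow \F_p\rightarrow H^1(\mathcal G,\F_p)\rightarrow H^1(\G_{\infty}',\F_p)^{\Gamma}\rightarrow 0,$$
so $d(\mathcal G)=1+\dim_{\F_p}H^1(\G_{\infty}',\F_p)^{\Gamma}$. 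The next step is to identify the last term with $\rg_p\WK_{2i}(F)$. Writing $H^1(\G_{\infty}',\F_p)=\Hom(X_{\infty}',\F_p)=(X_{\infty}'/p)^{*}$, passing to $\Gamma$-invariants dualises to $\Gamma$-coinvariants, giving $\dim_{\F_p}H^1(\G_{\infty}',\F_p)^{\Gamma}=\dim_{\F_p}(X_{\infty}')_{\Gamma}/p$. On the other hand Theorem \ref{schneider} (applicable since $\mu_{2p}\subset F$ forces $d=1$) identifies $\WK_{2i}(F)\simeq(X_{\infty}'(i))_{\Gamma}$, and because $\mu_p\subset F$ the Tate twist $\Z_p(i)/p$ carries the trivial $\Gamma$-action, so $X_{\infty}'(i)/p\simeq X_{\infty}'/p$ as $\Gamma$-modules and $\rg_p\WK_{2i}(F)=\dim_{\F_p}(X_{\infty}')_{\Gamma}/p$. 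Combining the two computations yields $d(\mathcal G)=1+\rg_p\WK_{2i}(F)$.

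With this identity the conclusion is immediate: hypothesis (\ref{ineg-proplibre}) gives $d(\mathcal G)\geq 2+r_2(F)>1+r_2(F)$, and since $\WK_{2i}(F)$ is finite, $d(\mathcal G)$ is finite, so the fixed field $L$ of the Frattini subgroup $\Phi(\mathcal G)$ is a finite localement cyclotomique $p$-extension with $\Gal(L/F)\simeq(\Z/p)^{d(\mathcal G)}$ and $d(\Gal(L/F))=d(\mathcal G)>1+r_2(F)$. Proposition \ref{cap} then yields $\Cap_i(L/F)\neq 0$, and by the reduction of the first paragraph $\G_{\infty}'$ is not pro-$p$-libre. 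The main obstacle, and the only step requiring care, is the rank identification of the middle paragraph — specifically the triviality modulo $p$ of the Tate twist, which is exactly the place where the standing hypothesis $\mu_p\subset F$ enters; the rest is a formal assembly of Theorem \ref{thm-desc-loc-cycl}, Propositions \ref{desc-loc-cycl} and \ref{cap}, and Theorem \ref{schneider}.
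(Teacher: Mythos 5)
Your proposal is correct and follows essentially the same route as the paper: both arguments take for $L$ the maximal abelian exponent-$p$ subextension $L_0$ of $\L_{F_{\infty}}'/F$ (the fixed field of the Frattini subgroup of $\mathcal G=\Gal(\L_{F_{\infty}}'/F)$), deduce $d(\Gal(L_0/F))>1+r_2(F)$ from the hypothesis (\ref{ineg-proplibre}), and conclude by combining Proposition \ref{cap}, Proposition \ref{desc-loc-cycl} and Theorem \ref{thm-desc-loc-cycl}. The only divergence is the key rank identity $d(\Gal(L_0/F))=1+\rg_p(\WK_{2i}(F))$, which the paper simply cites from Jaulent \cite{J}, whereas you derive it directly (and correctly) via inflation-restriction for $1\rightarrow\G_{F_{\infty}}'\rightarrow\mathcal G\rightarrow\Gamma\rightarrow 1$, the duality exchanging $\Gamma$-invariants and $\Gamma$-coinvariants, and Theorem \ref{schneider} together with the mod-$p$ triviality of the Tate twist granted by $\mu_p\subset F$.
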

\begin{proof}
Posons $\G'=\Gal(\L_{\infty}'/F)$. Soit $L_0/F$ la sous-extension de $\L_{\infty}'/F$, ab\'eliennne d'exposant $p$ maximale. Comme $\mu_{2p}\subset F$, on a les \'egalit\'es entre $p$-rangs (cf. \cite{J}) : $$d(\Gal(L_0/F))=1+\rg_p(\WK_{2i}(F)).$$
Donc par hypoth`ese $d(\Gal(L_0/F))>1+r_2(F)$. Ainsi, d'apr\`es la proposition \ref{cap}, le noyau $\Cap_{i}(L_0/F)$ est non nul. Comme $L_0/F$ est une $p$-extension localement cyclotomique, la capitulation porte sur le noyau $\WK_{2i}(F)$ et le th\'eor\`eme \ref{thm-desc-loc-cycl} permet de conclure.
\end{proof}
\textbf{Remarques.}
\begin{itemize}
\item L'in\'egalit\'e (\ref{ineg-fin}) entra\^ine l'in\'egalit\'e (\ref{ineg-proplibre}) d\`es que $r_2(E)\geq 5$. Ainsi, un corps satisfaisant aux conditions du th\'eor\`eme \ref{thm-ineg} et de degr\'e au moins $10$ sur $\Q$ admet un groupe $\G_{\infty}'$ infini \textit{et} non pro-$p$-libre.
\item Le $p$-rang du noyau sauvage est accessible par des m\'ethodes num\'eriques puisqu'il co\"incide avec le $p$-rang du groupe des classes logarithmiques (cf. \cite{J}).
\end{itemize}
\begin{corollaire}\label{cor:thm-ineg}
Si au moins une place $p$-adique se ramifie totalement dans $F_{\infty}/F$, alors l'in\'egalit\'e $$\rg_p(A_F')\geq 1+r_2(F)$$ implique que $\G_{\infty}'$ n'est pas pro-$p$-libre.
\end{corollaire}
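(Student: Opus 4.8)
L'id\'ee est de ramener directement le corollaire au th\'eor\`eme \ref{thm-ineg} en comparant le $p$-rang du noyau sauvage \'etale \`a celui du $p$-groupe des classes. L'outil cl\'e est la proposition \ref{prop-surj}, dont les hypoth\`eses sont pr\'ecis\'ement celles dont on dispose ici : dans cette partie $F$ contient $\mu_{2p}$, et le corollaire suppose qu'au moins une place $p$-adique se ramifie totalement dans $F_{\infty}/F$.

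On commence par fixer un entier $i \geq 1$ quelconque. Comme $\mu_{2p} \subset F$ entra\^ine $\mu_p \subset F$, le groupe $\Delta = \Gal(F(\mu_p)/F)$ est trivial et $d = 1$ ; la condition de congruence $i \equiv 0 \mod d$ est donc automatiquement satisfaite, et tout r\'esultat qui l'exige s'applique. La proposition \ref{prop-surj} fournit alors une surjection
$$\WK_{2i}(F)/p \rightarrow A_F'/p(i).$$

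On traduit ensuite cette surjection en une in\'egalit\'e entre $p$-rangs. Une surjection d'espaces vectoriels sur $\F_p$ donne $\rg_p(\WK_{2i}(F)) = \dim_{\F_p}(\WK_{2i}(F)/p) \geq \dim_{\F_p}(A_F'/p(i))$. Le tordu \`a la Tate $(i)$ ne modifie que l'action de Galois et laisse inchang\'e l'espace vectoriel sur $\F_p$ sous-jacent, de sorte que $\dim_{\F_p}(A_F'/p(i)) = \dim_{\F_p}(A_F'/p) = \rg_p(A_F')$. En combinant ceci avec l'hypoth\`ese $\rg_p(A_F') \geq 1 + r_2(F)$, on obtient
$$\rg_p(\WK_{2i}(F)) \geq \rg_p(A_F') \geq 1 + r_2(F).$$

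Il ne reste plus qu'\`a appliquer le th\'eor\`eme \ref{thm-ineg} : puisque $F$ contient $\mu_{2p}$ et que l'in\'egalit\'e (\ref{ineg-proplibre}) est v\'erifi\'ee pour cet $i$, le groupe $\G_{\infty}'$ n'est pas pro-$p$-libre. Tout \'etant \'el\'ementaire une fois la proposition \ref{prop-surj} acquise, il n'y a pas de r\'eelle difficult\'e ; le seul point \`a surveiller est que l'hypoth\`ese de ramification totale sert exactement \`a l\'egitimer cette proposition, laquelle \'etablit le lien entre le noyau sauvage tordu et le $p$-groupe des classes non tordu.
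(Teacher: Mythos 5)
Your proof is correct and takes essentially the same approach as the paper, whose own proof is the one-line remark that the corollary is a consequence of Proposition \ref{prop-surj} combined with Theorem \ref{thm-ineg}. You merely make explicit the intermediate step the paper leaves implicit: the surjection $\WK_{2i}(F)/p \rightarrow A_F'/p(i)$ yields $\rg_p(\WK_{2i}(F)) \geq \rg_p(A_F')$ because the Tate twist does not alter $\F_p$-dimensions, so the hypothesis $\rg_p(A_F') \geq 1+r_2(F)$ delivers inequality (\ref{ineg-proplibre}).
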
 
\begin{proof}
C'est une cons\'equence de la proposition \ref{prop-surj} et du th\'eor\`eme \ref{thm-ineg}.
\end{proof}

\medskip

Pour v\'erifier que le crit\`ere du th\'eor\`eme \ref{thm-ineg} est int\'eressant il faut exhiber des corps de nombres $F$ qui satisfont l'in\'egalit\'e (\ref{ineg-proplibre}). Le principe est le suivant : dans une $p$-extension $F/k$, le $p$-rang des noyaux sauvages \'etales de $F$ est suffisamment grand d\`es que l'extension $F/k$ est suffisamment ramifi\'ee. 

\begin{prop}
Soit $F/k$ une extension cyclique de degr\'e $p$. Supposons que 
\begin{itemize}
\item $F$ contient $\mu_{2p}$
\item au moins une place $p$-adique se ramifie totalement dans $F_{\infty}/F$
\item $F/k$ est une extension cyclique de degr\'e $p$ dans laquelle toute les places $p$-adiques se ramifient.
\item $|Ram(F/k)-S_p(k)| \geq 2+(p+1)r_2(k)$, o\`u $Ram(F/k)$ est l'ensemble des places de $k$ qui se ramifient dans $F/k$.\\
\end{itemize}
Alors le groupe $\G_{\infty}'$ n'est pas pro-$p$-libre.
\end{prop}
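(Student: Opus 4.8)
Le plan est de se ramener au corollaire \ref{cor:thm-ineg}. Les deux premières hypothèses, « $F\supseteq\mu_{2p}$ » et « au moins une place $p$-adique se ramifie totalement dans $F_{\infty}/F$ », sont précisément celles de ce corollaire : via la proposition \ref{prop-surj} elles fournissent la surjection $\WK_{2i}(F)/p\to A_F'/p(i)$, donc $\rg_p(\WK_{2i}(F))\geq\rg_p(A_F')$, et ramènent la non pro-$p$-liberté de $\G_{\infty}'$ à l'inégalité
\[
\rg_p(A_F')\;\geq\;1+r_2(F).
\]
Il suffit donc de montrer que la ramification imposée dans $F/k$ force cette minoration du $p$-rang du groupe des $p$-classes de $F$.

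Je commencerais par traduire les invariants archimédiens en termes de $k$. Comme $\mu_{2p}\subseteq F$, le corps $F$ est totalement imaginaire ($r_1(F)=0$) ; de plus $\mu_p\subseteq k$ (l'action de $\Gal(F/k)$ sur $\mu_p$ se factorise par un groupe d'ordre premier à $p$, donc est triviale), et toute place complexe de $k$ est totalement décomposée dans $F/k$. On en déduit $r_2(F)=p\,r_2(k)$, de sorte que la cible devient $\rg_p(A_F')\geq 1+p\,r_2(k)$. J'appliquerais ensuite la théorie des genres à l'extension cyclique $F/k$ de degré $p$, de groupe $G=\Gal(F/k)$, relativement au groupe des $S_p$-classes. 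En posant $t'=|Ram(F/k)-S_p(k)|$, chacune de ces $t'$ places finies hors de $S_p$ ramifiées dans $F/k$ fournit un premier $\mathfrak w_v$ de $F$ totalement ramifié, d'où une classe ambiguë ; via le corps des genres $F_{\mathrm{gen}}/F$ (non ramifié, totalement décomposé en $S_p$, maximal abélien sur $k$), la formule des classes ambiguës de Chevalley sous sa forme en $p$-rang donne
\[
\rg_p(A_F')\;\geq\;\rg_p\bigl((A_F')_G\bigr)\;=\;\rg_p\bigl(\Gal(F_{\mathrm{gen}}/F)\bigr)\;\geq\;t'-1-\delta,
\]
où $\delta$ mesure le défaut de normes provenant des $S_p$-unités de $k$.

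La clé est la majoration $\delta\leq r_2(k)$, et c'est ici qu'intervient la ramification totale de \emph{toutes} les places $p$-adiques (troisième hypothèse). Le défaut $\delta$ est gouverné par la cohomologie de Tate du $G$-module $E_F^{S_p}$ ; le calcul de son quotient de Herbrand,
\[
h(G,E_F^{S_p})\;=\;\frac{\prod_{v\in S_p(k)}|G_w|}{|G|}\;=\;\frac{p^{\,s_p(k)}}{p}\;=\;p^{\,s_p(k)-1},
\]
où $s_p(k)$ est le nombre de places $p$-adiques de $k$, avec $G_w=G$ aux places $p$-adiques totalement ramifiées et $G_w=1$ aux places complexes décomposées, montre que les $s_p(k)$ conditions locales de norme aux places $p$-adiques absorbent exactement le terme $s_p(k)$ que ferait craindre le rang de Dirichlet de $E_F^{S_p}$, ramenant le défaut à $\delta\leq r_2(k)$. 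En reportant l'hypothèse $t'\geq 2+(p+1)r_2(k)$ on obtient
\[
\rg_p(A_F')\;\geq\;t'-1-r_2(k)\;\geq\;\bigl(2+(p+1)r_2(k)\bigr)-1-r_2(k)\;=\;1+p\,r_2(k)\;=\;1+r_2(F),
\]
c'est-à-dire l'inégalité (\ref{ineg-proplibre}), et le corollaire \ref{cor:thm-ineg} permet de conclure.

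La difficulté principale est la détermination exacte du défaut $\delta$ : il faut identifier, parmi les $S_p$-unités de $k$, lesquelles sont des normes globales depuis $F$ (théorème de norme de Hasse pour l'extension cyclique $F/k$), puis vérifier que la ramification totale des places $p$-adiques retranche précisément la contribution $s_p(k)$, laissant le seul terme $r_2(k)$ qui figure dans la quatrième hypothèse. C'est ce contrôle fin de la condition de norme en $S_p$ — plutôt que la formule des classes ambiguës, qui ne donne que l'ordre de $(A_F')^G$ — qui demande le plus de soin ; on pourrait aussi mener le calcul directement sur le $p$-rang du groupe des classes logarithmiques, qui coïncide avec celui de $\WK_{2i}(F)$.
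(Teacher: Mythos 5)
Your global strategy coincides with the paper's: reduce, via Corollary \ref{cor:thm-ineg} (i.e. Proposition \ref{prop-surj} combined with Theorem \ref{thm-ineg}), to the inequality $\rg_p(A_F')\geq 1+r_2(F)$, note that $\mu_p\subset k$ and $r_2(F)=p\,r_2(k)$, and obtain the needed bound $\rg_p(A_F')\geq t'-1-r_2(k)$ (with $t'=|Ram(F/k)-S_p(k)|$) from genus theory. The paper does exactly this, except that it simply quotes this last inequality from \cite[Proposition 10.8.3]{NSW}, whereas you attempt to reprove it; and it is there that your argument has a genuine gap. The key claim $\delta\leq r_2(k)$, for the norm defect $\delta=\dim_{\F_p}\bigl(E_k^{S_p}/(E_k^{S_p}\cap N_{F/k}F^{\times})\bigr)$, is false in general, and the Herbrand-quotient argument you offer for it runs in the wrong direction: the computation $h(G,E_F^{S_p})=p^{\,s_p(k)-1}$ says that $|\hat H^{0}(G,E_F^{S_p})|=p^{\,s_p(k)-1}\,|\hat H^{1}(G,E_F^{S_p})|$, i.e. it bounds the norm index $[E_k^{S_p}:N_{F/k}E_F^{S_p}]$ from \emph{below}; it gives no upper bound on $\delta$. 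Intuitively the situation is the opposite of the ``absorption'' you describe: total ramification of the $p$-adic places imposes \emph{additional} local norm conditions on the $S_p$-units, so it can only enlarge the defect. Concretely, take $p=3$ and $k=\Q(\mu_3)$, so $r_2(k)=1$, $s_p(k)=1$, and $E_k^{S_p}\otimes\F_3=\langle\zeta_3\rangle\oplus\langle\sqrt{-3}\rangle$ has dimension $2$. By Chebotarev (applied to the independent Kummer extensions $k(\sqrt[3]{\zeta_3})=k(\mu_9)$ and $k(\sqrt[3]{\sqrt{-3}})$) choose tame primes $\mathfrak q_1,\mathfrak q_2$ of $k$ such that $\zeta_3$ is not a cube modulo $\mathfrak q_1$ (i.e. $N\mathfrak q_1\not\equiv 1 \bmod 9$) while $\sqrt{-3}$ is, and $\sqrt{-3}$ is not a cube modulo $\mathfrak q_2$. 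For any cyclic cubic extension $F/k$ ramified at $\mathfrak p=(\sqrt{-3})$, at $\mathfrak q_1,\mathfrak q_2$, and at arbitrarily many further tame primes (so all the hypotheses of the proposition can be met), the images of $\zeta_3$ and $\sqrt{-3}$ in $\bigoplus_{v}k_v^{\times}/N F_w^{\times}$ are linearly independent, whence $\delta=2>r_2(k)$.

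What the third hypothesis really buys is not a smaller unit defect but a larger count of ramified places, and this is how the $s_p(k)$ cancels. The cleanest repair: run genus theory for the \emph{ordinary} class group first. Since every place of $S_p(k)$ ramifies in $F/k$, the number of ramified finite places is $t'+s_p(k)$, and the classical rank bound only subtracts $\dim_{\F_p}(E_k\otimes\F_p)=r_2(k)$ (ordinary units of a totally imaginary field containing $\mu_p$), giving $\rg_p(\mathrm{Cl}(F))\geq t'+s_p(k)-1-r_2(k)$. Then pass to $A_F'$, which is the quotient of the $p$-part of $\mathrm{Cl}(F)$ by the classes of the $p$-adic primes of $F$: by total ramification there are exactly $s_p(k)$ such primes, so the $p$-rank drops by at most $s_p(k)$, and $\rg_p(A_F')\geq t'-1-r_2(k)$ as required. (Equivalently, in your $S_p$-split genus-field computation each ramified $v\in S_p(k)$ must be counted in the numerator with a full local factor $k_v^{\times}/NF_w^{\times}\simeq\Z/p$, while on the unit side one subtracts the honest bound $r_2(k)+s_p(k)$; the two occurrences of $s_p(k)$ cancel.) With this correction --- or by simply citing \cite[Proposition 10.8.3]{NSW}, as the paper does --- the remainder of your argument, which matches the paper's, goes through.
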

\begin{proof}
On a l'in\'egalit\'e (cf. \cite[Proposition 10.8.3]{NSW}) :
$$\rg_p A_F'\geq |Ram(F/k)-S_p(k)|-r_1(k)-r_2(k)-\delta +r_1'(k),$$
o\`u $r_1'(k)$ est le nombre de places r\'elles de $k$ qui se complexifient dans $F$ et $\delta$ vaut $0$ ou $1$ selon que $\mu_p \not\subset k$ ou $\mu_p \subset k$.\\
La proposition \ref{prop-surj} et le th\'eor\`eme \ref{thm-ineg} permettent de conclure. 
\end{proof}
\textbf{Remarque.} La proposition pr\'ec\'edente permet de construire une infinit\'e de couples $(F,p)$ tels que $\G_{F_{\infty}}'$ n'est pas pro-$p$-libre.

\textbf{Exemple.} Supposons que $F/\Q(\mu_p)$ est une extension cyclique de degr\'e $p$, ramifi\'ee au dessus de $p$ et dans laquelle au moins $\frac{p^2+3}{2}$ places non $p$-adiques se ramifient. Alors le groupe $\G_{F_{\infty}}'$ n'est pas pro-$p$-libre.

Pour finir, on se place dans le cadre suivant. Le premier $p$ est suppos\'e impair. Le corps $E$ contient $\mu_{p}$ et est \`a conjugaison complexe. On note $F$ le sous corps totalement r\'eel de $E$. Posons $\Delta:=\Gal(E/F)$. Pour tout $\Z_p[\Delta]$-module $A$, on note $A^+$ (resp. $A^-$) la composante r\'eelle (resp. imaginaire) de $A$.\\
Pour simplifier, on pose $i=1$ et $\G_{\infty}'=\G_{E_{\infty}}'$.

Pour les corps \`a multiplication complexe, on dispose du principe g\'en\'eral du \textit{Spiegelungsatz} de Leopoldt qui compare les parties "plus" et "moins" (cf. par exemple \cite[Theorem 3.5]{Ko2} ou \cite[Proposition 6]{JM}). Dans le cadre des noyaux sauvages, cela donne l'in\'egalit\'e suivante :
$$0\leq \dim_{\F_p}(\WK_{2}(E)/p)^+-\dim_{\F_p}(\WK_{2}(E)/p)^-\leq [F:\Q]=r_2(E).$$

On remarque que si le noyau sauvage satisfait l'in\'egalit\'e \ref{ineg-proplibre} alors n\'ecessairement la partie imaginaire $\WK_{2}(E)^-$ n'est pas triviale.\\
En fait, comme le montre la proposition suivante, cette condition est suffisante pour montrer que $\G_{\infty}'$ n'est pas pro-$p$-libre.
\begin{prop}
Si $\WK_{2}(E)^-\neq 0$ alors le groupe $\G_{\infty}'$ n'est pas pro-$p$-libre.
\end{prop}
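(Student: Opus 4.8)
Le plan est de raisonner par contraposée à l'aide du théorème \ref{thm-desc-loc-cycl}. Comme $\mu_p\subset E$, on a $d=d_E=1$, si bien que la condition de descente \ref{thm-desc-loc-cycl}(ii) porte sur $i=1$ : il suffit donc de produire une $p$-extension $L/E$ localement cyclotomique dans laquelle un élément non nul de $\WK_2(E)^-$ capitule. En effet, par la proposition \ref{desc-loc-cycl} on a $\Cap_1(L/E)=\ker(\WK_2(E)\to\WK_2(L))$, et la non-trivialité de ce noyau interdit l'isomorphisme $\WK_2(E)\stackrel{\simeq}{\to}\WK_2(L)^{\Gal(L/E)}$, donc la pro-$p$-liberté de $\G_\infty'$.

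Pour fabriquer cette extension, j'utiliserais la réflexion. On part de $\WK_2(E)=\Sha_{S_p}^2(E,\Z_p(2))$ et de la dualité de Poitou--Tate, qui fournit un accouplement parfait
$$\Sha_{S_p}^2(E,\Z_p(2))\times\Sha_{S_p}^1(E,\Q_p/\Z_p(-1))\longrightarrow\Q_p/\Z_p.$$
Le premier $p$ étant impair et $\Q_p/\Z_p$ étant un $\Delta$-module trivial, cet accouplement est $\Delta$-équivariant et met en dualité parfaite les composantes de même signe ; l'hypothèse $\WK_2(E)^-\neq0$ entraîne donc $\Sha_{S_p}^1(E,\Q_p/\Z_p(-1))^-\neq0$. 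On choisit alors une classe $c$ d'ordre $p$ dans cette composante, vue dans $\Sha_{S_p}^1(E,\Z/p(-1))^-$.

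Via l'isomorphisme de $G_E$-modules $\Z/p(-1)\simeq\Z/p$ (licite car $\mu_p\subset E$), la classe $c$ correspond à un morphisme non nul $G_E^{S_p}\to\Z/p$, d'où une extension cyclique $L/E$ de degré $p$. Par définition de $\Sha^1$, $c$ est localement triviale en toute place de $S_p$ : l'extension $L/E$ est donc totalement décomposée aux places $p$-adiques et non ramifiée en dehors de $S_p$, donc non ramifiée partout et $p$-décomposée. En particulier $L$ est localement cyclotomique, contenue dans $\mathcal L_E'$ ; et comme $E_\infty/E$ est ramifiée en $p$ alors que $L/E$ y est décomposée, on a $L\cap E_\infty=E$, de sorte que $L/E$ est disjointe de la tour cyclotomique.

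Il reste l'étape cruciale : montrer que la non-trivialité de $c$ force la capitulation d'un élément de $\WK_2(E)^-$ dans $L$. J'utiliserais pour cela l'identification $\Cap_1(L/E)\simeq H^1(\Gal(L/E),\K_3(L))$ rappelée à la section 1, en comparant l'accouplement de Poitou--Tate ci-dessus à l'obstruction de descente : pour $\kappa\in\WK_2(E)^-$, la valeur $\langle\kappa,c\rangle$ se calcule comme somme d'invariants locaux de cup-produits $\res_v(\kappa)\cup\res_v(\tilde c)$, et la trivialité locale de $c$ aux places de $S_p$ permet d'y lire exactement l'obstruction à l'annulation de $\res_L(\kappa)$ dans $\WK_2(L)$. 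L'accouplement étant parfait sur $\WK_2(E)^-$ et $c$ non nul, il existe $\kappa$ avec $\langle\kappa,c\rangle\neq0$, donc $\Cap_1(L/E)\neq0$, et le théorème \ref{thm-desc-loc-cycl} permet de conclure. Le point délicat est précisément cette dernière identification (accouplement de réflexion $\leftrightarrow$ défaut de descente), notamment le contrôle du sens de l'implication ; les compatibilités de signe sous $\Delta$ et la vérification que $L$ est localement cyclotomique sont, quant à elles, de routine.
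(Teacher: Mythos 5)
Votre plan d'ensemble co\"incide avec celui du texte : produire, \`a partir de $\WK_{2}(E)^-\neq 0$, une $p$-extension localement cyclotomique $L/E$ dans laquelle un \'el\'ement non nul du noyau sauvage capitule, puis conclure par le th\'eor\`eme \ref{thm-desc-loc-cycl} ; votre construction de $L$ au moyen d'une classe non nulle $c\in\Sha_{S_p}^1(E,\Q_p/\Z_p(-1))$ (extension non ramifi\'ee et $p$-d\'ecompos\'ee, donc localement cyclotomique) est correcte. En revanche, l'\'etape que vous signalez vous-m\^eme comme d\'elicate est fausse : l'accouplement de Poitou--Tate ne voit pas la capitulation. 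Il est compatible aux applications de restriction et de corestriction, $\langle \cor(x),c\rangle_E=\langle x,\res_L(c)\rangle_L$, et $\res_L(c)=0$ par d\'efinition m\^eme de $L$ ; l'application $\langle\,\cdot\,,c\rangle$ se factorise donc par $\coker\bigl(N_{L/E}\colon \WK_2(L)\to \WK_2(E)\bigr)$, et l'existence d'un $\kappa$ tel que $\langle\kappa,c\rangle\neq 0$ prouve seulement que la \emph{norme} n'est pas surjective -- fait d\'ej\`a connu (cf. \cite[Proposition 2.3]{KM}) -- et nullement que $\Cap_1(L/E)=\ker\bigl(\WK_2(E)\to \WK_2(L)\bigr)$ est non trivial. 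La section 3.1 du texte souligne pr\'ecis\'ement cette lacune : la non-surjectivit\'e de la norme ne donne que $|\Cap_1(L/E)|\geq p/|\ker(N_{L/E})|$ avec $|\ker(N_{L/E})|\in\{1,p\}$, ce qui ne permet pas de conclure. Deux v\'erifications confirment que le pas ne peut \^etre combl\'e tel quel : pour les groupes de classes, l'accouplement correspondant est l'accouplement d'Artin, dont le noyau \`a caract\`ere fix\'e est le sous-groupe des normes et non le noyau de capitulation ; par ailleurs, votre identification ne fait jouer aucun r\^ole au signe, et appliqu\'ee telle quelle elle montrerait que $\G_{\infty}'$ n'est jamais pro-$p$-libre d\`es que $\WK_2(E)\neq 0$, ce que contredit l'exemple $(\Q(\mu_{37}),37)$ o\`u $\G_{\infty}'\simeq \Z_p$ est libre non trivial, donc $\WK_2\neq 0$. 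Notons enfin que la recette de calcul par somme d'invariants locaux de cup-produits ne peut pas \^etre correcte litt\'eralement, puisque $\kappa$ et $c$ sont tous deux localement triviaux : l'accouplement entre groupes $\Sha$ est une construction secondaire.

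La preuve du texte remplace cette \'etape par un argument de comptage analogue \`a celui de la proposition \ref{cap}, men\'e sur la composante r\'eelle. De $\WK_2(E)^-\neq 0$ on tire, par r\'eflexion, que $F$ admet une extension localement cyclotomique disjointe de $F_{\infty}$, d'o\`u $\dim_{\F_p}\Gal(L_0/E)^+\geq 2$, o\`u $L_0$ d\'esigne la sous-extension ab\'elienne d'exposant $p$ maximale de $\L_{E_{\infty}}'/E$. Comme $(\K_3(E)/p)^+$ est cyclique, la partie r\'eelle du noyau de la restriction $H^1(G_E^S,\mu_p)\otimes\mu_p\rightarrow H^1(G_{L_0}^S,\mu_p)\otimes\mu_p$, de dimension au moins $2$, ne peut \^etre contenue dans $\K_3(E)/p$ : il existe donc un \'el\'ement dont l'image par $\delta$ dans ${}_p\K_2(\O_E^S)$ est non nulle et capitule dans $L_0$, et cette capitulation porte sur $\WK_2(E)$ d'apr\`es la proposition \ref{desc-loc-cycl}. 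C'est ce comptage de rangs -- et non la dualit\'e, qui ne contr\^ole que le conoyau de la norme -- qu'il faudrait substituer \`a votre \'etape cruciale pour que la d\'emonstration aboutisse.
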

\begin{proof}
L'hypoth\`ese $\WK_{2}(E)^-$ non trivial montre que $F$ admet une extension localement cyclotomique, disjointe de $F_{\infty}$. On a donc $$\dim_{\F_p}(\Gal(L_0/E)^+)\geq 2.$$
Il suffit ensuite de prendre la partie $+$ de la suite exacte :
$$0\rightarrow \K_{3}(E)/p \rightarrow H^1(G_E^S,\mu_p)\otimes \mu_p \stackrel{\delta}{\rightarrow}\!_{p}\K_{2}(\O_E^S)\rightarrow 0,$$
en consid\'erant le fait que $(\K_{3}(E)/p)^+$ est cyclique.\\
Le raisonnement est alors le m\^eme que dans la proposition \ref{cap} : il existe un \'el\'ement de $H^1(G_E^S,\mu_p)\otimes \mu_p$ qui n'est pas dans $\K_{3}(E)/p$ et qui se trivialise par restriction dans $H^1(G_{L_0}^S,\mu_p)\otimes \mu_p$. Cet \'el\'ement s'envoie donc sur un \'el\'ement non nul de $\Cap_i(L_0/F)$. Enfin le th\'eor\`eme \ref{thm-desc-loc-cycl} permet de conclure.
\end{proof}

Nous retrouvons enfin la proposition 3.3 de \cite{Wi} : 
\begin{corollaire}\label{cor-partieplus}
Supposons qu'au moins une place $p$-adique se ramifie totalement dans $E_{\infty}/E$, et que $F$ admet une extension non-ramifi\'ee et $p$-d\'ecompos\'ee (i.e. $A_F'\neq 0$). Alors le groupe $\G_{E_{\infty}}'$ n'est pas pro-$p$-libre.
\end{corollaire}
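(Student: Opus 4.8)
Le plan est de déduire l'énoncé de la proposition qui précède, laquelle affirme que $\G_{\infty}'$ n'est pas pro-$p$-libre dès que $\WK_2(E)^-\neq 0$. Sous les hypothèses du corollaire ($A_F'\neq 0$ et ramification totale d'au moins une place $p$-adique dans $E_{\infty}/E$), il suffit donc de démontrer la non-trivialité de la partie imaginaire $\WK_2(E)^-$. Je procéderais en deux temps : produire une surjection $\Delta$-équivariante issue de la proposition \ref{prop-surj}, puis identifier la cible de cette surjection à un objet contrôlé par $A_F'$.

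D'abord, comme $p$ est impair on a $\mu_{2p}\subset E$, et l'hypothèse de ramification totale autorise l'application de la proposition \ref{prop-surj} au corps $E$ (avec $i=1$) : on dispose d'une surjection canonique, donc $\Delta$-équivariante,
$$\WK_2(E)/p \rightarrow A_E'/p(1).$$
L'ordre de $\Delta$ étant égal à $2$, premier à $p$, le foncteur \og partie moins \fg{} est exact et l'on peut prendre les composantes imaginaires des deux membres. Le point essentiel est que la torsion de Tate échange les parties plus et moins : la conjugaison complexe agit par $-1$ sur $\Z/p(1)=\mu_p$, de sorte que
$$\left(A_E'/p(1)\right)^- \simeq \left((A_E'/p)^+\right)(1),$$
la torsion préservant la $\F_p$-dimension. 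On est ainsi ramené à établir que $(A_E')^+\neq 0$.

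Ensuite, j'utiliserais le fait classique que, pour $p$ impair et $E/F$ une extension CM de groupe $\Delta$ d'ordre premier à $p$, l'extension des idéaux induit un isomorphisme $A_F'\stackrel{\simeq}{\rightarrow}(A_E')^+$ entre les groupes de $p$-classes $p$-décomposés. Concrètement, les morphismes d'extension $j$ et de norme $N$ vérifient $N\circ j=2$ sur $A_F'$ et $j\circ N=1+c$ sur $A_E'$, donc $j\circ N=2$ sur $(A_E')^+=(A_E')^{\Delta}$ ; comme $2$ est inversible, $j$ et $N$ réalisent des isomorphismes réciproques (à un scalaire près) entre $A_F'$ et $(A_E')^+$. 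L'hypothèse $A_F'\neq 0$ fournit alors $(A_E')^+\neq 0$, d'où $(A_E'/p)^+\neq 0$ par Nakayama, puis $\WK_2(E)^-\neq 0$ grâce à la surjection ci-dessus. La proposition qui précède permet de conclure.

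La difficulté principale résidera dans le suivi soigneux de l'action de $\Delta$ à travers la torsion de Tate, ainsi que dans la vérification de l'identification $A_F'\simeq (A_E')^+$ pour les groupes de $p$-classes \emph{$p$-décomposés} (et non seulement pour les groupes de classes usuels) ; tout le reste se ramène à l'assemblage de résultats déjà établis dans le texte.
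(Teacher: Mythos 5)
Votre preuve est correcte et correspond essentiellement \`a la d\'emarche du papier : celui-ci ne r\'edige pas la preuve du corollaire, mais le pr\'esente comme cons\'equence directe de la proposition pr\'ec\'edente (cas $\WK_{2}(E)^-\neq 0$), la r\'eduction $A_F'\neq 0\Rightarrow \WK_{2}(E)^-\neq 0$ se faisant, comme chez vous, via la proposition \ref{prop-surj} appliqu\'ee \`a $E$, l'\'echange des parties plus et moins sous la torsion de Tate, et l'identification classique $A_F'\simeq (A_E')^{+}$ (licite car $|\Delta|=2$ est premier \`a $p$). Rien \`a redire sur les d\'etails (\'equivariance sous $\Delta$, Nakayama) que vous traitez correctement.
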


La proposition pr\'ec\'edente est imm\'ediate si l'on suppose que $F$ satifait la conjecture de Greenberg en $p$. Cette conjecture postule en effet la trivialit\'e de l'invariant $\lambda_{X_{\infty}'}^+$ (i.e. $\left(X_{\infty}'\right)^+$ est un $\Z_p$-module \textit{fini}).\\
En passant \`a la partie "moins" dans l'isomorphisme $$\WK_{2}(E)/p\simeq (X_{\infty}'\otimes \mu_p)_{\Gamma},$$
on remarque que $\WK_{2}(E)^-\neq 0$ entra\^ine la non nullit\'e $(X_{\infty}')^+$. La conjecture de Greenberg entraine que $(\G_{E_{\infty}}')^{ab}$ contient un sous-groupe fini \textit{non nul}. Ainsi $\G_{E_{\infty}}'$ n'est pas pro-$p$-libre.

\end{document}